\DeclareMathOperator{\wlim}{w-\lim}
\DeclareMathOperator{\diag}{diag}
\newcommand{\downto}{\downarrow}
    \def\beq{\begin{eqnarr}}
    \def\eeq{\end{eqnarr}}
    \def\beqq{\begin{eqnarr*}} 
    \def\eeqq{\end{eqnarr*}} 
\newtheorem {definition}{Definition}
\newtheorem{proposition}[definition]{Proposition}
\newtheorem {lemma}[definition]{Lemma}
\newtheorem{remark}[definition]{Remark}
\newtheorem {theorem}[definition]{Theorem}
\newtheorem {rem}[definition]{Remark}
\newcommand{\cQ}{{\mathcal Q}}
\newcommand{\E}{\mathbb E}
\newcommand{\N}{\mathbb{N}}
\renewcommand{\P}{\mathbb{P}}
\newcommand{\R}{\mathbb{R}}
\newcommand{\eps}{\varepsilon}
\newcommand{\1}{\mathbf{1}}
\title{Real Self-Similar Processes Started from the Origin}
\author{Steffen Dereich}
\address{Steffen Dereich, Institut f\"ur Mathematische Statistik, Westf\"ahlische Wilhelms-Universit\"at M\"unster, Orl\'eans-Ring 10, 48149 M\"unster, Germany}
\author{Leif D\"oring}
\address{Leif D\"oring, Departement Mathematik, ETH Z\"urich, R\"amistrasse 10, 8092 Z\"urich, Switzerland}
\author{Andreas E. Kyprianou}
\address{
Andreas E. Kyprianou, 
Department of Mathematical Sciences,
University of Bath,
Claverton Down,
Bath BA2 7AY,
United Kingdom
}
\begin{document}

\maketitle

\begin{abstract}
	Since the seminal work of Lamperti there is a lot of interest in the understanding of the general structure of self-similar Markov processes. Lamperti gave a representation of positive self-similar Markov processes with initial condition strictly larger than $0$ which subsequently was extended to zero initial condition.\\ For real self-similar Markov processes (rssMps) there is a generalization of Lamperti's representation giving a one-to-one correspondence between Markov additive processes and rssMps with initial condition different from the origin.\\	
	 We develop fluctuation theory for Markov additive processes and use Kuznetsov measures to construct the law of transient real self-similar Markov processes issued from the origin. The construction gives a pathwise representation through two-sided Markov additive processes extending the Lamperti-Kiu representation to the origin.
\end{abstract}		
\tableofcontents

\section{Introduction}
	A fundamental property appearing in probabilistic models is self-similarity, also called the scaling property. In the context of stochastic processes, this is the phenomenon of scaling  time and space in a carefully chosen manner such that there is distributional invariance. An example of the latter is Brownian motion for which the distribution of $(cB_{c^{-1/2}t})_{t\geq 0}$ and $(B_{t})_{t\geq 0}$ is the same for any $c>0$; its so-called scaling index is thus understood as $\frac 1 2$. A natural question is if the knowledge of the scaling property alone implies structural properties for a given model and whether such can be used to deduce non-trivial implications.	 In this article we focus on the case of  Markov processes taking values in $\R$ that fulfil the same scaling relation as a Brownian motion, except the scaling index is taken more generally to be $\alpha>0$ rather than  $\frac{1}{2}$. In particular we focus on entrance laws of such processes from the origin, a problem which, although well understood in the case of Brownian motion, is more difficult to address in the the general setting of real self-similar Markov processes.   \smallskip

Before coming to our main results we review results and ideas for self-similar Markov processes with non-negative sample paths. 
	
\subsection{Positive Self-Similar Markov Processes}	
A strong Markov family $\{\P^z,z> 0\}$ with c\`adl\`ag paths on the state space $[0,\infty)$ - $0$ being an absorbing cemetery state -
 is called \emph{positive self-similar Markov process of index $\alpha>0$} (briefly \emph{pssMp}) if the scaling property holds:
 \begin{align}\label{self_sim}
   \text{the law of $(cZ_{c^{-\alpha}t})_{t\geq 0}$ under $\P^z$ is $\P^{cz}$}
 \end{align}
for all $z, c>0$. Here, and in what follows, $Z=(Z_t)_{t\geq 0}$ denotes the canonical process. The analysis of positive self-similar processes is typically based on the Lamperti representation (see for instance Chapter 13 of \cite{kyprianou}). It  ensures the existence of a L\'evy process $(\xi_t)_{t\geq 0}$, possibly killed at an exponential time with cemetery state $-\infty$, such that, under $\P^z$ for $z>0$,
\begin{align*}
	Z_t=\exp(\xi_{\varphi^{-1}(t)}),\qquad t\geq 0,
\end{align*}
where $\varphi(t)=\int_0^t \exp(\alpha \xi_s)ds$ and the L\'evy process $\xi$  is started in $\log(z)$. We use the convention that $\exp(\xi_{\varphi^{-1}(t)})$ is equal to zero, if $t\not\in \varphi([0,\infty))$.\medskip

It is a consequence of the Lamperti representation that pssMps can be split into two regimes: 
\begin{align*}
 \text{\textbf{{\bf (R)}}}&\qquad\,\,\,\P^z(T_{0} <\infty)=1\text{ for all }z>0\qquad\Longleftrightarrow\qquad\xi \text{ drifts to }-\infty\text{ or is killed} \\
\text{\textbf{{\bf (T)}}}&\qquad\,\,\,\P^z(T_{0}<\infty)=0\text{ for all }z>0\qquad \Longleftrightarrow\qquad\xi\text{ drifts to }+\infty \text{ or oscillates}
\end{align*}

Two major questions remained open after Lamperti:\smallskip

(i) How to extend a  pssMp after hitting $0$ in the  recurrent regime {\bf (R)} with an instantaneous entrance from zero?\smallskip

 (ii) How to start a pssMp from the origin in the transient regime {\bf (T)}? More precisely, one asks for extensions $\{\P^z,z\geq 0\}$ with the Feller property so that in particular  $\P^0:=\wlim_{z\downarrow 0}\P^z$ exists in the Skorokhod topology. \smallskip

Both questions have been solved in recent years:  In the recurrent regime it was proved by Fitzsimmons \cite{Fitzsimmons} and Rivero \cite{Rivero2} that there is a unique recurrent self-similar Markov extension (or equivalently a self-similar excursion measure with summable excursion lengths) that leaves zero continuously if and only if 
\begin{align}\label{C1}
	E[e^{\lambda \xi_1}]=1\text{ for some }0<\lambda<\alpha.
\end{align}	
For the transient regime, it was shown in Chaumont et al. \cite{ChaumontEtAl} and also in Bertoin and Savov \cite{BertoinSavov} that, if the ascending ladder height process of $\xi$ is non-lattice, the weak limit $\P^0$ exists if and only if the weak limit of overshoots
\begin{align}\label{C2}
	 O:=\wlim_{x\uparrow \infty} (\xi_{\tau_x}-x)\quad\text{ exists},
\end{align}
where $\tau_x:=\inf\{t:\xi_t\geq x\}$. If \eqref{C2} holds then one says $\xi$ has stationary overshoots.\smallskip

  There are different ways of proving the results that involve more or less complicated constructions for the underlying L\'evy process $\xi$. The construction that appears to be the most natural to us, in the sense that it works for {\bf (R)} and {\bf (T)}, was carried out for the recurrent regime by Fitzsimmons and shall be developed in this article for the transient regime.\smallskip
  
It has been known for a long time in probabilistic potential theory that excessive measures of Markov processes are closely linked to the entrance behaviour from so called entrance boundaries. One way the relation is implemented involves Markov processes with random birth and death (Kuznetsov measures) and apart from diffusion processes not many examples are known in which the general theory yields concrete results. Self-similar Markov processes form a nice class of non-trivial examples for which the abstract theory gives explicit results. The essense is a combination of Lamperti's representation with Kaspi's theorem on time-changing Kuznetsov measures. Excursions away from $0$ of a pssMp are governed by an excursion measure $n$  corresponding to a particular excessive measure for the pssMp that itself turns out to be a transformation of an invariant measure of $\xi$. Invariant measures for L\'evy processes are known explicitly from the Choquet-D\'eny theorem, hence, excursion measures for pssMp can be identified and constructed through Kuznetsov measures.\smallskip

It is interesting to observe that the constructions of self-similar excursion measures $n$ as Kuznetsov measures work in the recurrent and transient  regimes without using Conditions \eqref{C1} and \eqref{C2} [Note that we also interpret $\P^0$ as a normalized ``excursion measure'' even though an excursion starts at $0$, does not return to $0$ and $\P^0$ must be a probability measure]. The necessity and sufficiency enters as follows:
\begin{itemize}
	\item[\textbf{{\bf (R)}}] Condition \eqref{C1} is necessary and sufficient to construct from $n$ a Markov process by gluing excursions drawn according to a point process of excursions (using Blumenthal's theorem on It\=o's synthesis).
	\item[\textbf{{\bf (T)}}] To define $\P^0$ as normalized ``excursion measure'' the Kuznetsov measure needs to be finite and this is equivalent to Condition \eqref{C2}, see Remark \ref{remar} below.
\end{itemize}
We present our constructions for {\bf (T)} directly in the more general setting of real self-similar Markov processes. 
\begin{remark}
The argument of Fitzsimmons \cite{Fiz} for recurrent extensions extends readily to the real-valued setting by replacing L\'evy processes through MAPs. Since our main purpose is to show how the potential theoretic approach has to be carried out in the transient case and since the article is already technical enough we do not address this topic here.
\end{remark}

\subsection{Real Self-Similar Markov Processes - Main Results} Let $\mathbf D^*$ be the space of c\`{a}dl\`{a}g functions $w:\R_+\to\R$ with $0$ as absorbing cemetery state
endowed with the Skorokhod topology and the corresponding Borel $\sigma$-field $\mathcal D^*$. A family of distributions  $\{\P^z\colon z\in \R\backslash \{0\}\}$ on $(\mathbf D^*, \mathcal D^*)$ is called \emph{strong Markov family} on $\R\backslash \{0\}$ if the canonical process $(Z_t)_{t\geq 0}$ is strong Markov with respect to the canonical right continuous filtration. 
If additionally the process satisfies the  scaling property \eqref{self_sim} for all $z\in\R\backslash \{0\}$ and $c>0$, then the process is  called \emph{real self-similar Markov process}. A result of Chaumont et al. \cite{CPR}, completing earlier work of Kiu \cite{Kiu}, is that for any real self-similar Markov process,  there is a Markov additive process $(\xi_t,J_t)_{t\geq 0}$ on $\R\times \{\pm 1\}$ such that under $\P^z$ the canonical process can be represented as
\begin{align}\label{LK}
   Z_t =  \exp\bigl( \xi_{\varphi^{-1}(t)}\bigr) \,J_{\varphi^{-1}(t)},\quad  t\geq 0,
\end{align}
where $\varphi(t)=\int_0^t \exp(\alpha \xi_s)ds$ and $(\xi_0,J_0)=(\log |z|,[z])$ with
$$
[z]=\begin{cases} 1 &:\mbox{ if } z>0 \\-1 &:\mbox{ if }z<0\end{cases}.
$$
Again we use the convention that  $\exp\bigl( \xi_{\varphi^{-1}(t)}\bigr) \,J_{\varphi^{-1}(t)}$ is equal to zero if $t\not\in \varphi([0,\infty))$.
 A \emph{Markov additive process} (MAP) is a stochastic process $(\xi_t,J_t)_{t\geq 0}$ on $\R\times E$, where $E$ is a finite set, if $(J_t)_{t\geq 0}$ is a continuous time Markov chain on $E$ (called modulating chain) and, for any $i\in E$ and $s,t\geq 0$,
\begin{align*}
	&\text{given }\{J_t=i\},\text{ the pair }(\xi_{t+s}-\xi_t,J_{t+s})_{s\geq 0} \text{ is independent of the past}\\
	&\qquad\text{ and has the same distribution as }(\xi_s, J_s)_{s\geq 0}\text{ under }P^{0,i}.
\end{align*}
If the MAP is killed, then $\xi$ shall be set to $-\infty$.
An important feature of MAPs that will be used throughout our analysis is their close proximity to L\'evy processes. For a textbook treatment of standard results for MAPs see for instance Asmussen \cite{Asmussen}.
\begin{proposition}\label{p1}
	A process $(\xi,J)$ is a MAP if and only if there exist sequences of
	\begin{itemize}
		\item  L\'evy processes $(\xi^{n,i})_{n\in\N_0}$, iid for $i\in E$ fixed,
		\item  real random variables $(\Delta^n_{i,j})_{n\in \N}$, iid for $i,j\in E$ fixed,
	\end{itemize}
  independent of $J$ and of each other such that, if $T_n$ is the $n$th jump-time of $J$, then $\xi$ can be written as
	\begin{align*}
		\xi_t=
		\begin{cases}
			\xi_0+\xi_t^{0,J_0}&: t< T_1\\		
		\xi_{T_n-}+\Delta^n_{J_{T_n-},J_{T_n}}+\xi^{n,{J_{T_n}}}_{t-T_n} 
&: t\in [T_n,T_{n+1}), t<\mathbf k\\
\xi_t=-\infty&:t\geq \mathbf k
 \end{cases},
	\end{align*}
where the killing time $\mathbf k$ is the first time one of the appearing L\'evy processes is killed:
\begin{align*}	
	\mathbf{k} =\inf\Big\{t>0\colon  \exists n\in\N_0, T_n\leq t \text{ such that }\xi^{n,J_{T_n}}\text{ is killed at time }{t-T_n} \Big\}.
\end{align*}
\end{proposition}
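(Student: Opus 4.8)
The plan is to prove the two implications by analysing $(\xi,J)$ relative to the jump times $0=T_0<T_1<T_2<\cdots$ of the modulating chain $J$, writing $q_i$ for the rate at which $J$ leaves state $i\in E$. \emph{For sufficiency}, suppose $\xi$ is built from independent families $(\xi^{n,i})$, $(\Delta^n_{i,j})$ and $J$ as stated. Fix $t\ge0$ and $i\in E$ and argue on $\{J_t=i\}$. Conditioning on $(J_u)_{u\le t}$ fixes the index $m$ with $T_m\le t<T_{m+1}$ and in particular $J_{T_m}=i$, and by the lack of memory of the exponential holding time of $J$ the residual $T_{m+1}-t$ is again exponential of rate $q_i$ while $(J_{t+s})_{s\ge0}$ is a fresh copy of $J$ from $i$, independent of the past; likewise the memorylessness of the exponential killing clock of each $\xi^{n,\cdot}$ makes the residual killing mechanism on $\{t<\mathbf k\}$ (which is forced by $\{J_t=i\}$) independent of the past and again of the same form. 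On the L\'evy side $\xi_{t+s}-\xi_t=\xi^{m,i}_{(t-T_m)+s}-\xi^{m,i}_{t-T_m}$ for $s<T_{m+1}-t$, which by stationarity and independence of the increments of $\xi^{m,i}$ is a fresh copy of $\xi^{m,i}$, independent of $\FF_t$; combining this with the later (independent, suitably i.i.d.) blocks $\xi^{n,\cdot}$ and jumps $\Delta^n_{\cdot,\cdot}$, $n>m$, and the fresh chain, one reads off that $(\xi_{t+s}-\xi_t,J_{t+s})_{s\ge0}\eqd(\xi_s,J_s)_{s\ge0}$ under $P^{0,i}$ and is independent of $\FF_t$, so $(\xi,J)$ is a MAP.

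\emph{For necessity}, let $(\xi,J)$ be a MAP. Spatial homogeneity of the increments in the $\xi$-coordinate gives $P_tC_0\subset C_0$ for the transition semigroup, and right-continuity of the paths gives $P_tf\to f$ as $t\downarrow0$, so $(\xi,J)$ is a Feller process on $\R\times E$ and hence strongly Markov. Applied at the stopping time $T_n$ this yields that on $\{J_{T_n}=i\}$ the shifted process $(\xi_{T_n+u}-\xi_{T_n},J_{T_n+u})_{u\ge0}\eqd(\xi_s,J_s)_{s\ge0}$ under $P^{0,i}$, independently of $\FF_{T_n}$. Iterating, the ``blocks'' consisting of the path $(\xi_{T_n+u}-\xi_{T_n})_{0\le u<T_{n+1}-T_n}$ together with the transition jump $\Delta_{n+1}:=\xi_{T_{n+1}}-\xi_{T_{n+1}-}$ are, conditionally on the skeleton $(J_{T_n})_{n\ge0}$, mutually independent, with the law of the $n$-th one depending only on $J_{T_n}$ for the path part and on $(J_{T_n},J_{T_{n+1}})$ for $\Delta_{n+1}$.

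It then remains to identify the law of one block. Fix $i$, work under $P^{0,i}$, and let $\tau$ be the minimum of $T_1$ and the killing time of $\xi$. The fresh-start property at a fixed time $t$ shows that on $\{\tau>t\}$ the residual $\tau-t$ together with $(\xi_{t+s}-\xi_t)_{0\le s<\tau-t}$ is independent of $\FF_t$ and distributed as $\tau$ and $(\xi_s)_{0\le s<\tau}$; in particular $f_t(\theta):=E^{0,i}[e^{\iu\theta\xi_t}\Ind_{\{\tau>t\}}]$ is measurable in $t$, satisfies $f_{s+t}=f_sf_t$ and tends to $1$ as $t\downarrow0$, whence $f_t(\theta)=e^{t\eta(\theta)}$ for some $\eta$; taking $\theta=0$ shows $\tau$ is exponential with some parameter $c_i\ge q_i$, and $\widehat\mu_t:=e^{c_it}f_t$ is then a weakly continuous convolution semigroup of characteristic functions on $\R$. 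Letting $\zeta^{(i)}$ be the L\'evy process with $\zeta^{(i)}_t\sim\mu_t$ and matching all finite-dimensional distributions of the sojourn path jointly with $\tau$ via the fresh-start property, one obtains that $(\xi_s)_{0\le s<\tau}$ under $P^{0,i}$ agrees in law with $\zeta^{(i)}$ run up to an independent $\mathrm{Exp}(c_i)$ time; splitting that clock into its transition part (rate $q_i$) and its killing part (rate $c_i-q_i$) and invoking the strong Markov structure once more identifies the path part of the blocks based at $i$ as an independent copy of $\xi^{n,i}:=\zeta^{(i)}$ equipped with killing rate $c_i-q_i$, run up to an independent $\mathrm{Exp}(q_i)$ time, and the transition jump as a variable $\Delta^n_{i,j}$ whose law depends only on the transition $i\to j$ and which is independent of everything else. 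The blocks based at the successive visits of $J$ to a fixed state are then i.i.d.\ copies of these objects, independent across states and of $J$; since each block is distributed as a L\'evy process observed up to an independent exponential time it is realised by a genuine such L\'evy process, and reading the bookkeeping backwards yields the stated representation.

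\emph{Main obstacle.} The substantive step is the identification of a single block: one must upgrade the fixed-time fresh-start property of the MAP to the statement that, conditionally on $J$ not making a transition, $\xi$ has stationary independent increments and therefore coincides with a L\'evy process run up to an independent exponential time, and then disentangle the transition clock from the killing clock, extract the transition jump with a law depending only on the states involved and independent of the preceding sojourn, and check that the per-visit blocks glue into honest L\'evy processes that are i.i.d.\ in the visit index and independent of $J$ and of the jump variables. The verification of the Feller property needed for the strong Markov step, and the routine but fiddly bookkeeping around killing, are the remaining technical points.
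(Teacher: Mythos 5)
The paper offers no proof of Proposition \ref{p1}: it is stated as a standard structural fact about MAPs and the reader is referred to Asmussen's book for a textbook treatment, so there is no in-paper argument to compare yours against. Judged on its own terms, your proof follows the canonical route and is sound in outline. The sufficiency direction (memorylessness of the holding and killing clocks plus stationarity and independence of increments of the current L\'evy block) is fine. In the necessity direction, the key steps are all present: Feller, hence strong Markov, property; block decomposition at the jump times $T_n$ of $J$; and the identification of a single sojourn via the multiplicative functional equation for $f_t(\theta)=E^{0,i}[e^{\iu\theta\xi_t}\Ind_{\{\tau>t\}}]$, which correctly yields that the pre-$\tau$ path is a L\'evy process run up to an independent exponential time. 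Two remarks. First, the clock-splitting step deserves one more line than you give it: $P(\kappa>t\mid T_1>t)=P(\tau>t)/P(T_1>t)=e^{-(c_i-q_i)t}$ gives the rate $c_i-q_i$, but the joint independence of the residual killing clock from the path and from $T_1$ needs the fresh-start property applied once more at intermediate times; similarly, the independence of the transition jump $\xi_{T_{n+1}}-\xi_{T_{n+1}-}$ from the preceding sojourn, with law depending only on the pair of states, requires an argument at $\mathcal F_{T_{n+1}-}$ rather than bare assertion. Second, the proposition asks for full independent arrays $(\xi^{n,i})_{n,i}$ and $(\Delta^n_{i,j})_{n,i,j}$ defined for all indices and all time, whereas your construction only produces the blocks actually visited and only on the observation windows $[0,T_{n+1}-T_n)$; the standard fix is to augment the probability space with independent copies for the unused indices and independent continuations beyond each window, which is what your phrase ``reading the bookkeeping backwards'' must mean and should be said explicitly. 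Neither point is a gap in the idea, only in the write-up, and you have correctly flagged them as the technically delicate part.
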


In words, the idea behind a MAP is as follows: There is a time-dependent random environment governed by the state of $J$ and for every state there is a corresponding L\'evy process $\xi^i$ with triplet $(a_i,\sigma^2_i,\Pi_i)$. If $J$ is in state $i$, then $\xi$ evolves according to a copy of $\xi^i$. Once $J$ changes from $i$ to $j$, which happens at rate $q_{i,j}$,  $\xi$ has an additional transitional jump $\Delta_{i,j}$ and until the next jump of $J$, $\xi$ evolves according to a copy of $\xi^j$. The MAP is killed as soon as one of the L\'evy processes is killed.\medskip

  Consequently, the mechanism behind the Lamperti-Kiu representation is simple: $J$ governs the sign of $Z$ and on intervals with constant sign the Lamperti-Kiu representation simplifies to the Lamperti representation. 
  
 \begin{remark}
 The MAP formalism for the Lamperti-Kiu representation does not appear in \cite{CPR} but has been introduced in \cite{Kyp}.
  \end{remark} 
  
  From now on we assume
  \bigskip
\begin{align*}
	\hspace{-8cm} \textbf{{\bf (I)}}\qquad J\text{ is irreducible on $\{\pm1\}$} 
\end{align*}

\bigskip

 that is, neither $1$ nor $-1$ is absorbing. Assumption {{\bf (I)}} involves no loss of generality: If $J$ is not irreducible, then \eqref{LK} implies that the self-similar process changes sign at most once, thus, can be treated as positive (or negative) self-similar process to which the results for pssMps apply.  Note also that {{\bf (I)}} ensures that the modulating chain $J$ has a unique stationary distribution, which we denote by $\pi=(\pi_+,\pi_-)$. In keeping with this notation, we shall also write the off diagonal elements of the transition matrix of $J$ as $q_{+,-}$ and $q_{-,+}$.
 
 \smallskip

We also assume
 \begin{align*}
	\hspace{-8cm} \textbf{{\bf (NL)}}\qquad
	 \xi \text{ is non-lattice} 
\end{align*}

which is a standard assumption to avoid technicalities. The reader is referred to the discussion at the end of Appendix A.3 for some discussion on this assumption.
    
Throughout the article some notation for first hitting times is used: For a real-valued process
\begin{align*}
	T_{\{0\}}=\inf\{t: Z_t=0\}\qquad\text{and}\qquad
	T_{\eps}=\inf\{t: |Z_t|\geq \eps\}
\end{align*}
and for a bi-variate process $(Z^1,Z^2)$	
\begin{align}\label{lala}
\begin{split}
	\tau^-_x=\inf\{t: Z^1_t\leq x\}\qquad\text{and}\qquad
	\tau^+_x=\inf\{t: Z^1_t\geq x\} 
\end{split}	
	\end{align}	
for $x\in \R$.  

\smallskip
 
Analogously to   L\'evy processes one knows that an unkilled MAP $(\xi,J)$ almost surely either drifts to $+\infty$ (i.e. $\lim_{t\uparrow\infty}\xi_t=+\infty$), drifts to $-\infty$ (i.e. $\lim_{t\uparrow\infty}\xi_t=-\infty$) or oscillates (i.e. $\liminf_{t\uparrow\infty}\xi_t=-\infty$ and $\limsup_{t\uparrow\infty}\xi_t=+\infty$). As for pssMps a simple 0-1 law for real self-similar Markov processes can be deduced from the Lamperti-Kiu representation:
 \begin{proposition}
	If $(\xi,J)$ is the Markov additive process corresponding to a real self-similar Markov process through the Lamperti-Kiu representation, then one has the following dichotomy:
	\begin{align*}
 \text{\rm\textbf{{\bf (R)}}}\qquad\,\,\,\P^z(T_{\{0\}}<\infty)=1\text{ for all }z\neq 0\qquad\Longleftrightarrow&\qquad(\xi,J) \text{ drifts to }-\infty \text{ or is killed,} \\
\text{\rm\textbf{{\bf (T)}}}\qquad\,\,\,\P^z(T_{\{0\}}<\infty)=0\text{ for all }z\neq 0\qquad\Longleftrightarrow&\qquad(\xi,J)\text{ drifts to }+\infty \text{ or oscillates}.
\end{align*}
\end{proposition}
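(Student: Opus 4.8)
The plan is to read off from the Lamperti--Kiu representation \eqref{LK} that $Z$ reaches the origin exactly when the Lamperti time change has finite total mass, and then to decide this from the long-run behaviour of $(\xi,J)$. The sign process $J$ plays no role in hitting $0$, and $\varphi$ is continuous, strictly increasing on $[0,\mathbf k)$ and constant on $[\mathbf k,\infty)$ (with $\mathbf k=\infty$ when the MAP is unkilled), while $\exp(\xi_u)\,J_u\neq0$ for every $u<\mathbf k$; hence one checks directly that, under $\P^z$,
\begin{align*}
	T_{\{0\}}=\varphi(\infty)=\int_0^{\mathbf k}\exp(\alpha\xi_s)\,\dd s .
\end{align*}
So the statement reduces to deciding when this exponential functional is almost surely finite. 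By the trichotomy for unkilled MAPs recalled just above, exactly one of four law-determined scenarios occurs: $(\xi,J)$ is killed; or it is unkilled and drifts to $+\infty$; or unkilled and oscillates; or unkilled and drifts to $-\infty$. Because the conclusions of {\bf (R)} and {\bf (T)} are mutually exclusive, it is enough to show that each scenario forces the asserted almost sure value of $\varphi(\infty)$ — finite in the first and last scenarios, infinite in the middle two — simultaneously for every $z\neq0$; the dichotomy, including its $0$--$1$ content, then follows automatically.

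Three of the four scenarios are elementary. If the MAP is killed, then $\mathbf k<\infty$ a.s.\ and, by Proposition~\ref{p1}, the path of $\xi$ on the bounded interval $[0,\mathbf k)$ is a concatenation of finitely many (possibly killed) L\'evy pieces, hence a.s.\ bounded above, so $\varphi(\infty)\le\mathbf k\exp\bigl(\alpha\sup_{s<\mathbf k}\xi_s\bigr)<\infty$ a.s., which is {\bf (R)}. If the MAP is unkilled and drifts to $+\infty$, then a.s.\ $\xi_s\ge0$ for all $s$ beyond some finite random time $T$, so $\varphi(\infty)\ge\int_T^\infty\dd s=\infty$, which is {\bf (T)}. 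If the MAP is unkilled and oscillates, then a.s.\ $\limsup_{t\to\infty}\xi_t=+\infty$; putting $\rho_0=0$ and $\rho_{k+1}=\inf\{s>\rho_k+1:\xi_s\ge1\}$ makes every $\rho_k$ a.s.\ finite, and by the strong Markov property together with spatial homogeneity the process spends, after each $\rho_k$, a strictly positive amount of time above level $1/2$ whose law is bounded below uniformly over the two phases; as the intervals $[\rho_k,\rho_k+1)$ are disjoint, a conditional Borel--Cantelli argument gives $\int_0^\infty\Ind_{\{\xi_s\ge1/2\}}\,\dd s=\infty$ a.s., whence $\varphi(\infty)=\infty$ a.s., which is {\bf (T)}.

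The one substantial case is that $(\xi,J)$ is unkilled and drifts to $-\infty$, where I must establish $\varphi(\infty)<\infty$ a.s. Here I would exploit the renewal structure of the MAP: letting $0=\sigma_0<\sigma_1<\cdots$ be the successive return times of $J$ to its starting phase $J_0$, the Markov additive property makes the blocks $\bigl(\xi_{\sigma_{n+1}}-\xi_{\sigma_n},\ \sigma_{n+1}-\sigma_n\bigr)$ i.i.d., so $(\xi_{\sigma_n})_{n\ge0}$ is a random walk which, being a subsequence of a path tending to $-\infty$ (note $\sigma_n\to\infty$, since the return times have finite mean), itself drifts to $-\infty$, while
\begin{align*}
	\varphi(\infty)=\sum_{n\ge0}\exp(\alpha\xi_{\sigma_n})\,A_n ,\qquad A_n:=\int_{\sigma_n}^{\sigma_{n+1}}\exp\bigl(\alpha(\xi_s-\xi_{\sigma_n})\bigr)\,\dd s ,
\end{align*}
with $(A_n)_{n\ge0}$ i.i.d.\ and each $A_n$ a.s.\ finite and strictly positive (an exponential functional over a single excursion on a compact time interval). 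Almost sure convergence of such a random-walk perpetuity driven by a walk that drifts to $-\infty$ is classical and mirrors exactly the facts used for positive self-similar Markov processes. This is the step I expect to be the main obstacle: when $\xi_{\sigma_n}$ drifts to $-\infty$ only sublinearly (heavy left tail of infinite mean), finiteness of the perpetuity does not follow from the law of large numbers but requires the finer fluctuation-theoretic input, precisely as for exponential functionals of L\'evy processes; the statement and its proof nonetheless run in parallel with the pssMp case. Assembling the four scenarios completes the argument.
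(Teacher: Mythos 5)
Your argument is the standard Lamperti-type reduction---$T_{\{0\}}=\int_0^{\mathbf k}e^{\alpha\xi_s}\,ds$ followed by a case analysis on the long-run behaviour of $(\xi,J)$, with the $\Rightarrow$ directions obtained from exhaustiveness of the four scenarios---which is precisely the approach the paper itself defers to (it offers no proof beyond pointing to the pssMp argument in Chapter 13 of \cite{kyprianou}), and your treatment of the killed, drift-to-$+\infty$ and oscillating cases is correct. The one soft spot is the drift-to-$-\infty$ case: the blanket assertion that a perpetuity $\sum_n e^{\alpha\xi_{\sigma_n}}A_n$ driven by a walk drifting to $-\infty$ converges almost surely is false for general i.i.d.\ nonnegative payments $A_n$ (one needs the Erickson/Goldie--Maller condition relating $\log^+A_1$ to the truncated mean of the negative part of the increment); here it does hold, either because $\log^+A_n$ is controlled by the positive part of the block increment plus $\log\sigma_1$ (the latter having exponential moments), so that Erickson's criterion for the drift to $-\infty$ supplies exactly the required integrability, or, more robustly, by bypassing the skeleton and bounding $\int_0^\infty e^{\alpha\xi_s}\,ds\leq\sum_{k}e^{\alpha(k+1)}O_k$, where $O_k$ is the occupation time of $\{\xi\in[k,k+1)\}$: only finitely many nonnegative levels are ever charged, while for negative levels the potential measure of a MAP drifting to $-\infty$ assigns uniformly bounded expected mass to unit intervals (cf.\ Theorems \ref{bertoin-spitzer} and \ref{MRT}).
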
 
 The proof is very close in spirit to the proof of the analogous result for pssMps (see for instance Chapter 13 of \cite{kyprianou}).\medskip
 
For the rest of this article we assume {\bf (T)} and ask for the existence and a construction of a measure $\P^0$ on the Skorokhod space $(\mathbf D(\R),\mathcal D(\R))$ of c\`{a}dl\`{a}g functions $w:\R_+\to\R$ such that the extension $\{\P^z:z\in\R\}$ of $\{\P^z:z\in\R\backslash \{0\}\}$ is a self-similar Markov family. In other words, the aim is to extend the Lamperti-Kiu representation to transient self-similar Markov processes that do not have zero as a trap.

 smallskip

  Let $\xi^{+}$ and $\xi^-$ be the L\'evy processes and  $\Delta_{+,-}$ and $\Delta_{-,+}$ the random variables appearing in the representation of $(\xi,J)$ from Proposition~\ref{p1} when applied to the two state MAP of the Lamperti-Kiu representation~(\ref{LK}).
  \bigskip
 \begin{itemize}
\item[{\bf {\bf (C)}}]
$\xi_1$ has finite absolute moment and either of the following holds:
\begin{itemize}
\item[(i)] $(\xi,J)$ drifts to $+\infty$
\item[(ii)] $(\xi,J)$ oscillates and
\begin{align*}
\int_{1}^\infty \frac {x \,\Pi([x,\infty))}{1+\int_{0}^x \int_y^\infty \Pi((-\infty,-z])\, dz\, dy}\, dx<\infty,
\end{align*}
where $\Pi$ is the measure
   $$\Pi=\Pi_+ +\Pi_-+q_{+,-}\mathcal L(\Delta_{+,-})+q_{-,+}\mathcal L(\Delta_{-,+})$$ for the L\'evy measure $\Pi_+$ of $\xi^+$ (resp. $\Pi_-$ of $\xi^-$) and the probability distribution $\mathcal L(\Delta_{+,-})$ of $\Delta_{+,-}$ (resp.  $\mathcal L(\Delta_{-,+})$ of $\Delta_{-,+}$).
   \end{itemize}
\end{itemize}

\bigskip

Condition {\bf (C)} shall be called stationary overshoot condition for $(\xi,J)$ as it is the precise condition for the corresponding MAP to have stationary overshoots in the following sense:
\begin{theorem}\label{over}
If  {\bf (NL)} and {\bf (I)} hold, then
\begin{align*}
	\wlim_{a\to+\infty} P^{0,i}&(\xi_{\tau^+_a}-a \in dx, J^+_{{\tau}^+_a} = j)=	\wlim_{a\to+\infty} P^{-a,i}(\xi_{\tau^+_0} \in dx, J^+_{{\tau}^+_0} = j)
\end{align*}
exists independently of $i\in \{\pm 1\}$ and is non-degenerate if and only if Condition {\bf (C)} holds.
\end{theorem}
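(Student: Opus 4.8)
First, the equality of the two displayed limits is immediate from spatial homogeneity of the increments of a MAP: under $P^{-a,i}$ the pair $(\xi,J)$ has the law of $(-a+\xi,J)$ under $P^{0,i}$, so $(\tau^+_0,\xi_{\tau^+_0},J^+_{\tau^+_0})$ under $P^{-a,i}$ equals $(\tau^+_a,\xi_{\tau^+_a}-a,J^+_{\tau^+_a})$ under $P^{0,i}$ in law for every fixed $a$. It therefore suffices to analyse $\lim_{a\to\infty}P^{0,i}(\xi_{\tau^+_a}-a\in dx,\ J^+_{\tau^+_a}=j)$, which I would reduce to a Markov renewal problem for the \emph{ascending ladder MAP}. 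Under the standing assumption {\bf (T)} one has $\limsup_{t\to\infty}\xi_t=+\infty$, so $\tau^+_a<\infty$ a.s.\ for every $a$, and by the fluctuation theory for MAPs the ascending ladder process $(H_t,J^+_t)_{t\ge0}$ is an unkilled Markov additive subordinator: $H$ is non-decreasing with state-dependent drift $\mathtt d_j\ge0$ and L\'evy measure $\Pi^H_j$, it has additional positive jumps at the switching times of $J^+$, and $J^+$ is $J$ sampled at ascending ladder epochs, which by {\bf (I)} is irreducible on $\{\pm1\}$ with unique stationary law $\rho=(\rho_+,\rho_-)$. Since every upward passage of $\xi$ is realised at a ladder epoch, $(\xi_{\tau^+_a}-a,\,J^+_{\tau^+_a})$ coincides in law with the overshoot $(H_{\sigma_a}-a,\,J^+_{\sigma_a})$ of $(H,J^+)$ above level $a$, where $\sigma_a:=\inf\{t:H_t\ge a\}$.

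Next I would apply the key renewal theorem for Markov renewal processes to $(H,J^+)$. Assumption {\bf (NL)} transfers (through the fluctuation identities) to the ladder subordinator $H$, and {\bf (I)} makes $J^+$ ergodic, so $P^{0,i}(H_{\sigma_a}-a\in dx,\ J^+_{\sigma_a}=j)$ converges as $a\to\infty$, uniformly in $i$, to the stationary overshoot law of the ladder MAP; this law is explicit in terms of $\mathtt d$, $\Pi^H$, the switching jumps and $\rho$ (absent modulation it is the classical $\tfrac{\mathtt d}{\mu}\delta_0(dx)+\tfrac1\mu\,\Pi^H((x,\infty))\,dx$), and it has total mass one if and only if the stationary mean growth rate $\mu\in(0,\infty]$ of $H$ --- the $\rho$-weighted sum of the drifts, of the first moments of the $\Pi^H_j$, and of the rates times first moments of the switching jumps --- is finite; when $\mu=\infty$ the overshoot tends to $+\infty$ in probability and the limit is degenerate. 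Thus the theorem reduces to the equivalence $\mu<\infty\iff{\bf (C)}$.

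To prove this equivalence I distinguish the two regimes in {\bf (C)}. If $(\xi,J)$ drifts to $+\infty$ (case (i)), the finite-absolute-moment hypothesis forces the stationary mean of $(\xi,J)$ to be finite and strictly positive and the descending ladder MAP to be killed at a positive rate; differentiating the matrix Wiener--Hopf factorisation of $(\xi,J)$ at $0$ then expresses this stationary mean as a positive multiple of $\mu$ times the killing rate, so $\mu<\infty$. If $(\xi,J)$ oscillates (case (ii)), the finite-absolute-moment hypothesis makes the stationary mean vanish, and I would compute the drift and L\'evy measure of $H$ from $\Pi$ via the MAP analogue of Vigon's \emph{\'equations amicales}, obtaining $\Pi^H_j((x,\infty))$ and the switching-jump contribution as a convolution of the positive part of $\Pi$ with the potential measure $\widehat U$ of the descending ladder MAP. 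Integrating, $\mu<\infty$ becomes equivalent to $\int_1^\infty\Pi([x,\infty))\,\widehat U(x)\,dx<\infty$, and finally the two-sided estimate
\[
\widehat U(x)\;\asymp\;\frac{x}{1+\int_0^x\int_y^\infty\Pi((-\infty,-z])\,dz\,dy}
\]
turns this into the integral in {\bf (C)}(ii). Running the same chain of estimates backwards --- together with the elementary remark that a non-degenerate limit forces $\int_1^\infty\Pi([x,\infty))\,dx<\infty$ (as $\widehat U$ is bounded below on $[1,\infty)$) and hence, by the drift dichotomy, the full finite-absolute-moment condition --- yields the converse. Irreducibility of $J^+$ makes every finiteness statement above independent of the starting state $i$, in accordance with the $i$-independence of the limit.

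The heart of the argument, and what I expect to be the main obstacle, is case (ii): establishing the MAP version of the \'equations amicales for the ascending ladder drift and L\'evy measure, and above all proving the sharp two-sided bound for the descending ladder potential $\widehat U$ displayed above (the L\'evy analogue is a delicate Tauberian estimate for $1/\widehat\kappa$ that uses no regular variation and interacts with the positive-jump tails through a bootstrap). Secondary technical points are: ensuring that {\bf (NL)} really transfers to the ladder subordinator and that the Markov renewal theorem applies in this generality (possibly unbounded ladder increments, a possible atom at $0$ coming from creeping); verifying the matrix Wiener--Hopf identity used in case (i); and confirming that under {\bf (T)} the ascending ladder MAP is genuinely unkilled with $\tau^+_a<\infty$ a.s.
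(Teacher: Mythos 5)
Your first reduction --- the overshoots of $(\xi,J)$ converge weakly to a non-degenerate limit if and only if the ascending ladder height MAP has finite stationary mean $E^{0,\pi}[H^+_1]$, obtained from a Markov renewal theorem for the ladder subordinator --- is exactly the paper's first step (Theorem \ref{MRT}), and the identification of the two displayed limits via spatial homogeneity is likewise the same. Where you genuinely diverge is in proving that $E^{0,\pi}[H^+_1]<\infty$ is equivalent to Condition {\bf (C)}. You propose to stay in continuous time, compute the ladder characteristics of $(H^+,J^+)$ from $\Pi$ via a MAP analogue of Vigon's \'equations amicales, and convert finiteness of the mean into the integral condition of {\bf (C)}(ii) through a two-sided Erickson-type estimate for the descending ladder potential. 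The paper instead discretizes: Lemma \ref{le0809-1} shows that tightness of the MAP's overshoots is equivalent to tightness of the overshoots of the random walk $(\xi_{\sigma_n})$ embedded at the return times of $J$ to a fixed state; the random-walk criterion of Doney and Maller is then quoted (Lemma \ref{le2210-1}), and Lemmas \ref{le0809-2} and \ref{le0809-3} translate condition (TO) for the increment law $\mathcal L(\xi_{\sigma_1})$ into (TO) for $\Pi$, with Theorem \ref{theo1311-1} supplying the moment and drift statements. The payoff of the paper's route is that all of the hard analysis --- precisely the \'equations amicales and the sharp potential bound that you single out as the main obstacle --- is imported wholesale from the random-walk literature rather than re-proved for MAPs; the price is the bookkeeping in Lemmas \ref{le0809-1} and \ref{le0809-3}. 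Your route, if completed, would be more intrinsic and would yield explicit ladder data for the MAP, but as written its two key ingredients (the MAP \'equations amicales and the two-sided bound on the descending potential) are acknowledged rather than established, and these are exactly the steps the paper's discretization is designed to avoid; until they are supplied, the equivalence with {\bf (C)} remains unproved in your argument.
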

Theorem \ref{over} is the MAP version of an important result on the existence of stationary overshoots for L\'evy processes (see for instance Chapter 7 of \cite{kyprianou}) for which  $\Pi$ reduces to the L\'evy measue only. From Theorem \ref{MRT} in the Appendix it follows that stationary overshoots are equivalent to requiring finite mean for the ladder height processes of $(\xi,J)$, the analytic condition is provided in Theorem \ref{theo1509-1}. \medskip

We can now state the main theorem of the present article:

\begin{theorem}\label{thm} 
Suppose $\{\P^z:z\not=0\}$ is a real self-similar Markov process for which the corresponding MAP $(\xi,J)$ satisfies Conditions {\bf (I)} and {\bf (NL)}. Then Condition {\bf (C)} for $(\xi,J)$ is necessary and sufficient 
for the existence of an extension $\{\P^z:z\in\R\}$ on $(\mathbf D(\R),\mathcal D(\R))$ such that the following properties hold:
\begin{enumerate}
\item Under $\P^0$ the process leaves $0$ instantaneously. 
\item The corresponding transition semigroup $(P_t)$ on $\R$ has the Feller property.
\item The family $\{\P^z:z\in\R\}$ is self similar.
\end{enumerate}
Furthermore, $\P^0$ is the unique distribution satisfying one of the properties  (1) or (2).
\end{theorem}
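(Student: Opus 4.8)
The plan is to construct $\P^0$ as a normalised excursion measure obtained by time-changing a Kuznetsov measure, exactly along the scheme sketched in the introduction for the transient regime. First I would produce the relevant self-similar excessive measure for the rssMp. Since the scaling \eqref{self_sim} corresponds under the Lamperti-Kiu map $(x,j)\mapsto e^{x}j$ to spatial translation of $(\xi,J)$, any invariant measure of the MAP $(\xi,J)$ pushes forward to a self-similar excessive measure $m$ for $\{\P^z\}$; the Choquet-D\'eny theory for MAPs developed in the Appendix classifies the invariant measures of $(\xi,J)$, and the one singled out by the drift regime {\bf (T)} -- heuristically the MAP entering from $-\infty$ -- is, up to a possible exponential tilt, $\pi_i\,\dd x$ on $\R\times\{\pm1\}$, whose push-forward is $m(\dd z)\propto|z|^{-1}\pi_{[z]}\,\dd z$. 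Next I would form the Kuznetsov measure $\mathbf Q_m$ of the rssMp associated with $m$: the $\sigma$-finite measure on Skorokhod paths with random birth time $\alpha$ and death time $\beta$, stationary under time shift with one-dimensional marginals $m$, under which the post-$\alpha$ process is Markov with the rssMp semigroup. Existence and uniqueness are standard (Getoor-Glover/Kuznetsov theory), and by Kaspi's theorem on time-changes of Kuznetsov measures $\mathbf Q_m$ is the image under $\varphi$ of the Kuznetsov measure of $(\xi,J)$ built from its invariant measure, which makes the forthcoming computations tractable.

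Then I would disintegrate $\mathbf Q_m$ over the behaviour at the birth time. Because we are in regime {\bf (T)} a path born at $0$, i.e.\ with $Z_{\alpha+}=0$, leaves $0$ instantaneously, never returns, and lives forever; restricting $\mathbf Q_m$ to this event and shifting $\alpha$ to $0$ yields a measure $n$ on $(\mathbf D^*,\mathcal D^*)$ carried by such paths, with entrance law $\eta_t(\dd y)=\mathbf Q_m(\,\cdot\,;\alpha=0,Z_t\in\dd y)$, and under which, after any positive time, $Z$ evolves as the rssMp. The crux of sufficiency is to show $n(\mathbf D^*)<\infty$ if and only if Condition {\bf (C)} holds: through the Lamperti-Kiu change of variables and Kaspi's theorem, $n(\mathbf D^*)$ equals a constant times an exponential functional of $\xi$ under the two-sided MAP conditioned to stay negative (equivalently issued from $-\infty$ and killed on $(0,\infty)$), and the finiteness of that functional is, by Theorem~\ref{over} together with the equivalences ``stationary overshoots $\Leftrightarrow$ finite-mean ladder height $\Leftrightarrow$ {\bf (C)}'' (Theorems~\ref{over}, \ref{MRT}, \ref{theo1509-1}), precisely Condition {\bf (C)}. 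When {\bf (C)} holds I would set $\P^0:=n(\mathbf D^*)^{-1}\,n$; property~(1) is then built in, and self-similarity~(3) follows from the self-similarity of $m$ (equivalently translation invariance in $x$ of the MAP invariant measure), the scaling of $\varphi$, and the scaling of the normalising constant.

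For the Feller property~(2) I would prove $\P^z\Rightarrow\P^0$ in the Skorokhod topology as $z\to0$. First I would establish convergence of the entrance law, $\lim_{z\to0}\P^z(Z_t\in\dd y)=\eta_t(\dd y)$ for each fixed $t>0$: under the Lamperti-Kiu representation this amounts to starting $(\xi,J)$ at $(\log|z|,[z])$ with $\log|z|\to-\infty$, so the clock $\varphi$ runs slowly while $\xi$ is deeply negative and $\varphi^{-1}(t)$ falls where $\xi$ has climbed to around level $0$, whence $Z_t$ is governed by the overshoot of $\xi$ over $0$, whose law and modulating state converge by Theorem~\ref{over}; one checks in addition that $\varphi^{-1}(t)$ converges. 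Convergence of all finite-dimensional distributions of $(Z_t)_{t>0}$ then follows from the Markov property, and tightness in the Skorokhod space on $[\eps,\infty)$ together with a uniform-in-$z$ estimate controlling the approach $t\downarrow0$ (from self-similarity and the same stationary-overshoot bounds: $\P^z$ started near $0$ does not move far in a short time with high probability) upgrades this to weak convergence on $(\mathbf D(\R),\mathcal D(\R))$; any subsequential limit is Markov with the rssMp semigroup and entrance law $(\eta_t)_{t>0}$, hence equals $\P^0$. Feller continuity of $z\mapsto\P^z$ at $z\neq0$ is classical, and continuity of $P_tf$ at $0$ follows from $\P^z\Rightarrow\P^0$.

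Finally, necessity and uniqueness. If an extension with property~(1) exists, applying the strong Markov property at last exits from shrinking neighbourhoods of $0$ (Itô synthesis in the spirit of Blumenthal's theorem, adapted to the non-returning transient case) forces the entrance law, hence $\P^0$, to coincide with the normalisation of the excursion measure $n$ attached to the self-similar excessive measure $m$ (the latter being unique up to scaling by the Choquet-D\'eny classification); in particular $n(\mathbf D^*)<\infty$, so {\bf (C)} holds, and $\P^0$ is unique. If instead an extension with property~(2) exists, then $\P^0=\wlim_{z\to0}\P^z$ is forced; were {\bf (C)} to fail, Theorem~\ref{over} would give non-existence or degeneracy of the stationary overshoot, contradicting convergence of $\P^z(Z_t\in\cdot)$, so again {\bf (C)} holds. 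I expect the main obstacle to be the Feller step: establishing Skorokhod tightness near $t=0$ and identifying all subsequential limits is delicate because the process enters from $0$ singularly and may change sign infinitely often near $0$, and transporting the MAP stationary-overshoot estimates through the nonlinear time change $\varphi$ to get the required uniform-in-$z$ bounds is the technical heart of the argument; a secondary obstacle is the bookkeeping needed to run Kaspi's theorem and the Kuznetsov formalism for a sign-changing (two-sided) process rather than a positive one and to pin down the correct invariant measure from the Choquet-D\'eny analysis.
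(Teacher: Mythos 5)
Your overall strategy is the paper's: build $\P^0$ by applying Kaspi's time-change theorem to a Kuznetsov measure attached to an invariant/excessive measure of the MAP, show finiteness of the resulting entrance measure is equivalent to Condition {\bf (C)} via stationary overshoots (Theorems \ref{over}, \ref{MRT}, \ref{theo1509-1}), normalise, and prove necessity by the divergence of overshoots. However, there is a genuine gap in the oscillating case of regime {\bf (T)}. You take the stationary Kuznetsov measure $\cQ_m$ for the invariant measure $m(\dd x,\{i\})=\pi_i\,\dd x$ and restrict, after the time change, to paths ``born at $0$''. When $(\xi,J)$ drifts to $+\infty$ this works: $\cQ_m$-a.e.\ path has $\lim_{t\to-\infty}Y^1_t=-\infty$, so $B_0=\int_{-\infty}^0 e^{\alpha Y^1_r}\,\dd r<\infty$, Kaspi's theorem applies, and every time-changed path enters from the origin. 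When $(\xi,J)$ oscillates, $\cQ_m$-a.e.\ path oscillates in both time directions, $B_0=\infty$ a.e., Kaspi's hypothesis fails, and the event you restrict to is null. The paper circumvents this by first killing the MAP at $\tau^+_0$ (i.e.\ killing $Z$ at $T_1$), establishing duality between the killed MAP and the dual MAP conditioned to stay negative with respect to the measure $\pi_i\hat U^+_i(x)\,\dd x$ on $(-\infty,0)\times\{\pm1\}$ --- an excessive measure for the \emph{killed} process, not one produced by a Choquet--D\'eny classification of invariant measures of the unkilled MAP --- applying Kaspi there, and then concatenating the normalised law with an independent copy of $\P^{\mu_1}$ at time $T_1$. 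You invoke the conditioned process only in the finiteness computation; the construction itself requires it, so as written your argument covers only case (i) of {\bf (C)}.

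A secondary difference concerns the Feller step. You propose convergence of the fixed-time marginals $\P^z(Z_t\in\cdot)$ plus Skorokhod tightness near $t=0$, and you rightly flag transporting the overshoot estimates through the nonlinear clock $\varphi$ at fixed times as the technical heart. The paper avoids this entirely with a general convergence lemma (Proposition \ref{prop}) phrased in terms of first-passage times over spatial levels: it suffices that $\wlim_{z\to0}\P^z(Z_{T_\eps}\in\cdot)=\mu_\eps$ exists, that $\lim_{\eps\to0}\limsup_{|z|\to0}\E^z[T_\eps]=0$, and that $\P^0$ restarts from $\mu_\eps$ at $T_\eps$; a Skorokhod coupling then gives convergence in the Skorokhod metric. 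Both quantities are computed directly from MAP fluctuation theory (the occupation formula and the Markov renewal theorem), which is precisely the reduction that makes the fixed-time analysis unnecessary.
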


The reader might have realized that Assumption {\bf (I)} excludes the special case of positive self-similar Markov processes that occurs for the trivial case that the Markov chain $J$ is constant and the MAP $(\xi,J)$ reduces to a L\'evy process. In fact, the proof for pssMps is a line-by-line tranlation of the proof given here replacing in all arguments MAPs by L\'evy processes. Since the fluctuation theory for MAPs developed in the Appendix is classical for L\'evy processes, the proof for pssMps only requires the main body of the article which also simplifies drastically in notation.

\subsection{Sketch of the Proof}\label{SectionSketch}
The necessity of Condition {\bf (C)} is straight forward. Combining the Lamperti-Kiu representation and Theorem \ref{theo1509-1} the failure of Condition {\bf (C)} implies
\begin{align}\label{dd}
\begin{split}
	\lim_{|z|\to 0}\P^z(|Z_{T_{\eps}}|<c)
	&=\lim_{|z|\to 0}P^{\log |z|,[z]}\Big(\exp(\xi_{\tau^+_{ \log(\eps)}})<c\Big)\\
	&=\lim_{|z|\to 0}P^{\log |z|,[z]}\Big(\xi_{\tau^+_{ \log(\eps)}}-\log(\eps)<\log (c/\eps)\Big)\\
	&=0
	\end{split}
\end{align}
for any positive $c,\eps$ fixed. Now define 
\begin{align*}
	f(z)=\begin{cases}
		\P^z(|Z_{T_{\eps}}|<c)&:z\neq 0\\
		0&:z=0
		\end{cases},
\end{align*}	
	then, using the calculation from \eqref{dd} and the remark following \eqref{cts} in the Appendix, $f$ is continuous. Hence, for any $\delta>0$ we may choose $a>0$ so that $\sup_{|z|\leq a}f(z)<\delta$. Suppose $\P^0$ is as in Theorem \ref{thm}, then, by the strong Markov property,
	\begin{align*}
		\P^0(|Z_{T_{\eps}}|<c)
		&=\lim_{\eps'\to 0} \int \P^z(|Z_{T_\eps}|<c)\P^0(Z_{T_{\eps'}}\in dz)\\
		&=\lim_{\eps'\to 0}\Big( \int_{|z|\leq a} f(z)\P^0(Z_{T_{\eps'}}\in dz)+ \int_{|z|>a} f(z)\P^0(Z_{T_{\eps'}}\in dz)\Big)\\
		&\leq \delta+ \lim_{\eps'\to 0} \P^0(|Z_{T_{\eps'}}|\in [a,\infty)).
	\end{align*}
	By assumption, under $\P^0$, paths are right-continuous and start from zero so the limiting probability on the right-hand side vanishes. As $\delta$ is arbitrary we proved that $\P^0(|Z_{T_{\eps}}|<c)=0$ for all $\eps, c>0$ which contradicts property (1) of Theorem \ref{thm}.
\smallskip

  The sufficiency of stationary overshoots in Theorem \ref{thm} is non-trivial. Here is the strategy of the proof, the potential theoretic terminology will be clarified in the course of the proof.\medskip

\textbf{Step 1:} Suppose $\{\P^z:z\neq 0\}$ is a Markov family that is continuous in $\R\backslash\{0\}$ with respect to weak convergence on the Skorokhod space - which is true for real self-similar Markov processes due to the Lamperti-Kiu representation - and $\P^0$ is a candidate for the weak limit $\lim_{|z|\downarrow 0}\P^z$. Then a natural guess, for instance from Aldou's criterion, of conditions for the weak convergence is as follows:\smallskip

(a) All overshoots for given levels should converge weakly to the overshoot of $\P^0$ for that level. If so, then nothing has to be controlled past the overshoots due to the strong Markov property and the weak continuity of $z\mapsto \P^z$ away from $0$.\\
(b) The behaviour before the overshoots should be nice in the sense that overshoots over small levels will occur quickly.

To summarize, and this is the content of our Proposition \ref{prop}, to have weak convergence one needs control on overshoots and times of overshoots. For real self-similar Markov processes both quantities can be expressed and analyzed through the Lamperti-Kiu representation and fluctuation theory for Markov additive processes.\smallskip

\textbf{Step 2:} To construct the candidate $\P^0$ assumed in Step 1 we use potential theory: If $\P^0$ is the self-similar process started from zero, then it is a restriction of the Kuznetsov measure $\cQ_\eta$ corresponding to the excessive measure $\eta(dx)=\E^0[\int_0^\infty 1_{(Z_s\in dx)}ds]$, Proposition 3.2 of \cite{FitzMais}. Of course, $\eta$ is not known a priori but this Ansatz leads to a good guess for $\P^0$: $\P^0$ is necessarily the restriction of a Kuznetsov measure for some purely excessive measure.
Since there are many excessive measures of which only one can be the good one, the Ansatz might be too naive.\\ What saves us here is the Lamperti-Kiu representation and Kaspi's time-change theorem: Combined they tell us that the excessive measure should be the Revuz measure of an invariant measure of the MAP. Since invariant measures for MAPs are easy to find, this approach works.\smallskip

Potential theory is most effective when the Markov process is transient. We distinguished two cases in our proof: $(\xi,J)$ drifting to $+\infty$ and $(\xi,J)$ oscillating. In the latter case, the transience is artificially achieved by killing at $T_1$. Such a killing is by no means unnatural: Since only the entrance behavior from $0$ needs clarification, it is equivalent to explain the entrance behavior for the entire process or the process killed at a set bounded away from $0$.

\subsection{Organisation of the Article}	
	The main argument is relatively short but also we need to develop a fair amount of fluctuation theory for Markov additive processes. In order to keep a clear focus the proof is split into two parts: In the next section we give the main argument containing Lamperti-Kiu based calculations for overshoots and times of overshoots (Subsection \ref{S1}) and the potential theoretic construction of $\P^0$ (Subsection \ref{S2}). The fluctuation theory is collected in an Appendix.

\section{Proof}\label{sec:proof}


Throughout the proof, fluctuation theory for Markov additive processes is applied as developed in the Appendix. Unless otherwise stated, we assume throughout that  {\bf (NL)}, {\bf (I)} and {\bf (C)} are in force.
An initial browse of the Appendix at this point may prove to be instructive before digesting the remainder of this section. The main items that are needed from the Appendix is the role of the occupation formula (Theorem \ref{bertoin-spitzer}),  the Markov Renewal Theorem (Theorem \ref{MRT}) and the equivalent conditions for the existence of stationary overshoots (Theorem \ref{theo1509-1}).

\subsection{Convergence Lemma}

The following proposition is the formalization of Step 1 in the sketch of the proof given in Section \ref{SectionSketch}.

\begin{proposition}\label{prop}
	Suppose the following  conditions hold for a strong Markov family $\{\P^z:z\in \R\backslash \{0\}\}$ and a candidate law $\P^0$ on $(\mathbf D(\R), \mathcal D(\R))$:
\begin{enumerate}
\item[(1a)] $\lim_{\eps\to 0} \limsup_{|z| \to 0}\E^z[T_{\eps}]=0$
\item[(1b)] $\wlim_{z\to 0} \P^{z}(Z_{T_{\eps}}\in\cdot)=:\mu_\eps(\cdot)$ exists for all $\eps>0$
\item[(1c)] $\R\backslash\{0\}\ni z \mapsto \P^z$ is continuous in the weak topology on the Skorokhod space
\end{enumerate}
and
\begin{enumerate}
\item[(2a)] $\P^0$-almost surely, $Z_0=0$ and $Z_t\not=0$ for all $t>0$
\item[(2b)] $\P^0((Z_{T_{\eps}+t})_{t\geq 0}\in\cdot)=\P^{\mu_\eps}(\cdot)$ for every $\eps>0$
\end{enumerate}
Then the mapping
$$
	\R\ni z\mapsto \P^z
$$
is continuous in the weak topology on the Skorokhod space.
\end{proposition}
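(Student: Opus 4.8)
The plan is to show that if $z_n \to z$ in $\R$, then $\P^{z_n} \to \P^z$ weakly on the Skorokhod space. When $z \neq 0$ this is immediate from (1c), so the only issue is $z = 0$, i.e. proving $\P^{z_n} \to \P^0$ when $z_n \to 0$. The strategy is to decompose each path into its pre-$T_\eps$ piece and its post-$T_\eps$ piece, control the first piece via the time estimate (1a) and the second via the strong Markov property together with (1b), (1c), (2b), and finally let $\eps \to 0$.

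First I would fix a bounded continuous functional $F$ on the Skorokhod space and a small $\delta > 0$. Using (1a), choose $\eps > 0$ small enough that $\limsup_{|z| \to 0} \E^z[T_\eps] < \delta^2$, so that by Markov's inequality $\P^{z_n}(T_\eps > \delta) < \delta$ for all large $n$; I would also want $\P^0(T_\eps > \delta)$ small, which follows from (2a) (under $\P^0$ the path leaves $0$ instantaneously, so $T_\eps \downarrow 0$ as $\eps \downarrow 0$, $\P^0$-a.s.) — possibly after shrinking $\eps$ further. This makes the initial time segment $[0, T_\eps]$ negligible uniformly. Next, by the strong Markov property applied at $T_\eps$, for $z \neq 0$ one has
\[
\E^z\big[F(Z)\big] = \E^z\Big[ \E^{Z_{T_\eps}}\big[ \widetilde F(\,\cdot\,; Z|_{[0,T_\eps]})\big]\Big],
\]
where $\widetilde F$ glues the stopped path to the shifted continuation; the key point is that, because the first segment is short with high probability, $F$ evaluated on the glued path is close (uniformly) to a functional depending only on the shifted post-$T_\eps$ path $(Z_{T_\eps + t})_{t \geq 0}$.

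The heart of the argument is then the convergence of the post-$T_\eps$ law. Writing $\mathbb{Q}^z_\eps$ for the law of $(Z_{T_\eps + t})_{t \geq 0}$ under $\P^z$, the strong Markov property gives $\mathbb{Q}^z_\eps = \int \P^w(\cdot)\, \P^z(Z_{T_\eps} \in dw) = \P^{\,\P^z(Z_{T_\eps}\in \cdot)}$. By (1b), $\P^{z_n}(Z_{T_\eps} \in \cdot) \Rightarrow \mu_\eps$, and since $\mu_\eps$ charges only $\R \setminus \{0\}$ (overshoots of a positive level are bounded away from $0$), the map $w \mapsto \P^w$ is bounded and continuous on the support, so $\int \P^w \, d(\P^{z_n}(Z_{T_\eps}\in \cdot)) \Rightarrow \int \P^w \, d\mu_\eps = \P^{\mu_\eps}$, which by (2b) equals the law of $(Z_{T_\eps + t})_{t\geq 0}$ under $\P^0$. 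Combining: for all large $n$, $\big|\E^{z_n}[F(Z)] - \E^0[F(Z)]\big|$ is bounded by the two "short initial segment" error terms (each $O(\delta)$ plus an oscillation modulus of $F$ that is small because $T_\eps$ is small) plus $\big|\mathbb{E}_{\mathbb{Q}^{z_n}_\eps}[G] - \mathbb{E}_{\mathbb{Q}^0_\eps}[G]\big|$ for an auxiliary bounded continuous $G$, which vanishes as $n \to \infty$. Letting $n \to \infty$ and then $\delta \to 0$ finishes the proof.

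The main obstacle I anticipate is the gluing/continuity bookkeeping in the Skorokhod topology: one must argue carefully that $F$ applied to the concatenation of a short càdlàg segment with the shifted continuation is uniformly close to $G$ applied to the continuation alone, uniformly over the (random) first segment, using that the first segment occupies a time interval of length $< \delta$ with probability $> 1 - \delta$. This requires either a suitable modulus-of-continuity estimate for $F$ on the Skorokhod space or a reduction to finite-dimensional distributions plus tightness (the latter aligning with the Aldous-criterion heuristic mentioned in the sketch). A secondary technical point is ensuring the limiting overshoot measure $\mu_\eps$ does not put mass at $0$ and that no mass "escapes to infinity," so that $w \mapsto \P^w$ is genuinely a bounded continuous integrand against $\mu_\eps$ — this should follow from the Lamperti-Kiu representation and the fact that $Z_{T_\eps}$ is an overshoot of level $\eps$ in modulus, hence $|Z_{T_\eps}| \geq \eps$.
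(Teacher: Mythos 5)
Your decomposition is exactly the one the paper uses: split the path at $T_\eps$, identify the law of $(Z_{T_\eps+t})_{t\ge0}$ under $\P^{z_n}$ as $\P^{\P^{z_n}(Z_{T_\eps}\in\cdot)}$ via the strong Markov property, pass to the limit there using (1b), (1c) and (2b), control the initial segment using (1a) and (2a), and finally let $\eps\to0$. The one step you leave open --- the ``gluing/continuity bookkeeping'' --- is, however, precisely where the paper's proof does its real work, and your proposed substitute is not sufficient as stated. Shortness of $[0,T_\eps]$ in time with high probability does not by itself make the glued path close to the shifted continuation in the Skorokhod metric: a general bounded continuous functional $F$ has no uniform modulus of continuity, and a short initial time segment can still contribute a large sup-norm discrepancy. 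What makes the estimate work is the \emph{spatial} confinement $|Z_t|<\eps$ for $t<T_\eps$, which you invoke only implicitly (via $|Z_{T_\eps}|\ge\eps$). The paper closes the gap in three moves: (i) it suffices to test against bounded \emph{Lipschitz} functionals of the explicit metric $d=\sum_m 2^{-m}(d_m(x,y)+d_m(y,x))\wedge1$ generating the Skorokhod topology; (ii) the Skorokhod representation theorem couples the processes so that $(Z^n_{T^n_\eps+\cdot})\to(Z^0_{T^0_\eps+\cdot})$ almost surely; (iii) the deterministic inequality
\[
d_m(Z^n,Z^{0})\le 2\eps+|T^n_\eps-T^0_\eps|+d_m\bigl((Z^n_{T^n_\eps+\cdot}),(Z^0_{T^0_\eps+\cdot})\bigr)
\]
then bounds $\bigl|E[f(Z^n)]-E[f(Z^0)]\bigr|$ by $\kappa$ times $4\eps$ plus $2E[|T^n_\eps-T^0_\eps|\wedge1]$ plus an almost surely vanishing term; the $2\eps$ comes from both paths lying in $(-\eps,\eps)$ before their exit times while the time change absorbs the discrepancy of the exit times, and (1a) together with (2a) kills the middle term as $\eps\to0$. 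To complete your write-up you should adopt (i)--(iii); your alternative route via finite-dimensional distributions plus Aldous tightness is in principle viable but would require a separate tightness argument that the metric approach avoids.
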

\begin{proof}
	To show convergence in the Skorokhod topology we work with Prokhorov's metric: for $m\in\N$ and two c\`adl\`ag paths $x,y:\R_+\to \R$ define
	\begin{align*}
		d_m(x,y)=\inf\bigl\{\delta>0: \ & \exists \text{ an increasing continuous function }{S}:[0,m]\to[0,\infty) \text{ with }{S}_0=0,\\
		&  \|{S}-\mathrm{id}\|_{[0,m]}\leq \delta\text{ and }\|x\circ {S}-y\|_{[0,m]}\leq \delta\bigr\}
	\end{align*}
	and set
	$$
		d(x,y)=\sum_{m=1}^\infty 2^{-m} (d_m(x,y)+d_m(y,x))\wedge1.
	$$
	Since $d$ generates the Skorokhod topology it suffices to verify that, for arbitrary bounded Lipschitz functions $f:\mathbf D(\R)\to \R$ with Lipschitz constant $\kappa$, say, one has
\begin{align*}
	\E^{z_n}[f(Z)]\to \E^0[f(Z)]
\end{align*}
	for every sequence $(z_n)\to 0$. By property (1b), $\wlim_{z_n\to 0}\P^{z_n}(Z_{T_{\eps}}\in\cdot)= \mu_\eps(\cdot)$, so that by the continuity property (1c) $$\wlim_{z_n\to 0}\int \P^x(\cdot)\, \P^{z_n}(Z_{T_\eps}\in dx)=\int \P^x(\cdot) \,\mu_\eps(dx)= \P^{\mu_\eps}(\cdot).$$
In combination with the Markov property and property (2b) we get
\begin{align*}
	\wlim_{z_n\to 0}\P^{z_n}\big((Z_{T_{\eps}+\cdot})\in\cdot\big)
	=\wlim_{z_n\to 0}\int \P^x(\cdot)\,\P^{z_n}(Z_{T_\eps}\in dx)
	 = \P^{\mu_\eps}(\cdot)=\P^0\big((Z_{T_{\eps}+\cdot})\in\cdot\big).
\end{align*}
	Using the Skorokhod coupling we can define c\`adl\`ag processes $Z^0,Z^1,Z^2,\dots$ on an appropriate probability space $(\Omega,\mathcal F, P)$ on which
	\begin{itemize}
		\item $\mathcal L(Z^n)=\mathcal L_{z_n}(Z)$ for $n\in\N$ and $\mathcal L(Z^0)=\mathcal L_{0}(Z)$,
		\item $(Z^n_{T^n_{\eps}+\cdot}) \to (Z_{T_{\eps}+\cdot})$, almost surely, in the Skorokhod space.
	\end{itemize}
	For $n\in\{0,1,\dots\}$ we denote by $T^n_{\eps}$ the first entrance time of  $Z^n$ into $(-\eps,\eps)^c$. We note that, for every $m\in\N$ and $n,n'\in\{0,1,\dots\}$,
	\begin{align}\label{s}
		d_m(Z^n,Z^{n'})\leq 2\eps+ |T^n_{\eps}-T^{n'}_{\eps}|+ d_m((Z^n_{T^n_{\eps}+\cdot}) , (Z^{n'}_{T^{n'}_{\eps}+\cdot}))
	\end{align}
	which yields
	$$
		d(Z^n,Z^{0})\leq 4\eps+ 2|T^n_{\eps}-T^0_{\eps}|\wedge 1+ d((Z^n_{T^n_{\eps}+\cdot}) , (Z^0_{T^0_{\eps}+\cdot})).
	$$
	Consequently, using Lipschitz continuity of $f$, we get
	$$
		\big|E[f(Z^n)]-E[f(Z)]\big| \leq \kappa \, E[d(Z^n,Z)]\leq 4\kappa \eps+ 2 \kappa\,E[|T^n_{\eps}-T^0_{\eps}|\wedge1\big]+ \kappa\,E\big[d\big((Z^n_{T^n_{\eps}+\cdot}) , (Z^0_{T^0_{\eps}+\cdot})\big)\big].
	$$
	By dominated convergence this gives
	$$
		\limsup_{n\to\infty}\big|E[f(Z^n)]-E[f(Z)]\big| \leq  4\kappa\eps+ 2\kappa\,\limsup_{n\to\infty}E\big[\big|T^n_{\eps}-T^0_{\eps}\big|\wedge1\big]
	$$
	and letting $\eps\to 0$ yields the result since by property (1a),  $\lim_{\eps\to 0}\limsup_{n\to\infty} E[T^n_{\eps}\wedge 1]=0$ and, using (2a), $\lim_{\eps\to 0}E[T^0_{\eps}\wedge 1]=0$.
\end{proof}

\subsection{Verification of Conditions (1a)-(1c)}\label{S1}
	To verify the first three conditions of Proposition \ref{prop} we use the Lamperti-Kiu representation and fluctuation theory for Markov additive processes.

\begin{lemma}\label{pp}
	Condition (1a) from Proposition \ref{prop} holds. 
\end{lemma}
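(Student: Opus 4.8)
\textbf{Proof proposal for Lemma \ref{pp} (Condition (1a)).}

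The plan is to translate the quantity $\E^z[T_\eps]$ through the Lamperti--Kiu representation into a quantity for the underlying MAP and then exploit the scaling property to reduce everything to a single, fixed computation. Recall from \eqref{LK} that under $\P^z$ one has $Z_t = \exp(\xi_{\varphi^{-1}(t)})J_{\varphi^{-1}(t)}$ with $\varphi(t)=\int_0^t \exp(\alpha\xi_s)\,ds$ and $(\xi_0,J_0)=(\log|z|,[z])$. The hitting time $T_\eps = \inf\{t: |Z_t|\geq \eps\}$ corresponds, under the time change, to the MAP first-passage time $\tau^+_{\log\eps} = \inf\{s: \xi_s \geq \log\eps\}$; precisely, $T_\eps = \varphi(\tau^+_{\log\eps}) = \int_0^{\tau^+_{\log\eps}} \exp(\alpha\xi_s)\,ds$. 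So the first step is to write
\begin{align*}
	\E^z[T_\eps] = \stE^{\log|z|,[z]}\Bigl[\int_0^{\tau^+_{\log\eps}} e^{\alpha\xi_s}\,ds\Bigr].
\end{align*}

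Next I would use the scaling/shift structure of the MAP: shifting the initial point $\log|z|$ and the barrier $\log\eps$ both by $-\log\eps$, and using spatial homogeneity of $\xi$, gives
\begin{align*}
	\E^z[T_\eps] = \eps^{\alpha}\,\stE^{\log(|z|/\eps),[z]}\Bigl[\int_0^{\tau^+_{0}} e^{\alpha\xi_s}\,ds\Bigr].
\end{align*}
Thus, writing $r = |z|/\eps$, it suffices to show that $g_i(r) := \stE^{\log r,\,i}\bigl[\int_0^{\tau^+_0} e^{\alpha\xi_s}\,ds\bigr]$ satisfies $\limsup_{r\downarrow 0} g_i(r) < \infty$ uniformly in $i\in\{\pm1\}$, and then the double limit $\lim_{\eps\to0}\limsup_{|z|\to0}\eps^\alpha g_{[z]}(|z|/\eps)$ is governed by the prefactor $\eps^\alpha\to0$ provided $g_i$ stays bounded as $r\to0$. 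The content of Condition (1a) is therefore entirely the boundedness of $g_i(r)$ as $r\downarrow0$, i.e.\ starting the MAP far below the barrier $0$.

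To bound $g_i(r)$ for small $r$ (equivalently, for the MAP started at a very negative level $-y$ with $y=|\log r|\to\infty$, climbing up to $0$) I would split the integral at the last time the MAP is below some fixed level, say at the successive passages of $\xi$ across the integer levels $-k$, $k=0,1,2,\dots$. On the excursion of $\xi$ between first hitting $-k$ and first hitting $-(k-1)$, the integrand $e^{\alpha\xi_s}$ is at most $e^{-\alpha(k-1)}\sup$(a bounded overshoot factor), and the expected duration of that passage is, by the strong Markov property of the MAP and spatial homogeneity, bounded by a constant $C$ independent of $k$ (here one uses that, under {\bf (C)}, $\xi$ has a well-defined positive drift in the ascending ladder structure, so upward passage times have finite mean — this is exactly the finite-mean ladder-height statement recalled after Theorem \ref{over} and made precise in Theorem \ref{theo1509-1}). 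Summing the geometric series $\sum_{k\geq1} C\,e^{-\alpha(k-1)} < \infty$ gives a bound on $g_i(-y)$ uniform in $y$, hence uniform in $r$; the overshoot factors are controlled using the convergence of overshoots to the stationary overshoot distribution (Theorem \ref{over}) together with the finite-absolute-moment hypothesis in {\bf (C)}, which ensures $\int e^{\alpha x}$ against the overshoot law is finite after the further application of $\varphi$... but here one must be slightly careful, since we only need $e^{\alpha\xi_s}$ integrated over a finite-mean time interval, and the relevant exponential moment is controlled because $\xi$ is confined below the level it is passing. The main obstacle, as I see it, is precisely this uniform control of $\stE[\int_0^{\tau^+_0} e^{\alpha\xi_s}\,ds]$ as the starting point tends to $-\infty$: one needs to combine the finite-mean ladder-height property (which gives linear-in-distance passage times, not enough by itself) with the exponential damping $e^{\alpha\xi_s}\le e^{-\alpha(\text{distance below }0)}$ coming from the integrand, and to make the bookkeeping of overshoots over the intermediate levels rigorous. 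Once that uniform bound $g_i(r)\le M<\infty$ for all small $r$ is in hand, Condition (1a) follows immediately from the $\eps^\alpha$ prefactor.
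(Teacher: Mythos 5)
Your reduction is correct and matches the paper's opening moves: via Lamperti--Kiu and spatial homogeneity, $\E^z[T_\eps]=\eps^\alpha\, E^{\log(|z|/\eps),[z]}\bigl[\int_0^{\tau^+_0}e^{\alpha\xi_s}\,ds\bigr]$, so everything hinges on a bound for $g_i(r)$ that is uniform as $r\downarrow 0$. The genuine gap is in how you propose to obtain that bound. Your level-by-level decomposition rests on the claim that the expected real-time duration of each unit upward passage (from first hitting $-k$ to first hitting $-(k-1)$) is bounded by a constant, ``because upward passage times have finite mean'' under {\bf (C)}. This is false in case (ii) of {\bf (C)}: when $(\xi,J)$ oscillates one has $E^{0,\pi}[\xi_1]=0$ and $E^{x,i}[\tau^+_a]=\infty$ for every $a>x$ (already for Brownian motion $E[\tau^+_1]=\infty$). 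The finite-mean condition of Theorem \ref{theo1509-1} concerns the ascending ladder \emph{height} $H^+$, not the ladder \emph{time} $\bar L^{-1}$; in the oscillating case the latter has infinite mean, so your bound $\int_{\tau_k}^{\tau_{k-1}}e^{\alpha\xi_s}\,ds\le e^{-\alpha(k-1)}(\tau_{k-1}-\tau_k)$ has infinite expectation term by term and the geometric series never gets off the ground. You do flag that the exponential damping must be combined with the passage-time structure (``not enough by itself''), but you never carry out that combination, and that combination is the entire content of the lemma in the oscillating case.

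The paper resolves exactly this point with the occupation formula. After a duality step (converting the upward first-passage problem into a downward one, so that Theorem \ref{bertoin-spitzer} applies), the quantity $\hat E^{x,j}\bigl[\int_0^{\tau^-_0}e^{-\alpha\xi_s}\,ds;\,J_{\tau^-_0}=\cdot\bigr]$ factorizes as a double convolution of the ascending and descending ladder potential measures $U^+$ and $\hat U^+$. One factor, $\int_{[0,\infty)}e^{-\alpha y}\hat U^+_{j,\ell}(dy)$, is finite by the explicit Laplace transform \eqref{LT}; the other, $\int_{[0,x]}e^{-\alpha(x-z)}U^+_{k,\ell}(dz)$, converges as $x\to\infty$ to $\pi_\ell/(\alpha E^{0,\pi}[H^+_1])$ by the Markov renewal theorem (Theorem \ref{MRT}(ii)), and it is only here that the finite-mean ladder-height consequence of {\bf (C)} enters. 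This is how the exponential damping and the occupation structure are combined rigorously; your argument would work as stated only in the drifting case (i), and even there the constant $C$ comes from a Wald-type identity rather than from the ladder-height moment you cite. A secondary, more minor issue: no exponential moment of the stationary overshoot law is needed anywhere, since on $\{s<\tau^+_0\}$ one has $e^{\alpha\xi_s}\le 1$; the overshoot only shifts the starting point of the next passage, which can only help.
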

\begin{proof} Using the Lamperti-Kiu representation~(\ref{LK}), one has
	\begin{align*}
	T_{\eps} =\inf\{t\geq 0:|Z_t|\geq \eps\}
	\stackrel{(d)}{=} \inf\left\{t: \xi_{\varphi^{-1}(t)} \geq \log(\eps)\right\}=\varphi(\tau^+_{\log(\eps)})
	\end{align*}
	with $\tau^+_{\log(\eps)}= \inf\left\{t: \xi_t \geq \log(\eps)\right\}.$ Taking expectations and applying the definition of $\varphi$ yields
	\begin{align*}
		\mathbb{E}^z[T_{\eps}] =E^{\log|z|,[z]}[\varphi(\tau^+_{\log(\eps)})]=
		E^{\log|z|,[z]}\Big[  \int^{\tau^+_{\log(\eps)}}_0 e^{\alpha \xi_s}\,ds \Big].
	\end{align*}
	In order to calculate the right-hand side we use the preparations from the Appendix. Let $\hat{P}$ be the law of the 
	dual MAP introduced in Section A.2. It will be useful below to note that, for example,    for bounded measurable functions $f$,
	\[
	E^{z,i}[f(-\xi_t), J_t = j] = \frac{\pi_j}{\pi_i}\hat{E}^{-z, j}[f(\xi_t), J_t = i], \quad z\in\mathbb{R}, i,j\in\{\pm 1\}, t\geq 0.
	\]
	(Compare for instance \eqref{incrementduality} in the Appendix.) Similarly to L\'evy processes, MAPs are spatially homogeneous in the first variable. Using duality in the second and homogeneity in the third equality gives
	\begin{align*}
	\quad  E^{\log|z|,[z]}\Big[\int^{\tau^+_{\log(\eps)}}_0 e^{\alpha \xi_s}ds \Big]
	&= \sum_{j=\pm1}  E^{\log |z|,[z]}\Big[\int^{\tau^+_{\log(\eps)}}_0 e^{\alpha \xi_s}\,ds ; J_{\tau^+_{\log(\eps)}}=j\Big]\\
	 	 	&= 
	\sum_{j=\pm1} \frac{\pi_j}{\pi_{[z]}} \hat{E}^{-\log|z|,j}\Big[\int_0^{\tau^-_{-\log( \eps)}} e^{-\alpha{\xi}_s}\,ds ; J_{\tau^-_{-\log(\eps)}} =[z] \Big]\\
 	 	&= 
	\sum_{j=\pm1}  \frac{\pi_j}{\pi_{[z]}} \hat{E}^{\log(\eps/|z|),j}\Big[\int_0^{\tau^-_0} e^{-\alpha({\xi}_s-\log(\eps))}\,ds ; J_{\tau^-_0} =[z] \Big] \\
	&= \eps^\alpha \sum_{j=\pm1}
	\frac{\pi_j}{\pi_{[z]}} \hat{E}^{\log(\eps/|z|),j}\Big[\int_0^{\tau^-_0} e^{-\alpha{\xi}_s}\,ds; J_{\tau^-_0} =[z]\Big]\\ 
	&\leq \eps^\alpha \sum_{j,k=\pm1}
	\frac{\pi_j}{\pi_{[z]}} \hat{E}^{\log(\eps/|z|),j}\Big[\int_0^{\tau^-_0} e^{-\alpha{\xi}_s}1_{(J_s=k)}\,ds\Big]. 
	\end{align*}
Appealing to Remark \ref{c=1} and Theorem \ref{bertoin-spitzer} in Appendix A.5, we can put the pieces above together and write 	
\begin{align*}
\mathbb{E}^z[T_{\eps}]
\leq \eps^\alpha \sum_{j,k=\pm1}\frac{\pi_j}{\pi_{[z]}}
 \sum_{\ell=\pm1}
 \int_{[0,\infty)}e^{-\alpha y}\,\hat{U}^+_{j,\ell}(dy) \int_{[0,\log(\eps/|z|) ]} e^{-\alpha (\log(\eps/|z|) -z)}\, {U}^+_{k,\ell}(dz),
\end{align*}
where the measure ${U}^+_{k, \ell}$ (resp. $\hat{U}^+_{j,\ell}$) is the potential measure of the ascending (resp. descending) Markov additive ladder height process of ${\xi}$. The reader is referred to Section A.5 of the Appendix for the precise definition. What is important to note for their use in this proof are the following two facts. First, the integrals $\int_{[0,\infty)}e^{-\alpha y}\hat{U}^+_{j,\ell}(dy)$ 
are all finite; see e.g. formula (\ref{LT}) in Section A.5 of the Appendix. 
Second, the Key Renewal-type theorem given in Theorem \ref{MRT} (ii) of Appendix A.6 ensures that $\lim_{|z|\to 0} \int_{[0,\log(\eps/|z|) ]} e^{-\alpha (\log(\eps/|z|) -z)} {U}^+_{k,\ell}(dz) =\pi_{\ell}/\alpha E^{0,\pi}[H^+_1] $ for each $k,\ell\in\{\pm1\}$, where the exact nature of the expectation  $E^{0,\pi}[H^+_1]\in(0,\infty]$ is again explained in the Appendix. All that we need to know at this point of the argument is that it is finite. This follows from Theorem \ref{theo1509-1} in the Appendix thanks to Condition {\bf (C)}.
In conclusion, we have
		\begin{align*}
		\lim_{\eps \to 0}\lim_{|z|\to 0}\E^z[T_{\eps}] \leq \lim_{\eps\to 0}\frac{\eps^\alpha}{\alpha{E}^{0,\pi}[H^+_1]}\sum_{j,k,\ell=\pm1}\frac{\pi_j\pi_\ell}{\pi_{[z]}}
		\int_{[0,\infty)}e^{-\alpha y}\hat{U}^+_{j,\ell}(dy)= 0,
	\end{align*}
and the proof is complete.
	\end{proof}

	In the next lemma we deduce the overshoot distributions for real self-similar Markov processes from the overshoot distributions of the corresponding Markov additive processes. In particular, we prove that Condition~(1b) is satisfied in the setting of Theorem~\ref{thm}.

\begin{lemma}\label{L5}
\begin{itemize}
	\item[(i)] There are proper weak limits
	\begin{align*}
		\wlim_{|z|\to 0}\P^{z}(Z_{T_{\eps}}\in dy)=\mu_\eps(dy),\quad \eps>0,
	\end{align*}	
	if and only if Condition {\bf (C)} holds.
	\item[(ii)] If Condition {\bf (C)} holds, then $\P^{\mu_\eps}(Z_{T_{\eps'}}\in dy)=\mu_{\eps'}(dy)$ for $0<\eps<\eps'$.

\end{itemize}
\end{lemma}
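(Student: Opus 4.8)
The plan is to translate, via the Lamperti--Kiu representation, the overshoot of the real self-similar process over a level $\eps$ into the overshoot of the underlying MAP $(\xi,J)$ over the level $\log(\eps)$, and then to invoke Theorem~\ref{over} (the MAP analogue of stationary overshoots). Concretely, for $z\neq 0$, the identity used in Lemma~\ref{pp} gives $|Z_{T_\eps}|\stackrel{(d)}{=}\exp(\xi_{\tau^+_{\log(\eps)}})$ under $\P^z$ when $\xi$ starts at $\log|z|$ with $J_0=[z]$, while the sign of $Z_{T_\eps}$ is read off from $J_{\tau^+_{\log(\eps)}}$. Hence
\begin{align*}
\P^z\bigl(Z_{T_\eps}\in dy\bigr)
= P^{\log|z|,[z]}\bigl(\exp(\xi_{\tau^+_{\log(\eps)}})\,J_{\tau^+_{\log(\eps)}}\in dy\bigr).
\end{align*}
Writing $y=\sgn(y)\eps e^{u}$ with $u\geq 0$, this is the law of the pair (overshoot $\xi_{\tau^+_{\log(\eps)}}-\log(\eps)$, terminal phase $J_{\tau^+_{\log(\eps)}}$) started from $\log|z|$. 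By spatial homogeneity of $\xi$ in the first coordinate, starting from $\log|z|$ and looking at the overshoot over $\log(\eps)$ is the same as starting from $\log|z|-\log(\eps)=\log(|z|/\eps)$ and looking at the overshoot over $0$; as $|z|\to 0$ this starting point tends to $-\infty$, which is exactly the regime $\wlim_{a\to+\infty}P^{-a,i}(\xi_{\tau^+_0}\in dx,\,J_{\tau^+_0}=j)$ appearing in Theorem~\ref{over}.

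For part (i): Theorem~\ref{over} states precisely that this weak limit exists, is independent of the initial phase $i=[z]$, and is non-degenerate if and only if Condition {\bf (C)} holds. Transporting back through the continuous bijection $(u,j)\mapsto j\eps e^{u}$ (on $[0,\infty)\times\{\pm1\}$, with image $\{|y|\geq\eps\}$) shows that $\wlim_{|z|\to 0}\P^z(Z_{T_\eps}\in dy)$ exists and defines a probability measure $\mu_\eps$ concentrated on $\{|y|\geq\eps\}$, and that it is proper (non-degenerate) exactly when {\bf (C)} holds; conversely, if {\bf (C)} fails, the computation \eqref{dd} from the sketch already shows mass escapes to $\pm\infty$, so no proper weak limit exists. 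One small point to check is that the map $(u,j)\mapsto j\eps e^{u}$ is a homeomorphism onto its image, so that weak convergence is preserved under push-forward; this is routine.

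For part (ii): assuming {\bf (C)}, apply the strong Markov property at $T_\eps$ to $\P^z$ for $z\neq 0$ to get, for $0<\eps<\eps'$,
\begin{align*}
\P^z\bigl(Z_{T_{\eps'}}\in dy\bigr)=\int \P^w\bigl(Z_{T_{\eps'}}\in dy\bigr)\,\P^z\bigl(Z_{T_\eps}\in dw\bigr).
\end{align*}
Now let $|z|\to 0$. On the left-hand side the limit is $\mu_{\eps'}(dy)$ by part (i). On the right-hand side $\P^z(Z_{T_\eps}\in\cdot)\Rightarrow\mu_\eps$, and $\mu_\eps$ charges only $\{|w|\geq\eps\}$, a region where $w\mapsto\P^w$ is weakly continuous by (1c) (the Lamperti--Kiu representation) and the integrand $w\mapsto\P^w(Z_{T_{\eps'}}\in dy)$ is a bounded weakly continuous family; hence the right-hand integral converges to $\int\P^w(Z_{T_{\eps'}}\in dy)\,\mu_\eps(dw)=\P^{\mu_\eps}(Z_{T_{\eps'}}\in dy)$. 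Equating the two limits yields the claim. The main obstacle is not in (ii), which is a soft strong-Markov plus weak-continuity argument, but in making sure the reduction in (i) hooks cleanly onto Theorem~\ref{over}: one must verify that the limit in Theorem~\ref{over} is genuinely independent of the initial phase (so that the two cases $z>0$ and $z<0$ give the same $\mu_\eps$) and that non-degeneracy there corresponds to $\mu_\eps$ being a proper probability measure rather than a sub-probability with an atom "at infinity"; both are supplied by Theorem~\ref{over} together with the elementary estimate \eqref{dd}.
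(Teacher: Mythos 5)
Your proof follows the paper's argument essentially verbatim: part (i) is the same reduction of the overshoot of $Z$ over the level $\eps$ to the MAP overshoot over $\log(\eps)$ followed by an appeal to Theorem~\ref{over}, and part (ii) is the same strong-Markov-plus-weak-continuity computation. The only place where the paper is slightly more careful is in justifying continuity of $w\mapsto\P^w(Z_{T_{\eps'}}\in A)$, which it derives from the diffuseness of the ladder potential measures via \eqref{cts} rather than from the Skorokhod continuity (1c) alone (the overshoot is not a continuous functional of the path), so you should cite that identity rather than (1c) at that step.
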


\begin{proof}
	(i) The Lamperti-Kiu representation~(\ref{LK}) and spatial homogeneity of Markov additive processes imply that, for $0<a<b$,
	\begin{align*}
		\P^{z}\big(Z_{T_{\eps}}\in[a,b] \big)&= P^{\log |z|,[z]}\big(\exp\bigl( \xi_{\tau^+_{ \log(\eps)}} \bigr)\in[a,b]; J_{\tau^+_{\log(\eps)}}=1\big)\\
		&= P^{\log |z|,[z]}\big( \xi_{\tau^+_{ \log(\eps)}}-\log(\eps) \in[\log (a/\eps),\log(b/\eps)]; J_{\tau^+_{\log(\eps)}}=1\big)
	\end{align*}
	and, analogously,
$$
\P^{z}\big(Z_{T_{\eps}}\in[-b,-a] \big)= P^{\log |z|,[z]}\big(\xi_{\tau^+_{\log {(\eps)}}}-\log (\eps)\in[\log (a/\eps),\log(b/\eps)]; J_{\tau^+_{\log {(\eps/|z|)}}}=-1\big).
$$
Hence, the distributions $\mathcal L^z(Z_{T_\eps})$ converge for $|z|\to0$ if and only if the overshoots of the Markov additive process converge to a proper limit. This is equivalent to Condition {\bf (C)} by Theorem \ref{over}.\smallskip

%
	(ii) We use the strong Markov property and (i) for an interval $A$:
	\begin{align*}
		 \mu_{\eps'}(A)
		=\lim_{|z|\to 0}\P^{z}\big(Z_{T_{\eps'}}\in A\big)
		=\lim_{|z|\to 0} \int \P^x(Z_{T_{\eps '}}\in A)\P^z(Z_{T_\eps}\in dx)
		=\lim_{|z|\to 0} \int f_A(x)\P^z(Z_{T_\eps}\in dx)
		\end{align*}
with $f_A(x):=\P^x(Z_{T_{\eps '}}\in A)$. Using that $f_A$ is bounded and continuous (see \eqref{cts} and the remark beneath it) and the weak convergence from (i) yields
	\begin{align*}
		\mu_{\eps'}(A)= \int f_A(x)\mu_{\eps}(dx)=\P^{\mu_{\eps}}(Z_{T_{\eps '}}\in A).
	\end{align*}
		
as required.
\end{proof}

A direct consequence of the Lamperti-Kiu representation~(\ref{LK}) is

\begin{lemma}
	Condition (1c) from Proposition \ref{prop} holds.
\end{lemma}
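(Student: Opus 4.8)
The plan is to read condition (1c) off directly from the Lamperti--Kiu representation \eqref{LK} together with spatial homogeneity of the underlying MAP. Fix $z_0\in\R\backslash\{0\}$ and a sequence $z_n\to z_0$; for all large $n$ we have $[z_n]=[z_0]$, so by spatial homogeneity of $(\xi,J)$ in its first coordinate all the laws $\P^{z_n}$ and $\P^{z_0}$ can be realised on one probability space from a \emph{single} MAP $(\xi,J)$ started from $(0,[z_0])$. Indeed, writing $\Phi(t)=\int_0^t e^{\alpha\xi_s}\,ds$ (which does not depend on $n$), the computation $\varphi(t)=|z|^\alpha\Phi(t)$ used in the proof of Lemma \ref{pp} shows that under $\P^{z_n}$ the canonical process has the law of
\[
Z^{(n)}_t:=|z_n|\exp\bigl(\xi_{\Phi^{-1}(|z_n|^{-\alpha}t)}\bigr)\,J_{\Phi^{-1}(|z_n|^{-\alpha}t)},\qquad t\geq0,
\]
and likewise for $Z^{(0)}$ with $|z_n|$ replaced by $|z_0|$. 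Setting $c_n:=|z_n|/|z_0|\to1$, this gives the pathwise identity $Z^{(n)}_t=c_n\,Z^{(0)}_{c_n^{-\alpha}t}$; equivalently, $Z^{(n)}$ is the self-similar rescaling \eqref{self_sim} of the single path $Z^{(0)}$ by the factor $c_n$. (Alternatively one could invoke \eqref{self_sim} directly: $\P^{z_n}$ is the push-forward of $\P^{z_0}$ under $w\mapsto(t\mapsto c_nw_{c_n^{-\alpha}t})$, and (1c) then reduces to the same pathwise fact via dominated convergence applied to bounded continuous test functionals.)

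It therefore suffices to prove the pathwise statement: for any c\`adl\`ag $w:\R_+\to\R$, the rescaled path $t\mapsto c_nw_{c_n^{-\alpha}t}$ converges to $w$ in the Skorokhod topology as $c_n\to1$. Here we use the explicit metric $d$ from the proof of Proposition \ref{prop}. In $d_m(c_nw_{c_n^{-\alpha}\,\cdot},\,w)$ we choose the linear reparametrisation $S(t)=c_n^{\alpha}t$, so that $(c_nw_{c_n^{-\alpha}\,\cdot})\circ S=c_nw$ on $[0,m]$, with $\|S-\mathrm{id}\|_{[0,m]}=|c_n^{\alpha}-1|\,m\to0$ and $\|c_nw-w\|_{[0,m]}=|c_n-1|\sup_{[0,m]}|w|\to0$ since $w$ is bounded on $[0,m]$; for the reversed term $d_m(w,\,c_nw_{c_n^{-\alpha}\,\cdot})$ choose $S(t)=c_n^{-\alpha}t$, so $w\circ S=c_n^{-1}(c_nw_{c_n^{-\alpha}\,\cdot})$ on $[0,m]$ with $\|S-\mathrm{id}\|_{[0,m]}=|c_n^{-\alpha}-1|\,m\to0$ and $\|w\circ S-c_nw_{c_n^{-\alpha}\,\cdot}\|_{[0,m]}=|c_n^{-1}-1|\sup_{[0,c_n^{-\alpha}m]}|w|\to0$. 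Summing over $m$ against the dominating series $\sum 2^{-m}$ gives $d(c_nw_{c_n^{-\alpha}\,\cdot},\,w)\to0$.

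Applying this with $w=Z^{(0)}$, which is a.s.\ c\`adl\`ag and hence a.s.\ bounded on compacts, yields $Z^{(n)}\to Z^{(0)}$ almost surely in $(\mathbf D(\R),d)$, and therefore $\P^{z_n}=\mathcal L(Z^{(n)})\to\mathcal L(Z^{(0)})=\P^{z_0}$ weakly. As $z_0\ne0$ and the sequence $z_n\to z_0$ were arbitrary, $z\mapsto\P^z$ is continuous on $\R\backslash\{0\}$ in the weak topology on the Skorokhod space, which is exactly condition (1c). There is no genuine obstacle here beyond bookkeeping; the only points needing a word of care are that the amplitude perturbation $|c_n-1|\sup_{[0,m]}|w|$ is indeed small, which holds because c\`adl\`ag paths are bounded on compacts, and that the linear time change respects the jump structure of $J$ exactly, so that no jump of $Z^{(0)}$ is split or merged under the reparametrisation.
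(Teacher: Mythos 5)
Your proof is correct and follows exactly the route the paper indicates: the paper states this lemma without proof as ``a direct consequence of the Lamperti--Kiu representation \eqref{LK}'', and your argument (realising $\P^{z_n}$ and $\P^{z_0}$ from one MAP, or equivalently invoking the scaling property \eqref{self_sim}, and then checking pathwise Skorokhod convergence of the rescaled paths with explicit linear time changes) simply supplies the omitted details. No gaps.
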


\subsection{Verification of Conditions (2a)-(2b) and Construction of $\P^0$}\label{S2}
In this section we construct the measure $\P^0$ and verify conditions (2a)-(2b) of Proposition~\ref{prop}. Before doing so a brief overview of some notation and results from probabilistic potential theory is given. For a more detailed account the reader is referred to Dellacherie et al. \cite{DMM} (available in French only).\smallskip

\textbf{Notation.} 
We work in the setting of Fitzsimmons and Maisonneuve \cite{FitzsimmonsMaisonneuve} that was also used by Kaspi \cite{Kaspi}. \smallskip

Let $E$ be a locally compact Polish space equipped with its Borel $\sigma$-algebra $\mathcal E$. We extend $E$ by an isolated cemetery state $\partial$ and also equip the extended space $E\cup \{\partial\}$ with  its respective Borel $\sigma$-algebra.  
Let $W$ be the space of functions $w:\R\to E\cup \{\partial\}$ that are $E$-valued and c\`{a}dl\`{a}g on a nonempty interval $(\alpha(w),\beta(w))$ and are equal to $\partial$ on the complement of  $(\alpha(w),\beta(w))$. One calls
$\alpha(w)=\inf\{t:w_t\in E\}$ the time of birth, $\beta(w)=\sup \{t:w_t\in E\}$ the time of death and $\zeta(w):=\beta(w)-\alpha(w)$ the life-time. We denote by $(Y_t(w))_{t\in\R}=(w_t)_{t\in\R}$ the canonical process on $W$ and by $\mathcal G=\sigma(Y_s: s\in \R)$ the canonical $\sigma$-algebra on $W$. We assume that $P=(P_t)_{t\geq 0}$ is the transition semigroup of a Feller process on $E$. A family $(\eta_t)_{t\in\R}$ of measures on $(E, \mathcal E)$ is called an \emph{entrance rule} for $P$ if $\eta_tP_{s-t}\leq \eta_{s}$ for $s>t$, and an \emph{entrance law (at time zero)} if $\eta_t=0$ for $t\leq 0$ and $\eta_tP_{s-t}=\eta_s$ for $s\geq t>0$. In the stationary case where  $\eta_t\equiv m$, $m$ is called  \emph{excessive measure}. Write $\cQ_\eta$ for the Kuznetsov measure corresponding to $(\eta,P)$ and $\cQ_m$ for the stationary case. That is to say, $\cQ_\eta$ is the unique measure on $(W,\mathcal G)$ with one-dimensional marginals $\eta_t$ and transition semigroup $(P_t)$. More precisely
\begin{align*}
	&\quad \cQ_\eta\big(\alpha(Y)<t_1, Y_{t_1}\in d x_1, \cdots ,Y_{t_n}\in dx_n, t_n<\beta(Y)\big)\\
	&=\eta_{t_1}(dx_1)P_{t_2-t_1}(x_1,dx_2)\cdots P_{t_n-t_{n-1}}(x_{n-1},dx_n)
\end{align*}
for $-\infty<t_1<\cdots <t_n<+\infty$. Under a Kuznetsov measure the canonical process is a strong Markov process with random birth and death, i.e. if $\tau$ is a stopping time with respect to the canonical  right continuous filtration $(\mathcal G_t)$ one has 
$$
\cQ_\eta((Y_{\tau +t})_{t\geq 0}\in\,\cdot\,|\mathcal G_\tau)=P^{Y_\tau}(\cdot), \mbox{ \ on } \{\alpha<\tau<\beta\}.
$$
The existence and uniqueness of Kuznetsov measures $\cQ_\eta$ follows from Kuznetsov's work \cite{Kuznetsov}. \smallskip

For the stationary case $\eta_t=m$, a particularly simple construction of Kuznetsov measures was given by Mitro \cite{Mitro} for a Markov process in duality, with respect to $m$, to a second Markov process $(\hat X_t)_{t\geq 0}$ with transition semigroup $(\hat P_t)_{t\geq 0}$, i.e.
\begin{align}\label{dual}
	P_t(x,dy)\,m(dx)=\hat P_t(y,dx)\,m(dy).
\end{align}
In the dual setting $\cQ_m$ 
is the unique measure on $(W, \mathcal G)$ that is translation invariant and  has finite dimensional marginals
\begin{align*}
\cQ(\alpha(Y)<s_l, &Y_{s_l}\in dy_l,\dots, Y_{s_1}\in dy_1, Y_{t_1}\in dx_1,\dots,Y_{t_k}\in dx_k, \beta(Y)>t_k)\\
&=	\int_E m(dx)  \hat P^x[\hat X_{s_1}\in dy_1,\cdots ,\hat X_{s_l}\in dy_l]\, P^x[ X_{t_1}\in dx_1,\cdots  ,X_{t_k}\in dx_k]
\end{align*}
at the times $s_l<\cdots <s_1<0\leq t_1<\cdots <t_k$. 
 In words, to build $\cQ_m|_{\{\alpha<0<\beta\}}$ one samples the invariant measure $m$ at time $0$, and from the outcome starts an independent copy of $X$ to the right and an independent copy of the dual $\hat X$ to the left. An important consequence is that time-reversing the Kuznetsov measure for $(\eta,P)$ yields the Kuznetsov measure for $(\eta, \hat P)$. We should also recall the fact	
\begin{itemize}
	\item $\cQ_m(\alpha=-\infty)=0$ if $m$ is purely excessive (i.e. $m P_t\to 0$ as $t\to\infty$),
	\item $\cQ_m(\alpha>-\infty)=0$ if $m$ is invariant (i.e. $m P_t=m$ for all $t>0$).
\end{itemize}
Later on we will use an entrance law at time zero for the real self-similar Markov process to construct $\cQ_\eta$ - recall that automatically $\alpha=0$ for almost all trajectories - and via $\cQ_\eta$ extend the Markov family $\{\P^z:z\in \R\backslash \{0\}\}$ in the following way:
\begin{lemma}\label{le0109-1}Let $E\cup \{\theta\}$ be a Polish space and let  $\{P^x:x\in E\}$ denote a (killed) Markov family on the space $E$.  Suppose that  $(\eta_t)$ is an entrance law for the Markov family on $E$ for which the corresponding Kuznetsov measure $\cQ_\eta$ fulfills
\begin{itemize}
\item[(i)] $ \cQ_\eta$ is a finite non-trivial  measure
\item[(ii)] $\lim_{t\to 0} Y_t=\theta$, $\cQ_\eta$-a.e., in the space $E\cup\{\theta\}$
\end{itemize}
and define the restriction mapping 
\begin{align*}
\pi: W\to \mathbf D(E\cup \{\theta,\partial\}),\quad \pi(w)_t=
\begin{cases}
	\theta&: t=0\\
	w_t&: t>0
\end{cases}.
\end{align*}
For the normalized measure
\begin{align*}
	 P^\theta(A):= \frac{\cQ_\eta(\pi^{-1}(A) )}{ \cQ_\eta(W)},\quad A\in \mathcal D(E\cup \{\theta\cup \partial\}),
\end{align*}
the extended family $\{P^x:x\in E\cup\{\theta\}\}$ is a (killed) Markov family on  $E\cup\{\theta\}$ so that under $P^\theta$ the canonical  process leaves the initial value $\theta$ instantaneously and satisfies the strong Markov property for  strictly positive stopping times. 
\end{lemma}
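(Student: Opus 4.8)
The plan is to read off all three assertions — the Markov-family structure, instantaneous exit from $\theta$, and the strong Markov property at strictly positive stopping times — directly from the structure of the Kuznetsov measure $\cQ_\eta$. First recall, as noted above for entrance laws at time zero, that $\alpha(Y)=0$ $\cQ_\eta$-almost everywhere; since $W$ only contains paths with $\alpha<\beta$, we also have $0=\alpha(Y)<\beta(Y)$ a.e. Consequently, for $\cQ_\eta$-a.e.\ $w$ the function $t\mapsto w_t$ is $E$-valued and c\`adl\`ag on $(0,\beta(w))$, equal to $\partial$ on $[\beta(w),\infty)$, and by hypothesis (ii) has right limit $\theta$ at $0$; hence $\pi(w)\in\mathbf D(E\cup\{\theta,\partial\})$ for a.e.\ $w$. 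As $\pi$ is measurable and $0<\cQ_\eta(W)<\infty$ by (i), the formula for $P^\theta$ defines a probability measure on the Skorokhod space. Moreover $w_t\in E$, in particular $w_t\neq\theta$, for every $t\in(0,\beta(w))$ and $w_t=\partial$ for $t\geq\beta(w)$, so under $P^\theta$ the canonical process $Z$ satisfies $Z_0=\theta$ and $Z_t\neq\theta$ for all $t>0$ almost surely: this is the claimed instantaneous exit (indeed the process never returns to $\theta$).

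Next I would transfer the strong Markov property. Let $\tau$ be a stopping time of the canonical right-continuous filtration $(\mathcal D_t)$ on $\mathbf D(E\cup\{\theta,\partial\})$ with $\tau>0$ (a.s.). The key bookkeeping point is that for $t>0$ one has $\pi^{-1}(\mathcal D_t)\subseteq\mathcal G_t$ — for such $t$ the path $\pi(w)$ restricted to $[0,t]$ is determined by $(w_s)_{0<s\leq t}$ together with the deterministic value $\theta$ at time $0$, while $Y_s=\partial$ for $s\leq 0$ $\cQ_\eta$-a.e. — so $\tau\circ\pi$ is a $(\mathcal G_t)$-stopping time, $\cQ_\eta$-a.e.\ strictly positive and hence a.e.\ satisfying $\alpha<\tau\circ\pi$; likewise $\pi^{-1}(\mathcal D_\tau)\subseteq\mathcal G_{\tau\circ\pi}$. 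Applying the strong Markov property of $\cQ_\eta$ recalled in the text gives $\cQ_\eta\big((Y_{(\tau\circ\pi)+t})_{t\geq 0}\in\cdot\,\big|\,\mathcal G_{\tau\circ\pi}\big)=P^{Y_{\tau\circ\pi}}(\cdot)$ on $\{\tau\circ\pi<\beta\}$, while on $\{\tau\circ\pi\geq\beta\}$ the shifted trajectory is identically $\partial$, matching the convention that $P^\partial$ is the unit mass on the dead path. Since $Z_s=Y_s$ for $s>0$ and $Z_{\tau+t}(\pi(w))=Y_{(\tau\circ\pi)(w)+t}(w)$ whenever $\tau>0$, transporting this identity through $\pi$ yields $P^\theta\big((Z_{\tau+t})_{t\geq 0}\in\cdot\,\big|\,\mathcal D_\tau\big)=P^{Z_\tau}(\cdot)$ on $\{Z_\tau\in E\}$, which is the asserted strong Markov property (the case $Z_\tau=\partial$ being trivial).

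Finally, for the Markov-family property I would observe that $\theta$ is an isolated point of $E\cup\{\theta\}$, so measurability of $x\mapsto P^x$ on $E\cup\{\theta\}$ is inherited from the given measurability on $E$; the simple Markov property at a deterministic time $t>0$ is the special case $\tau\equiv t$ of the previous paragraph; and the one-dimensional law is $P^\theta(Z_t\in dx)=\eta_t(dx)/\cQ_\eta(W)$ on $E$, with an atom $\cQ_\eta(\beta\leq t)/\cQ_\eta(W)$ at $\partial$, read off from the marginal prescription of $\cQ_\eta$ using $\alpha=0<t$. Combined with the given Markov property on $E$, the entrance-law identity $\eta_tP_{s-t}=\eta_s$ is exactly Chapman--Kolmogorov for the extended semigroup issued from $\theta$, so $\{P^x:x\in E\cup\{\theta\}\}$ is a (killed) Markov family. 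I expect the main obstacle to be the bookkeeping in the second step: identifying the canonical filtration on $\mathbf D(E\cup\{\theta,\partial\})$ with $(\mathcal G_t)$ under $\pi$ and controlling the degenerate event $\{\tau\geq\beta\}$, so that the strong Markov property of $\cQ_\eta$ transfers cleanly; everything else is routine manipulation of Kuznetsov marginals.
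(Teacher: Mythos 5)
Your argument is correct and is exactly the route the paper intends: the paper dispatches this lemma in one line as "an immediate consequence of the strong Markov property of Kuznetsov measures," and your write-up simply fleshes out that transfer (the filtration identification under $\pi$, the fact that $\alpha=0$ a.e.\ for an entrance law at time zero, and the normalization). No discrepancy with the paper's approach.
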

The lemma is an immediate consequence of the strong Markov property of Kuznetsov measures.\medskip

In order to construct a good entrance law at zero for the real self-similar Markov process we use the theory of random time-changes for Kuznetsov measures as developped by Kaspi.\\

\textbf{Random Time-Change.}
Let us recall Theorems (2.3) and (2.10) of Kaspi \cite{Kaspi} in the simplest form: Given a (killed) Markov process on $E$ with transition semigroup $(P_t)$ and a locally bounded measurable function $h:E\cup \{\partial\}\to (0,\infty)$ that defines a time-changed Markov transition semigroup via
\begin{align*}
	\tilde P_t f(x):=E^x[f(Z_{S_t})],\quad \text{where }S_t=\inf\Bigl\{s>0: \int_0^s h(Z_u)\,du >t\Bigr\}.
\end{align*}	

Let $\cQ_m$ be the Kuznetsov measure for $(m, P)$ and suppose $B_t:=\int_{(\alpha,t]} h(Y_s)\,ds<\infty$ for almost all realizations (by time homogeneity of $\cQ_m$ it suffices to check the property only for time $t=0$).  Then there is an entrance law $(\eta_t)$ at time zero for $(\tilde P_t)$ such that the corresponding Kuznetsov measure $\tilde \cQ_\eta$ satisfies
\begin{align*}
	\tilde\cQ_\eta(A, \beta>t)=\cQ_m\big(\pi^{-1}(A), 0<B^{-1}_t\leq 1\big),\quad A\in \mathcal G, t>0,
\end{align*}
where
\begin{align*}
    	\pi(Y)_t=
    	\begin{cases}
    		Y_{B^{-1}_t}&:t>0\\
    		\partial&: t\leq 0
\end{cases}.
\end{align*}



In what follows we fix the MAP $(\xi,J)$ on $\R\times \{\pm1\}$ obtained from the given real self-similar Markov process through the Lamperti-Kiu representation and consider the time-change
	\begin{align}\label{bb}
		\tilde P_t f(x,i):=E^{x,i}\big[f(\xi_{S_t}, J_{S_t})\big],\quad \text{where }S_t=\inf\Bigl\{s>0: \int_0^s e^{\alpha \xi_u}\,du >t\Bigr\}.
	\end{align}	
	We use the knowledge of invariant measures for MAPs to construct an entrance law at zero for $(\tilde P_t)$, thus, through concatenation with $h(x,i)=\exp(x)i$, for the real self-similar Markov process. 

\begin{lemma}\label{ll}
	If $(\xi,J)$ drifts to $+\infty$, then there exists a distribution $\P^0$ on $(\mathbf D(\R),\mathcal D(\R))$ for which Conditions (2a) and (2b) of Proposition \ref{prop} hold.
\end{lemma}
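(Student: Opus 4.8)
The plan is to apply Kaspi's time-change machinery to the stationary (excessive-measure) Kuznetsov measure of $(\xi,J)$ and then pass through Lemma \ref{le0109-1}. First I would identify an invariant measure $m$ for the MAP $(\xi,J)$: by the Choquet--Den\'y/Markov-renewal structure for MAPs, the measure $m(dx,i) = \pi_i\, dx$ (Lebesgue measure in the additive coordinate against the stationary law of $J$) is invariant for the semigroup of $(\xi,J)$, and since $(\xi,J)$ drifts to $+\infty$ it is \emph{purely} excessive for the process killed on entering a half-line bounded away from $-\infty$; this is exactly the transience that makes potential theory effective, as flagged in Step 2 of the sketch. Then I would form the Kuznetsov measure $\cQ_m$ for $(m,P)$ — using the dual description via $\hat P$ recalled above, with the duality relation between $(\xi,J)$ and its dual from Appendix A.2 — and check that $B_0 = \int_{(\alpha,0]} e^{\alpha \xi_s}\,ds < \infty$ $\cQ_m$-a.e., which follows because the integrand decays as $s\downarrow\alpha=-\infty$ along $\cQ_m$-trajectories (the dual run to the left drifts to $-\infty$ in the additive coordinate), so Kaspi's theorem produces an entrance law $(\eta_t)$ at time zero for the time-changed semigroup $(\tilde P_t)$ of \eqref{bb}.

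Next I would concatenate with $h(x,i) = e^{x} i$ to pass from the MAP world to the self-similar world: the time change $S_t$ in \eqref{bb} is precisely $\varphi^{-1}$ from the Lamperti--Kiu representation \eqref{LK}, so the process $(e^{\xi_{S_t}} J_{S_t})_{t>0}$ under the entrance law is a candidate for the self-similar process issued from $0$. I would then define $\P^0$ as the normalized restriction $\cQ_\eta \circ \pi^{-1}/\cQ_\eta(W)$ of the associated Kuznetsov measure, via Lemma \ref{le0109-1}. To invoke that lemma I must verify its two hypotheses: (i) $\cQ_\eta$ is finite and non-trivial, and (ii) $Y_t \to 0$ as $t\downarrow 0$, $\cQ_\eta$-a.e., in $\R$. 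For (i), finiteness of $\cQ_\eta(W) = \cQ_m(0 < B_0^{-1}\le 1)$ reduces, after the time change, to finiteness of an occupation-type integral $\int e^{\alpha x}\,U(dx)$ against the potential/Revuz measure of the ascending ladder height process of $(\xi,J)$; this is finite exactly under the integrability built into Condition {\bf (C)} (cf. the estimates in Lemma \ref{pp} and Theorem \ref{theo1509-1}), and non-triviality is clear since $m\neq 0$. For (ii), $e^{\xi_{S_t}} \to e^{\xi_{\alpha+}} = 0$ because $\xi_{S_t} \to -\infty$ as $t\downarrow 0$ — as $t\downarrow 0$ the original time $B^{-1}_t$ approaches the birth time $\alpha=-\infty$ and $\xi$ runs down to $-\infty$ there along $\cQ_m$-paths.

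Finally I would check properties (2a) and (2b) of Proposition \ref{prop} for this $\P^0$. Property (2a) — that $\P^0$-a.s. $Z_0=0$ and $Z_t\neq 0$ for all $t>0$ — follows from the construction: $Z_0=0$ by definition of $\pi$ in Lemma \ref{le0109-1}, while $Z_t = e^{\xi_{S_t}} J_{S_t}$ never vanishes for $t>0$ since $\xi$ is real-valued (and unkilled, in the drift-to-$+\infty$ case) on $(\alpha,\beta)$, so the Lamperti--Kiu convention assigning value $0$ outside $\varphi([0,\infty))$ is never triggered. Property (2b) — that $\P^0((Z_{T_\eps + t})_{t\ge 0}\in\cdot) = \P^{\mu_\eps}(\cdot)$ — is the strong Markov property of $\cQ_\eta$ (the displayed identity $\cQ_\eta((Y_{\tau+\cdot})\in\cdot\mid\mathcal G_\tau) = P^{Y_\tau}$ on $\{\alpha<\tau<\beta\}$, applied at the stopping time $T_\eps$, which is $\cQ_\eta$-a.e. strictly positive and finite), combined with the identification of the overshoot distribution $\mathcal L^0(Z_{T_\eps})$ with $\mu_\eps$ from Lemma \ref{L5}(i); here one uses that the entrance law is the weak limit of the $\P^z$-overshoot laws, or alternatively re-derives $\mathcal L^0(Z_{T_\eps})=\mu_\eps$ directly from the ladder-height representation of $\cQ_\eta$. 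I expect the main obstacle to be the finiteness part of hypothesis (i) of Lemma \ref{le0109-1}: translating $\cQ_m(0<B_0^{-1}\le 1)<\infty$ into the convergent integral against the ascending-ladder potential measure, and pinning down that this convergence is equivalent to (not merely implied by) Condition {\bf (C)}, requires the full strength of the Markov Renewal Theorem and Theorem \ref{theo1509-1} from the Appendix.
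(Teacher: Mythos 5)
Your proposal follows essentially the same route as the paper: the invariant measure $m(dx,\{i\})=\pi_i\,dx$, Mitro's two-sided dual construction of $\cQ_m$, Kaspi's time change with $B_t=\int_{-\infty}^t e^{\alpha Y^1_r}\,dr$ (with $B_0<\infty$ from the dual drifting to $-\infty$), normalization via Lemma \ref{le0109-1}, and property (2b) from the strong Markov property of the Kuznetsov measure together with the stationary overshoot distribution. The only step you leave sketchy is the finiteness of $\tilde\cQ_\eta(W)$, which the paper carries out by a two-sided comparison of the occupation time of a bounded set $A$ with $\nu(A)>0$ — an upper bound $\int_A e^{\alpha y_1}\,m(dy)<\infty$ from Kaspi's identity and a lower bound $\kappa\,\tilde\cQ_\eta(W)$ from the non-degeneracy of $\nu$ under Condition {\bf (C)} — but you correctly identify this as the crux and point to the right tools.
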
	
\begin{proof}
We construct an entrance law $(\eta_t)$ at time zero for $(\tilde P_t)$ such that the associated Kuznetsov measure $\tilde\cQ_\eta$ satisfies, for $Y=(Y^1,Y^2)$,
\begin{itemize}
\item[(i)] $\lim_{t\downto0} Y^1_t=-\infty$ and $\beta(Y)=\infty$, $\tilde \cQ_\eta$-a.e.
\item[(ii)] $\tilde \cQ_\eta$ is a finite measure
\item[(iii)] if $\tau_z^+=\inf\{t: Y^1_t\geq z\}$ for $z\in\R$ then
$$\tilde \cQ_\eta \big(\big(Y^1_{\tau_z^+}-z,Y^2_{\tau_z^+}\big)\in (dx,\{i\})\big)= \tilde \cQ_\eta(W)\, \nu(dx,\{i\}),$$ where $\nu$ is the stationary overshoot distribution apparing in Theorem~\ref{MRT} for the MAP $(\xi,J)$.
\end{itemize}
If such a measure $\tilde \cQ_\eta$ can be constructed, then by the Lamperti-Kiu representation~(\ref{LK}) and through Lemma~\ref{le0109-1}, we obtain $\P^0$ from $\tilde \cQ_\eta$ by pathwise applying $h(x,i)=\exp(x)i$ and normalizing to a probabillity measure. The claimed properties (2a) and (2b) follow from the construction.\medskip

 Lemma \ref{doubledual} in the Appendix shows that $(\xi,J)$ and $(\hat \xi,\hat J)$ are in duality on $E=\R\times \{\pm1\}$ with respect to the invariant measure $m(dx,\{i\})=dx\, \pi(i)$.  By assumption $(\xi,J)$ drifts to $+\infty$ and the dual $(\hat \xi, \hat J)$ drifts to $-\infty$. We use Mitro's construction for $\cQ_m$: Sample $(x,i)$ from $m$ and start independently copies of $P^{x,i}$ in the positive time-direction and $\hat P^{x,i}$ in the negative time-direction. We conclude that,  $\cQ_m$-a.e., $\alpha(Y)=-\infty$ and  $\beta(Y)=+\infty$ as well as
 \begin{align}\label{d}
	\lim_{t\to -\infty}Y^1_t=-\infty\quad \text{and}\quad \lim_{t\rightarrow +\infty}Y^1_t=+\infty.
\end{align}			
We now apply Kaspi's time-change as discussed above the lemma to $\cQ_m$ with $B_t=\int_{-\infty}^t   \exp(\alpha Y^1_r)\,dr$. In order to use Kaspi's result we need to check that $B_0<\infty$
	for $\cQ_m$-almost all realizations. From the two-sided construction of $\cQ_m$ it is enough to show that $\int_0^\infty \exp(\alpha \xi_r)\,dr<\infty$ for $\hat P^{x,i}$-almost all $(\xi,J)$. This holds due to the law of large numbers for the dual Markov additive process 
that drifts to $-\infty$. Hence, there is an entrance law $(\eta_t)$ at time zero  for $(\tilde P_t)$	and the corresponding Kuznetsov measure $\tilde \cQ_\eta$ satisfies
	\begin{align}\label{aa}
		\tilde\cQ_\eta(A,\beta>t)=\cQ_m\big(\pi^{-1}(A), 0<B^{-1}_t\leq 1\big),\quad A\in\mathcal F,
	\end{align}
		with $\pi(Y)_t=Y_{B^{-1}_t}$ for ${t>0}$ and $\pi(Y)_t=\partial$ for $t\leq 0$.  
Formula  \eqref{aa} combined with \eqref{d} entail property (i).\medskip
	
	Next we show that the measure $\tilde \cQ_\eta$ is finite. We combine convergence of the overshoots of the MAP with Theorem (2.3) of Kaspi.
	By Theorem~\ref{MRT} in the Appendix, there exists a limiting overshoot distribution for the MAP, say $\nu$. We choose $c>0$ such that $\nu((0,c)\times \{\pm1\})>0$ and set $A=(0,c)\times \{\pm1\}$. Note that the map
$$
\R\ni x\mapsto E^{x,i}\Big[\int_0^\infty \mathbf 1_{A}(\xi_{S_t},J_{S_t)})\, dt\Big]
$$
is lower semi-continuous so that by the  Markov property and weak convergence of the overshoot distribution
	$$
	\liminf_{x\downto-\infty} E^{x,i}\Big[\int_0^\infty \mathbf 1_{A}(\xi_{S(t)},J_{S(t)})\, dt\Big] \geq  E^\nu\Big[\int_0^\infty \mathbf 1_{A}(\xi_{S(t)},J_{S(t)})\, dt\Big]=:\kappa>0.
	$$
	Hence, by Fatou's inequality and the strong Markov property for $\tilde \cQ_\eta$, 
	$$
	\tilde \cQ_\eta\Big(\int_{0}^\infty \textbf 1_A(Y_s)\,ds\Big)\geq \liminf_{\eps\downto 0} \tilde \cQ_\eta\left( E^{Y_\eps}\Big[\int_0^\infty \mathbf 1_{A}(\xi_{S(t)},J_{S(t)})\, dt\Big]\right) \geq \kappa\,\tilde\cQ_\eta(W),
	$$
	where we have used that $\lim_{\eps\downto 0}Y^1_\eps=-\infty$ $\tilde\cQ_\eta$-a.e. 
 	Conversely, Theorem (2.3) of Kaspi relates the occupation time of the set $A$ under the measures $\tilde\cQ_\eta$ and $\cQ_m$ as follows:
	\begin{align*}
	\tilde \cQ_\eta\left(\int_{0}^\infty \textbf 1_A(Y^1_s)\,ds\right)= \cQ_m\left (\int_{[0,1)} \textbf 1_A(Y^1_t)\, e^{\alpha Y_t^1}\, dt\right)= \int_A e^{\alpha y_1} \,m(dy)<\infty.
	\end{align*}
	Here we used in the latter step  that we can interchange the order of integration by Fubini's theorem since  $\cQ_m$ is $\sigma$-finite by construction. Combining the two display formulas gives that  $\tilde \cQ_\eta(W)$ is finite and nonzero. Thus we proved property~(ii).

To prove property (iii) we note that the overshoot distribution is not effected by a time change and hence agrees for $(P_t)$ and $(\tilde P_t)$. Consequently, using the Markov property under the measure $\tilde \cQ_\eta$ we get that
	\begin{align*}
 \tilde \cQ_\eta \big(\big(Y^1_{\tau_z^+}-z,Y^2_{\tau_z^+}\big)\in \cdot\,\big)&=\wlim_{k\downto -\infty} \tilde \cQ_\eta\Bigl[  \tilde P^{Y_{\tau^+_{k}}} \bigl((\xi_{\tau_z^+}-z,J_{\tau_z^+})\in \cdot\,\bigr)\Bigr]\\
&=\wlim_{k\downto -\infty} \tilde \cQ_\eta\Bigl[   P^{Y_{\tau^+_{k}}} \bigl((\xi_{\tau_z^+}-z,J_{\tau_z^+})\in \cdot\,\bigr)\Bigr]\\
	& = \tilde \cQ_\eta(W)\, \nu(\cdot).
	\end{align*}
	 This shows (iii) and the proof is complete.
\end{proof}

The same proof can not be carried out if $(\xi,J)$ oscillates. Chosing the same invariant measure $\eta$ leads to a Kuznetsov measure $\cQ_\eta$ under which trajectories oscillate in both directions of time. Hence, there is no way this construction yields a law $\P^0$ satisfying (2a) of Proposition \ref{prop}. Essentially, the problem is that $Z$ is not transient. To circumvent this issue,  $Z$ is killed at $T_{1}$ and then we proceed similarly as before. This is captured in the lemma below.

\begin{rem}\rm
Before turning to the aforesaid lemma, let us note that the cases that $(\xi,J)$ drifts to $+\infty$ or oscillates can of course be treated both with killing as in the proof of Lemma \ref{lll}. In order to work out clearly the main ideas we prefer to give two proofs. In particular, the reader will find it easier to compare our proof to Fitzsimmons' \cite{Fiz} construction of excursion measures in the recurrent case.
\end{rem}

\begin{lemma}\label{lll}
	If $(\xi,J)$ oscillates, then there exists a distribution $\P^0$ on $(\mathbf D(\R),\mathcal D(\R))$ for which Conditions (2a) and (2b) of Proposition \ref{prop} hold.	
\end{lemma}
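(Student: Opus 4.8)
The plan is to run the potential-theoretic construction of Lemma~\ref{ll} a second time, the one new ingredient being the artificial killing that restores transience. Via the Lamperti--Kiu representation~\eqref{LK}, stopping $Z$ at $T_1=\inf\{t:|Z_t|\ge 1\}$ is the same as killing the MAP $(\xi,J)$ at $\tau^+_0=\inf\{t:\xi_t\ge 0\}$, which is finite almost surely because $(\xi,J)$ oscillates; hence the killed MAP, and with it $Z$ killed at $T_1$, is transient. Once a candidate law for $Z$ issued from the origin and killed at $T_1$ has been constructed -- with the position at the killing time distributed as $\mu_1$ from Lemma~\ref{L5} -- the sought law $\P^0$ on $(\mathbf D(\R),\mathcal D(\R))$ is obtained by concatenating it, at the killing time, with the already given family $\{\P^z:z\ne 0\}$; this is legitimate since $\mu_1$ charges only $\{|z|=1\}$. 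Property (2a) then holds because $Z$ is transient and the killed part leaves $0$ instantaneously, while property (2b) follows from the strong Markov property of the killed construction and of $\{\P^z:z\ne 0\}$ together with the consistency relation of Lemma~\ref{L5}(ii).

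To build the candidate for the killed process I would repeat the argument of Lemma~\ref{ll} with $(\xi,J)$ replaced by the killed MAP on the state space $(-\infty,0)\times\{\pm1\}$. The invariant measure $m(dx,\{i\})=dx\,\pi(i)$ of $(\xi,J)$ restricts there to a purely excessive measure for the killed semigroup, and by Lemma~\ref{doubledual} together with the classical fact that killing on exiting an open set preserves duality, the killed MAP remains in duality, with respect to this measure, with the correspondingly killed dual MAP. Running Mitro's two-sided construction and Kaspi's time-change as in Lemma~\ref{ll}, one obtains an entrance law $(\eta_t)$ at time zero for the time-changed killed semigroup whose Kuznetsov measure $\tilde\cQ_\eta$ I would then verify to satisfy: (i) $\lim_{t\downto 0}Y^1_t=-\infty$ and the trajectory is terminated at the first passage of $Y^1$ above $0$; (ii) $\tilde\cQ_\eta$ is a finite, non-trivial measure; (iii) the overshoot of $Y^1$ over any negative level has the stationary overshoot law $\nu$ of $(\xi,J)$ from Theorem~\ref{MRT}. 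For (ii) one recycles the occupation-time comparison of Lemma~\ref{ll}: with $A=(0,c)\times\{\pm1\}$ and $\nu(A)>0$, the lower bound $\tilde\cQ_\eta(\int_0^\infty\1_A(Y_s)\,ds)\ge\kappa\,\tilde\cQ_\eta(W)$ comes from weak convergence of the MAP overshoots and Fatou, whereas the upper bound $\tilde\cQ_\eta(\int_0^\infty\1_A(Y^1_s)\,ds)=\int_A e^{\alpha y_1}\,m(dy)<\infty$ comes from Kaspi's Theorem~(2.3) and Fubini, the restriction of $m$ to $(-\infty,0)$ being what keeps the last integral finite. For (iii) one uses the Markov property under $\tilde\cQ_\eta$ and weak convergence of MAP overshoots as the starting level tends to $-\infty$, exactly as in the final display of the proof of Lemma~\ref{ll}. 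Applying $h(x,i)=\exp(x)\,i$ pathwise and normalizing, via Lemma~\ref{le0109-1}, gives the candidate.

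The main obstacle is the potential theory for the killed MAP: unlike in Lemma~\ref{ll}, the killed semigroup has no invariant measure, so one must work with a purely excessive measure and with a correspondingly killed dual process, and it has to be checked with care that the two-sided Kuznetsov construction followed by Kaspi's time-change still produces trajectories that genuinely enter the self-similar process from the origin -- equivalently, that the $\xi$-coordinate enters from $-\infty$ -- and that the finiteness estimate survives the restriction of $m$ to $(-\infty,0)\times\{\pm1\}$. It is precisely here that Condition~{\bf (C)}, in the form of a finite mean $E^{0,\pi}[H^+_1]<\infty$ for the ascending ladder height process of $(\xi,J)$ (Theorem~\ref{theo1509-1}), becomes indispensable. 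The remaining steps, including the passage back from the killed to the full process via Lemma~\ref{L5}(ii), are routine and parallel the drift case.
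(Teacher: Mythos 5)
Your overall architecture --- kill at $T_1$ (equivalently, kill the MAP at $\tau^+_0$), build the entrance law for the killed process via a two-sided Kuznetsov measure and Kaspi's time change, then concatenate with $\P^{\mu_1}$ --- is exactly the paper's. But there is a genuine gap at the one point you yourself flag as ``the main obstacle'': the choice of the backward process in Mitro's construction. You propose to pair the killed MAP $(\xi^\dag,J^\dag)$ with the \emph{killed dual} MAP, in duality with respect to the restriction of $m(dx,\{i\})=\pi_i\,dx$ to $(-\infty,0)\times\{\pm1\}$. That duality is correct (it is the second identity of Lemma~\ref{doubledual}), but the resulting Kuznetsov measure cannot serve your purpose: the restricted measure is purely excessive, not invariant, so $\cQ_m$-almost every trajectory has a \emph{finite} birth time $\alpha(Y)>-\infty$. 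Concretely, running the killed dual backwards from a sample of $m$, the backward path is itself killed in finite time (the dual MAP also oscillates), and the trajectory is therefore born at a finite position in $(-\infty,0)$. After Kaspi's time change, $\lim_{t\downto 0}Y^1_t$ equals that finite birth position rather than $-\infty$, so the transformed process starts from a nonzero point; your property (i), and hence (2a), fails.

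The missing idea is the paper's: the backward process must be the dual MAP \emph{conditioned to stay negative}, i.e.\ the Doob $h$-transform $\hat P^{\downarrow}$ of the killed dual by the harmonic function $\hat U^+_i(x)$ of Remark~\ref{dualcond}, and the duality measure must correspondingly be reweighted to $m(dx,\{i\})=\pi_i\hat U^+_i(x)\,dx$; a short computation with \eqref{ddd} and Lemma~\ref{doubledual} shows that $(\xi^\dag,J^\dag)$ and $(\hat\xi^\downarrow,\hat J^\downarrow)$ are in duality with respect to this measure. Since $\hat P^{\downarrow}$ drifts to $-\infty$ (Proposition~\ref{cond}), the backward trajectories now live forever and tend to $-\infty$, which is exactly what makes the time-changed process enter from the origin. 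This change also alters your finiteness check for Kaspi's theorem: $B_0<\infty$ must be verified under $\hat P^{x,i,\downarrow}$, which the paper does via the potential identity relating $\hat U^{\downarrow}$ to $\hat U^{\dag}$ through the ratio $\hat U^+_j(y)/\hat U^+_i(x)$ together with the at-most-linear growth of $\hat U^+$ from Theorem~\ref{MRT}. Two smaller points: your test set $A=(0,c)\times\{\pm1\}$ for the finiteness claim lies outside the state space of the killed process and must be replaced by a set in $(-\infty,0)\times\{\pm1\}$ charged by the overshoot law over a negative level; and since $\pi_i\hat U^+_i(x)\,dx$ is not the restriction of an invariant measure, the ``purely excessive measure plus killed dual'' route you sketch has to be abandoned in favour of this genuinely asymmetric two-sided construction.
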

\begin{proof}
	We mimik the proof of Lemma \ref{ll} with additional killing.\smallskip

	Recall from Remark \ref{dualcond} in the Appendix that there exists a harmonic function $(x,i)\mapsto U^+_i(x)$  related to the MAP killed  when its first component reaches the positive half-line, henceforth denoted by  $(\xi^\dag,J^\dag)$. The corresponding $h$-transformed process is indicated with the superscript $\downarrow$. We shall also write their respective transition kernels as 
$P^\dag_t((x,i),(dy,\{j\}))$ and
$P^\downarrow_t((x,i),(dy,\{j\}))$, with the addition of a hat to mean the dual map as defined in Section A.2.\\
 Next, we show duality in the sense of \eqref{dual} for $(\hat\xi^\downarrow, \hat J^\downarrow)$ and $(\xi^\dag, J^\dag)$ with respect to the duality measure $m(dx,\{i\})=\pi_i\hat U^+_i(x) dx$ on $(-\infty,0)\times \{\pm 1\}$. The duality comes from the short calculation
	\begin{align*}
		 \hat P^\downarrow_t\big((x,i),(dy,\{j\})\big)m(dx,\{i\})
		&= \frac{\hat U^+_j(y)}{\hat U^+_i(x)}\hat P^\dag_t\big((x,i),(dy,\{j\})\big)  \pi_i\hat U^+_i(x) dx\\
		&=\pi_j \hat U^+_j(y)P^\dag_t\big((y,j),(dx,\{i\})\big) dy \\
		&= P^\dag_t\big((y,j),(dx,\{i\})\big) m(dy,\{j\}),
	\end{align*}
where we used the generic $h$-transform formula for semigroups	\begin{align}\label{ddd}
		P^h_t(x,dy)=\frac{h(y)}{h(x)}P_t(x,dy)
	\end{align}
	for transition probabilities of $h$-transformed processes and the ordinary MAP duality formula
\begin{align*}
\hat P^\dag_t\big((x,i),(dy,\{j\})\big) \pi_idx= P^\dag_t\big((y,j),(dx,\{i\})\big)\pi_jdy
\end{align*}
from Lemma \ref{doubledual} in the Appendix.

\smallskip
	
	Mitro's construction of the Kuznetsov measure $\cQ^\dag_m$ for the killed MAP with respect to $m$ works as follows: Sample $(x,i)\in (-\infty,0)\times \{\pm 1\}$ according to $m$ at time zero and start independently a copy of the killed process $P^{x,i,\dag}$ in positive time-direction and a copy of the conditioned process $\hat P^{x,i,\downarrow}$ in negative time-direction. Since the MAP was assumed to oscillate, the killing time of the former is finite almost surely. Furthermore, the conditioned process drifts to $-\infty$ almost surely by Proposition \ref{cond} in the Appendix. Hence, almost all trajectories $Y=(Y^1,Y^2)$ under $\cQ^\dag_m$ are born at time $\alpha(Y)=-\infty$, die at a finite time $\beta(Y)<+\infty$ and satisfy $\lim_{t\downarrow -\infty}Y^1_t=-\infty$.\smallskip
		
We now apply Kaspi's time-change to $\cQ^\dag_m$ with $B_t=\int_{-\infty}^t   \exp(\alpha Y^1_r)dr$. In order to use Kaspi's result we need to check that $B_0<\infty$ for $\cQ^\dag_m$-almost all realizations. From the two-sided construction of $\cQ^\dag_m$ it is clearly enough to show that $\hat P^{x,i,\downarrow}$-almost surely $\int_0^\infty  \exp(\alpha \xi_r)\,dr<\infty$ for all $(x,i)\in (-\infty,0)\times\{\pm 1\}$. To do so we show finiteness of the expectation:
\begin{align}
		\hat E^{x,i,\downarrow}\Big[\int_0^\infty e^{\alpha \xi_s}\,ds\Big]&=
		\int_0^\infty \hat E^{x,i,\downarrow}\big[e^{\alpha \xi_s}\big]\,ds\notag\\
		&=\int_0^\infty \sum_{j=1,2}\int_\R e^{\alpha y} \hat P^{x,i,\downarrow}(\xi_s\in dy, J_s=j)\,ds\notag\\
		&= \sum_{j=1,2}\int_\R e^{\alpha y}\int_0^\infty  \hat P^{x,i,\downarrow}(\xi_s\in dy,J_s=j)ds\notag\\
		&=:\sum_{j=1,2}\int_\R e^{\alpha y} \hat U^\downarrow\big((x,i),dy,\{j\}\big)\notag\\
		&= \frac{1}{\hat U^+_i(x)}\sum_{j=1,2}\int_0^\infty e^{\alpha y}\hat U^+_j(y)\hat U^\dag\big((x,i),(dy,\{j\})\big)\notag\\
		&\leq \frac{C}{\hat U^+_i(x)}\sum_{j=1,2}\int_0^\infty e^{2 \alpha  y} \hat U^\dag\big((x,i),(dy,\{j\})\big)\notag\\
		&=\frac{C}{\hat U^+_i(x)}\hat E^{x,i,\dag}\Big[\int_0^\infty e^{2 \alpha  \xi_s}\,ds\Big]\notag\\
		&=\frac{C}{\hat U^+_i(x)}\hat E^{x,i}\Big[\int_0^{\tau^+_{0}} e^{2 \alpha  \xi_s}\,ds\Big],
		\label{isfinite}
	\end{align}
	where we used Fubini's theorem and the relation
	\begin{align*}
		\hat U^\downarrow\big((x,i),(dy,\{j\})\big)=\frac{\hat U^+_j(y)}{\hat U^+_i(x)}\hat U^\dag\big((x,i),(dy,\{j\})\big),
	\end{align*}
	with $\hat U^\dag\big((x,i),(dy,\{j\})\big)$ being 
the potential measure of $(\xi^\dagger, J^\dagger)$, (a consequence of \eqref{ddd}) and that the potentials $y\mapsto \hat U^+_j(y)$ grow at most linearly (see Theorem \ref{MRT} of the Appendix).
The right-hand side of (\ref{isfinite}) was already shown to be finite in the proof of Lemma \ref{pp}.\smallskip

Theorems (2.3) and (2.10) of Kaspi \cite{Kaspi}  thus gives us an entrance law $(\eta_t)$ at zero and a corresponding Kuznetsov measure $\tilde \cQ^\dag_\eta$ for the time-changed killed process 
	\begin{align}\label{bbb}
		\tilde P^\dag_t f(x,i):=E^{x,i,\dag}\big[f(\xi_{S(t)}, J_{S(t)})\big], \quad \text{with  }\,\, S_t=\inf\Big\{s>0: \int_0^s \exp(\alpha \xi_u)du >t\Big\},
	\end{align}	
	and furthermore
	\begin{align}\label{a}
		\tilde \cQ^\dag_\eta(A, \beta>t)=\cQ^\dag_m\big(\pi^{-1}(A), 0<B^{-1}_t\leq 1\big),\quad A\in\mathcal F,
	\end{align}
	with $\pi(Y)_t=Y_{B^{-1}_t}$. As in the previous proof, \eqref{a} and the almost sure behavior under $\cQ_m^\dag$ imply the following claim:

	\textbf{Claim:} $\tilde\cQ_\eta^\dag$-almost all trajectories satisfy $\lim_{t\downarrow 0} Y^1_t=-\infty$ and $\beta(Y)<+\infty$.\smallskip

	\textbf{Claim:} $\tilde \cQ_\eta^\dag(W)<\infty$\smallskip
	
	The proof is exactly as in the proof of Lemma \ref{ll}.\smallskip

\textbf{Claim:} $\cQ^\dag_\eta \big(\big(Y^1_{\tau_z^+}-z,Y^2_{\tau_z^+}\big)\in (dx,\{i\})\big)= \cQ_\eta(W)\, \nu(dx,\{i\})$ for all $z<0$.\smallskip
	
	The proof is exactly as in the proof of Lemma \ref{ll} using only $z<0$.\smallskip
	
	Normalizing $\tilde\cQ_\eta^\dag$ to a probability measure and concatenating pathwise with $h(x,i)=\exp(x)i$ yields a law $\P^{0,\dag}$ which is a Kuznetsov measure for the transition semigroup $(P^\dag_t)$ killed at $T_1$. The overshoot distribution under $\P^{0,\dag}$ at levels $\eps<1$ have distributions $\mu_\eps$ (see the proof of Lemma \ref{L5}). Concatenating $\P^{0,\dag}$ with an independent copy of $\P^{\mu_1}$, i.e. running a trajectory under $\P^{0,\dag}$ until $T_1$ and then continuing with an independent copy of $\P^{\mu_1}$, yields $\P^0$. From the above and Lemma \ref{L5}, $\P^0$ has the claimed properties.
\end{proof}

\subsection{Proof of Theorem~\ref{thm}} 
The argument for the necessity of Condition {\bf (C)} was given in Section \ref{SectionSketch}.\medskip

Now suppose {\bf (C)} holds and let $\P^0$ as in Lemma~\ref{ll} or Lemma \ref{lll}, respectively. Then property (1) of Theorem~\ref{thm} is satisfied  and the canonical process under $\P^0$ is strongly Markov for strictly positive stopping times as it is a Kuznetsov measure. In particular, properties (2a) and (2b) of Proposition~\ref{prop} are true. As shown in Section~\ref{S1}, properties (1a) to (1c) are also fulfilled, thus, 
$$
\wlim_{|z|\to0}\P^z=\P^0.
$$
We will use these properties to conclude the remaining assertions of Theorem~\ref{thm}.
 \smallskip

{\bf Step 1:} We show that  the extension $\{\P^z:z\in\R\}$ is Feller. First we show that for arbitrary $t>0$ and continuous and bounded  functions $f:\R\to\R$ the semigroup $P_tf(x)=\E^x[f(X_t)]$ is continuous on $\R$. Suppose that the sequence $(x_n)_{n\in\N}$ converges to $x\in\R$. We know already that $\wlim_{n\to\infty} \P^{x_n}= \P^x$ on the Skorokhod space and it follows that
$$
P_tf(x_n)=\E^{x_n}[f(Z_t)] \to \E^{x}[f(Z_t)]=P_tf(x),
$$
once we ensured that under $\P^x$ the canonical process $Z$ is almost surely continuous in $t$ since point evaluations on the Skorokhod space are continuous on the set of functions being continuous in the respective point. To show this we recall that the paths of real self-similar Markov processes are quasi-left-continuous because the same is true of MAPs, in particular, when they are time changed by the sequence of stopping times that appear in the Lamperti-Kiu transform. In particular, this means that  $Z$ is continuous in $t$, almost surely,  under $\P^x$ if $x\not=0$. In the case where $x=0$ we use the Markov property, to conclude that
$$
\P^0(Z\text{ has jump at }t)=\E^0[\P^{Z_{t/2}}(Z\text{ has jump at }t/2)]=0.
$$
Next, we show that if additionally $f$ vanishes at infinity, then this is also the case for $P_tf$. This is a consequence of the fact that for every $C>0$
$$
\lim_{|x|\to\infty} \P^x(\min_{s\in[0,t]} |Z_s|<C)=0
$$
which itself follows easily from the Lamperti-Kiu representation. Indeed, this estimate  implies that
\begin{align}\label{eq1310-1}
|P_t f(x)|\leq \max_{y:|y|\geq C} |f(y)| + \P^x(\min_{s\in[0,t]} |Z_s|<C)\, \max_{y\in\R} |f(y)|\to \max_{y:|y|\geq C} |f(y)|
\end{align}
for $|x|\to\infty$. Thus, $P_tf$ is vanishing at infinity since $C>0$ is arbitrary. \smallskip

It remains to show the strong continuity for a continuous function $f:\R\to\R$ vanishing at infinity. Let $(t_n)$ be a decreasing sequence with $t_n\to0$ and $(x_n)$ a sequence in $\R$ with either $|x_n|\to\infty$ or $x_n\to x$ for an $x\in\R$. In the case where $|x_n|\to\infty$, with the same estimate as in~(\ref{eq1310-1}), we find
$$
|P_{t_n} f(x_n)-f(x_n)|\leq |P_{t_n}f(x_n)|+|f(x_n)| \to 0.
$$
Moreover, if $x_n\to x$, we get that
$$
P_{t_n} f(x_n)=\E^{x_n}[f(Z_{t_n})]\to \E^x[f(Z_0)]=f(x)
$$
since the functional
$$
\mathbf D(\R)\times [0,\infty)\ni (w,t)\mapsto w_t\in\R
$$
is continuous in $\P^x\otimes \delta_0$-almost all entries. Consequently, one has
$$
\lim_{t\downarrow0} \sup_{x\in\R} |P_tf(x)-f(x)|=0,
$$
since we could otherwise construct sequences $(t_n)$ and $(x_n)$ as above contradicting the above properties (based on the compactness of the one point compactification of $\R$).\smallskip

{\bf Step 2:} Next we show that $\P^0$ is self similar.
For a continuous and bounded functional $f:\mathbf D(\R)\to \R$ we have
$$
\E^0[f(cZ_{c^{-\alpha}\cdot})]=\lim_{z\to0} \E^z[f(cZ_{c^{-\alpha}\cdot})]=\lim_{z\to0} \E^{cz}[f(Z)]= \E^{0}[f(Z)].
$$

{\bf Step 3:} Finally, we show that $\P^0$ is the unique Markovian extension satisfying one of the properties (1) or (2).  Suppose there exists another  Markovian extension satisfying property (1) in the statement of the theorem and denote it by   $\bar \P^0$. Then, for $t>0$,
$$
\bar\P^0 (Z_t\in \,\cdot\,)= \wlim_{\eps\downto0} \bar \P^0(Z_{t+\eps}\in \,\cdot \,) = \wlim_{\eps\downto0} \bar \P^0(\P^{Z_\eps}(Z_{t}\in \,\cdot \,)) = \P^0(Z_t\in\,\cdot\,),
$$
where we used in the first step that  $(Z_t)$ is right-continuous, in the second step the Markov property of $\bar \P^0$ and in the third step  that $Z_{\eps}\Rightarrow \delta_0$ under $\bar \P^0$ and $\wlim_{z\to0} \P^z(Z_t\in\,\cdot\,)= \P^0(Z_t\in\,\cdot\,)$ by the Feller property for $\P^0$.
By using the Markov property one easily sees that the distributions $\bar \P^0$ and $\P^0$ coincide.  

Suppose now that, instead, that $\bar \P^0$ satisfies the Feller property (2) instead of (1). Then using the Feller property twice we get
$$
\bar\P^0 (Z_t\in \,\cdot\,)= \wlim_{x\to 0} \P^x(Z_{t}\in \,\cdot \,) =  \P^0(Z_t\in\,\cdot\,)
$$ 
so that $\bar \P^0$ and $\P^0$ coincide again by the Markov property.

\subsection{Remarks on the Proof}

\begin{rem}\rm
	The way the limiting law $\P^0$ is constructed one can say that the Lamperti-Kiu representation extends in a slightly unhandy way to initial condition $0$. Due to the explicit construction of the Kuznetsov measure from two-sided MAPs one can for instance deduce from almost sure results for MAPs almost sure results for self-similar Markov processes started from zero.
\end{rem}

\begin{rem}[Proof of Theorem \ref{thm} fails if {\bf (C)} fails]\label{remar}\rm
Calculations similar to those from Lemma \ref{ll} (resp.  Lemma \ref{lll}) can be used in order to show that the divergence of overshoots implies $\tilde \cQ_\eta(W)=\infty$ (resp. $\tilde \cQ_\eta^\dag(W)=\infty$). Hence, if Condition {\bf (C)} fails, then necessarily $\tilde\cQ_\eta$ (resp. $\tilde \cQ_\eta^\dag$) is an infinite measure and as such cannot be normalized to a probability measure $\P^0$.
\end{rem}

\begin{rem}\rm\label{ee}
	The previous remark has an interesting consequence: in contrast to other known constructions of $\P^0$ in the setting of pssMps, our construction works irrespectively of Condition {\bf (C)}. When {\bf (C)} fails, then the infinite Kuznetsov measure can still be used to study conditional limits, such as $\lim_{|x|\to 0} \P^z(\cdot\,|\,\text{the interval }[a,b]\text{ is hit})$.
	
\end{rem}

\begin{rem}[Relation to Bertoin, Savov \cite{BertoinSavov}]\rm
	For pssMps Bertoin and Savov constructed $\P^0$  by hand without appealing to the probabilistic potential theory centred around Kuznetsov's measure. Their construction is in the spirit of the Fitzsimmons and Taksar \cite{F} construction of stationary regenerative sets as range of stationary subordinators. In essence, we first constructed a Kuznetsov measure and then produced the so-called quasi-process by taking Palm measures in \eqref{aa} (resp. in \eqref{a}). Bertoin and Savov directly wrote down the quasi-process and their construction only works under Condition {\bf (C)}.
	\end{rem}
	\begin{rem}\rm
The advantage of going the detour through Kuznetsov measures is mostly of technical nature.
It allowed us to write down, with a minimal use of fluctuation theory, the limiting object $\P^0$. For instance, there was no need to use the non-trivial existence of $\hat P^\downarrow$ issued from the origin. Since fluctuation theory is delicate a proof with minimal use is desirable, in particular, for possible futur generalizations to more general domains. One direction for which our construction works but fluctuation theory is not available are multi-self-similar Markov processes introduced in Yor, Jacobson \cite{YJ}.

\end{rem}

\begin{rem}\rm
	For real self-similar Markov processes with jumps only towards the origin a construction of $\P_0$ was already given in \cite{Leif} through jump-type stochastic differential equations. That approach lacks the full generality since the weak uniqueness argument does not extend. It might be an interesting question to ask if the potential theory of the present article can be used to prove the weak uniqueness of the differential equations.
\end{rem}

\appendix

\section{Results for Markov additive processes}

Unlike the case of L\'evy processes, general fluctuation theory for Markov additive processes (MAPs) appears to be relatively incomplete in the literature. Accordingly, in this Appendix, we address those parts of the fluctuation theory that are needed in the main body of the text above. 
\smallskip

The contents of the Appendix is as follows:
\begin{itemize}
	\item[A.1] Basics 
	\item[A.2] Duality
	\item[A.3] Local time and Cox process of excursions
	\item[A.4] Splitting at the maximum
	\item[A.5] Occupation formula
	\item[A.6] Markov Renewal theory
	\item[A.7] Harmonic functions
	\item[A.8] Conditioning to stay positive
	\item[A.9] Laws of large numbers
	\item[A.10] Tightness of the overshoots
\end{itemize}

Unfortunately a complete treatment would require a whole book's worth of text. Therefore, as a compromise and with an apology to the reader, the presentation of A.1 to A.6 mostly highlights selected results and the main steps to prove them. Almost all fluctuation theory can be constructed by analogy with fluctuation theory of L\'evy processes. The selected computations  we dwell on below  pertain largely to the peculiarities that are specific to the case of MAPs. 
Results in A.9 and A.10 are not in analogy to L\'evy processes and non-trivial so full proofs are given.
\bigskip

A.1. {\bf Basics.} Recall that $(\xi_t,J_t)_{t\geq 0}$ denotes a MAP  on $\mathbb{R}\times E$, where $E$ is a finite set.
 Recall also that its  natural filtration is denoted by  $(\mathcal{F}_t)_{t\geq0}$ and its probabilities by  $(P^{x,i})_{x\in\mathbb{R}, i\in E}$. We shall also assume that $E$ is irreducible and aperiodic and hence ergodic. Denote the intensity matrix of $J$ by $Q = (q_{i,j})_{i,j\in E}$. Its stationary distribution is denoted by $\pi = (\pi_1, \cdots, \pi_{|E|})$.

\bigskip

{\it  Unless otherwise stated, we assume throughout  that $\xi$ is non-lattice, that is {\bf (NL)} is in force. }

 \bigskip

Referring to Proposition \ref{p1}, the characteristic exponents of the `pure-state' L\'evy processes appearing in Proposition \ref{p1} will be denoted by $\psi_i(z)=\log \E[\exp(z \xi^i_1)]$, $z\in \mathbb C$, whenever the  right-hand side exists. It suffices for us to deal with the case that $\psi_i(0)=0$ for all $i\in E$, i.e. none of the L\'evy processes are killed. Furthermore, whenever it exists,  define the matrix  $G(z)=\big(G_{i,j}(z)\big)_{i,j\in E} $, where $G_{i,j}(z) = \E[\exp(z \Delta_{i,j})]$, $i,j\in E$. For each $i,j\in E$ such that $i\neq j$, the random variables $\Delta_{i,j}$ have law $F_{i,j}$ corresponding to the distribution of the additional jump that is inserted into the path of the MAP when $J$ undergoes a transition from $i$ to $j$. For convenience we assume that $\Delta_{i,j}=0$ whenever $q_{i,j}=0$ and also set $\Delta_{i,i}=0$ for each $i\in E$. According to Proposition \ref{p1} this assumption is without loss of generality since those transitional jumps never occur.\\

A crucial role will be played by the matrices 
\begin{align}
	F(z):=\text{diag}\big(\psi_1(z),...,\psi_{|E|}(z)\big)+(q_{i,j} G_{i,j}(z))_{i,j\in E},
	\label{piversion}
\end{align}
which are defined on $\mathbb C$ whenever the right-hand side exists. The matrix $F$ is called the matrix exponent of the MAP $(\xi,J)$ because
\begin{align*}
	{E}^{0,i}\big[e^{z \xi_t},J_t=j\big]=\big(e^{F(z)t}\big)_{i,j},\quad i,j\in E,
\end{align*}
for all $z\in \mathbb C$ for which one of the sides is defined.

\bigskip

A.2 {\bf Duality.}
Given the MAP $\xi$ with probabilities $P^{x,i}$, $x\in\mathbb{R}$, $i\in E$, we can introduce the dual process; that is, the  MAP with probabilities $\hat{P}^{x,i}$, $x\in\mathbb{R}$, $i\in E$, whose matrix exponent, when it is defined, is given by, 
\begin{align*}
	\hat{E}^{0,i}\big[e^{z {\xi}_t},J_t=j\big]=\big(e^{\hat{F}(z)t}\big)_{i,j},\quad i,j\in E,
\end{align*}
where 
\[
	\hat{F}(z):=\text{diag}\big(\psi_1(-z),...,\psi_{|E|}(-z)\big)+\hat{Q} \circ G(-z)^{\rm T}
\]
and $\hat Q$ is the intensity matrix of the modulating Markov chain on $E$ with entries given by
\[
\hat{q}_{i,j} = \frac{\pi_j}{\pi_i}q_{j,i}, \qquad i,j\in E.
\]
Note that the latter can also be written $\hat{Q} = \Delta_\pi^{-1} Q^{\rm T} \Delta_\pi$, where $\Delta_\pi = \text{diag}(\pi_1, \cdots, \pi_{|E|})$ and hence, when it exists,
\[
\hat{F}(z) = \Delta_\pi^{-1}F(-z)^{\rm T}\Delta_\pi, 
\]
showing that 
\begin{align}\label{incrementduality}
	\pi_i\hat{E}^{0,i}\big[e^{z {\xi}_t},J_t=j\big]=\pi_j{E}^{0,j}\big[e^{-z {\xi}_t},J_t=i\big].
\end{align}
At the level of processes, one can understand (\ref{incrementduality}) as changing time-directions:

\begin{lemma}\label{duality}We have that  $\{ (\xi_{(t-s)-} -\xi_t, J_{(t-s)-}): s\leq t\}$ under $P^{0,\pi} =\sum_{i=1}^{|E|}\pi_i P^{0,i}$ is equal in law to $\{(\xi_s, J_s) : s\leq t\}$ under $\hat{P}^{0,\pi}$. 
\end{lemma}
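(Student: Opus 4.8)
\textbf{Proof plan for Lemma \ref{duality}.}

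The plan is to reduce the path-level statement to a finite-dimensional one, since equality in law of two c\`adl\`ag processes indexed by $s \in [0,t]$ follows from agreement of all finite-dimensional distributions (the processes live on a fixed finite time window and both sides are c\`adl\`ag, so the usual monotone-class argument applies). So I would fix $0 \le s_1 < s_2 < \cdots < s_n \le t$ and aim to show that, under $P^{0,\pi}$, the vector $\big((\xi_{(t-s_k)-} - \xi_t, J_{(t-s_k)-})\big)_{k=1}^n$ has the same law as $\big((\xi_{s_k}, J_{s_k})\big)_{k=1}^n$ under $\hat P^{0,\pi}$. A minor technical point to dispatch first: since $\xi$ has no fixed discontinuities and $J$ has no fixed jump times, one may replace the left limits $\xi_{(t-s_k)-}$, $J_{(t-s_k)-}$ by $\xi_{t-s_k}$, $J_{t-s_k}$ inside any finite-dimensional distribution without changing it, which removes the bookkeeping associated with the left limits.

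The core of the argument is then a direct computation using the Markov additive structure. Writing $u_0 = 0 < u_1 < \cdots < u_n < u_{n+1} = t$ for the reversed times $u_k = t - s_{n+1-k}$, the left-hand finite-dimensional law can be expanded by conditioning successively on $(\xi,J)$ at the times $0, u_1, \ldots, u_n, t$ and using that increments of the MAP over disjoint intervals are conditionally independent given the values of $J$ at the endpoints. This expresses the joint law as a product of one-step transition kernels of the form $P^{0,\pi}$-weighted $E^{0,i}[\cdot\,;\, \xi_{u_{k+1}-u_k}\in \cdot\,, J_{u_{k+1}-u_k}=j]$, i.e. in terms of the semigroup entries $\big(e^{F(\cdot)(u_{k+1}-u_k)}\big)_{i,j}$ in Fourier variables. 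The analogous expansion for the right-hand side produces the same chain of kernels but for $\hat P$, hence the entries $\big(e^{\hat F(\cdot)(u_{k+1}-u_k)}\big)_{i,j}$, read in the opposite order. The identity \eqref{incrementduality}, i.e. $\pi_i \hat E^{0,i}[e^{z\xi_r}, J_r = j] = \pi_j E^{0,j}[e^{-z\xi_r}, J_r=i]$, is exactly the telescoping relation needed to match the two products term by term: each factor $\pi_{i}$-weighted forward kernel from $i$ to $j$ over a time increment, when the spatial variable is negated and the chain indices are swapped, becomes the corresponding $\hat P$ kernel from $j$ to $i$, and the $\pi$-weights chain up correctly because $\pi$ is the stationary law of both $Q$ and $\hat Q$. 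The negation of the spatial variable is precisely what implements $\xi_{(t-s)-}-\xi_t = -(\xi_t - \xi_{(t-s)-})$, and the reversal of the index order matches $s_1 < \cdots < s_n$ on the left against the reversed times on the right.

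The step I expect to require the most care is the combinatorial bookkeeping of the telescoping: one must be careful that the stationary weights $\pi_i$ appearing in $P^{0,\pi} = \sum_i \pi_i P^{0,i}$ on the left and in $\hat P^{0,\pi}$ on the right cancel against the ratios $\pi_j/\pi_i$ generated by repeated application of \eqref{incrementduality}, leaving exactly one surviving $\pi$-factor (at the ``free'' end of the chain) on each side. Concretely, after applying \eqref{incrementduality} to each of the $n{+}1$ increments and reindexing $i_0, \ldots, i_{n+1}$ in reverse, the product of forward kernels weighted by $\pi_{i_0}$ becomes the product of dual kernels weighted by $\pi_{i_{n+1}}$, and summing over the (now unconstrained) internal indices is legitimate since $E$ is finite. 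Once this identity of finite-dimensional characteristic functions is established, Fourier inversion on $\mathbb{R}^n$ together with the finiteness of $E$ gives equality of finite-dimensional distributions, and the c\`adl\`ag reduction mentioned above completes the proof.
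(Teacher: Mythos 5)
Your proposal is correct and is exactly the standard time-reversal argument the paper has in mind: the paper in fact omits the proof of Lemma \ref{duality} (it is "left to the reader" as the MAP analogue of the L\'evy case in Chapter II of \cite{Bertoin}), and your reduction to finite-dimensional distributions, followed by segment-by-segment application of \eqref{incrementduality} with the stationarity of $\pi$ absorbing the unobserved initial segment and transferring the single surviving $\pi$-weight to the other end of the chain, is precisely that argument. The only points needing the care you already flag are the absence of fixed-time discontinuities (to drop the left limits in the fdds) and the bookkeeping that the forward side carries one extra segment $[0,t-s_n]$ whose only effect, by stationarity, is to redistribute $\pi$ to the anchor of the reversed chain.
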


Additionally to the ordinary duality \eqref{incrementduality} we will use duality in the general sense of \eqref{dual} for the killed MAP
\[
P^\dag_t((x,i),(dy,\{j\})) = P^{x,i}[\xi_t \in dy, \, \bar\xi_t\leq 0; J_t = j], \qquad x,y\leq 0, t\geq 0, i,j\in E,
\]
where $\bar\xi_t = \sup_{s\leq t}\xi$. The next two duality formulas are called switching identities:

\begin{lemma}\label{doubledual} If $x,y\in\mathbb{R}$ and $i,j\in E$, then
\[
\hat{P}^{x,i}(\xi_t \in dy; J_t = j)\pi_i dx = {P}^{y,j}(\xi_t \in dx; J_t = i)\pi_j dy 
\]
and, for $x,y\leq 0$,
\[
\hat P^\dag_t\big((x,i),(dy,\{j\})\big) \pi_idx= P^\dag_t\big((y,j),(dx,\{i\})\big)\pi_jdy.
\]
\end{lemma}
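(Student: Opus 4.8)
The plan is to prove the two switching identities separately. The first is essentially the increment duality \eqref{incrementduality} rewritten: taking $z=\mathrm i\theta$ there makes both sides characteristic functions, so Fourier inversion yields the measure identity $\pi_i\,\hat P^{0,i}(\xi_t\in dw;J_t=j)=\pi_j\,P^{0,j}(-\xi_t\in dw;J_t=i)$, and spatial homogeneity of a MAP in its first coordinate (so that $\hat P^{x,i}(\xi_t\in dy;J_t=j)=\hat P^{0,i}(\xi_t\in d(y-x);J_t=j)$, and likewise for $P$) lets one substitute $w=y-x$ and translate back to get $\pi_i\hat P^{x,i}(\xi_t\in dy;J_t=j)=\pi_j P^{y,j}(\xi_t\in dx;J_t=i)$; multiplying by $dx\,dy$ gives the first identity.

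The second identity genuinely uses the pathwise time-reversal of Lemma~\ref{duality}. Fixing $i,j\in E$ and testing both sides against bounded measurable $f$ (at the starting point) and $g$ (at the terminal point) supported in $(-\infty,0]$, it suffices to show that $I_1=I_2$, where
\[
I_1:=\pi_i\int_{-\infty}^0 f(x)\,\hat E^{x,i}\big[g(\xi_t)\,\mathbf 1_{\{\bar\xi_t\le0\}};J_t=j\big]\,dx,\qquad
I_2:=\pi_j\int_{-\infty}^0 g(y)\,E^{y,j}\big[f(\xi_t)\,\mathbf 1_{\{\bar\xi_t\le0\}};J_t=i\big]\,dy.
\]
First I would use spatial homogeneity to move each starting point to the origin — the event $\{\bar\xi_t\le0\}$ then becomes $\{\bar\xi_t\le-x\}$, which forces $x\le-\bar\xi_t\le0$ since $\bar\xi_t\ge\xi_0=0$, so the constraint $x\le0$ is automatic — and then Fubini together with the substitution $u=\xi_t+x$ to reach
\[
I_1=\pi_i\,\hat E^{0,i}\Big[\mathbf 1_{\{J_t=j\}}\int_{-\infty}^{\xi_t-\bar\xi_t} f(u-\xi_t)\,g(u)\,du\Big],\qquad
I_2=\pi_j\, E^{0,j}\Big[\mathbf 1_{\{J_t=i\}}\int_{-\infty}^{\xi_t-\bar\xi_t} f(u)\,g(u-\xi_t)\,du\Big].
\]

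Next I would apply Lemma~\ref{duality} in the form $\hat E^{0,\pi}[\Phi((\xi_s,J_s)_{s\le t})]=E^{0,\pi}[\Phi((\xi_{(t-s)-}-\xi_t,J_{(t-s)-})_{s\le t})]$ with the path functional $\Phi((w,\iota)):=\mathbf 1_{\{\iota_0=i\}}\mathbf 1_{\{\iota_t=j\}}\int_{-\infty}^{w_t-\sup_{s\le t}w_s} f(u-w_t)\,g(u)\,du$. The left-hand side is exactly $I_1$. For the right-hand side, write $\tilde w_s=\xi_{(t-s)-}-\xi_t$ and $\tilde\iota_s=J_{(t-s)-}$; then $\tilde\iota_0=J_{t-}=J_t$ and $\tilde\iota_t=J_{0-}=J_0$ (using that a MAP has no jump at the fixed time $t$), $\tilde w_t=-\xi_t$, and $\sup_{s\le t}\tilde w_s=\sup_{u\le t}\xi_{u-}-\xi_t=\bar\xi_t-\xi_t$ (again almost surely, by the no-jump-at-$t$ property). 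Substituting these, $\Phi$ evaluated on the reversed path becomes $\mathbf 1_{\{J_t=i\}}\mathbf 1_{\{J_0=j\}}\int_{-\infty}^{-\bar\xi_t}f(u+\xi_t)\,g(u)\,du$, so the right-hand side equals $\pi_j E^{0,j}\big[\mathbf 1_{\{J_t=i\}}\int_{-\infty}^{-\bar\xi_t}f(u+\xi_t)g(u)\,du\big]$, which is $I_2$ after the change of variable $v=u+\xi_t$. Hence $I_1=I_2$, and since $f,g,i,j$ were arbitrary the second identity follows.

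The homogeneity shifts, Fubini and the changes of variables are routine. The one point needing care is the interaction of the barrier event $\{\bar\xi_t\le0\}$ with the time reversal — namely that reversal turns $\xi_t$ into $-\xi_t$, turns $\bar\xi_t$ into $\bar\xi_t-\xi_t$ via the almost-sure identity $\sup_{u\le t}\xi_{u-}=\bar\xi_t$, and interchanges the initial and terminal states of $J$ — together with keeping straight which of $f,g$ sits at the starting point and which at the terminal point after reversal. I expect this bookkeeping, rather than any conceptual difficulty, to be the main obstacle.
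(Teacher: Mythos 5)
Your proof is correct: the first identity follows from \eqref{incrementduality} by Fourier inversion and spatial homogeneity, and the second from the pathwise time reversal of Lemma \ref{duality} applied to the functional you write down, with the bookkeeping for $\bar\xi_t$ under reversal handled properly (the a.s. identity $\sup_{u\le t}\xi_{u-}=\bar\xi_t$ and the swap of initial and terminal states of $J$). The paper explicitly leaves this proof to the reader as the standard analogue of the L\'evy-process argument, and yours is exactly that argument carried out in detail.
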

The proofs of the previous two lemmas are standard, especially in light of the straightforward nature of the analogous proofs for for L\'evy processes (see for example Chapter II of \cite{Bertoin}), and we leave them to the reader.
\bigskip

A.3. {\bf Local time and Cox process of excursions.}  Let  $Y^{(x)}_t = (x\vee\bar\xi_t) - \xi_t$, $t\geq 0$, where we recall that  $\bar\xi_t = \sup_{s\leq t}\xi_s$. 
Following ideas that are well known from the theory of L\'evy processes, it is straightforward to show that, as a pair, the process $(Y^{(x)},J)$ is a strong Markov process.
For convenience, write $Y$ in place of $Y^{(0)}$.
Since $(Y,J)$ is a strong Markov process, by the general theory (c.f. Chapter   IV of \cite{Bertoin}) there exists a local time at the point $(0,i)$, which we henceforth denote by $\{\bar{L}^{(i)}_t: t\geq 0\}$. Now consider the process
\[
\bar{L}_t := \sum_{i\in E} \bar{L}^{(i)}_t, \qquad t\geq 0.
\]
Since, almost surely, for each $i\neq j$ in $E$, the points of increase of  $\bar{L}^{(i)}$ and $\bar{L}^{(j)}$ are disjoint, it follows that  $(\bar{L}^{-1}, H^+, J^+): = \{(\bar{L}^{-1}_t, H^+_t, J^+_t):t\geq 0\}$ is a (possibly killed)  Markov additive bivariate subordinator, where	
\[
H^+_t : = \xi_{\bar{L}^{-1}_t}\text{ and } J^+_t : = J_{\bar{L}^{-1}_t}, \qquad \text{ if } \bar{L}^{-1}_t<\infty,
\]
and $H^+_t : = \infty$ and $J^+_t : = \infty$ otherwise.
Note that the rate at which the process $(\bar{L}^{-1}, H^+, J^+)$ is killed depends on the state of the chain $J^+$ when killing occurs. This will be addressed in more detail shortly. We also note that $\{\epsilon_t: t\geq 0\}$ is a (killed) Cox process, where 
\[
\epsilon_t  = \{\xi_{\bar{L}^{-1}_{t-} + s} - \xi_{\bar{L}^{-1}_{t-}} : s\leq \Delta \bar{L}^{-1}_t\}, \qquad \text{ if }\Delta \bar{L}^{-1}_{t}>0,
\]
and $\epsilon_t = \partial$, some isolated state, otherwise.
Henceforth, write $n_i$ for the intensity measure of this Cox process when the underlying modulating chain $J^+$ is in state $i\in E$.
As a bivariate Markov additive subordinator, the process $(\bar{L}^{-1}, H^+, J^+)$ has a matrix Laplace exponent given by
\[
{E}^{0,i}\big[e^{-\alpha \bar{L}^{-1}_t - \beta H^+_t }, J^+_t = j\big] = \big(e^{- \kappa^+(\alpha, \beta)t}\big)_{i,j},\qquad \alpha,\beta\geq 0,
\]
where the matrix $\kappa^+(\alpha, \beta)$ has the structure
\[
\kappa^+(\alpha,\beta) = \text{diag}\big(\Phi^+_1(\alpha,\beta), \cdots,  \Phi^+_k(\alpha,\beta)\big)  - Q^+\circ G^+(\beta),\qquad \alpha,\beta\geq 0
\]
such that, for $i\in E$,  $\Phi^+_i(\alpha, \beta)$ is the subordinator exponent that describes the movement of  $(\bar{L}^{-1}, H^+)$ when the modulating chain $J^+$ is in state $i$. Moreover, $Q^+$ is the intensity of $J^+$ and the matrix $G^+(\beta) = (G^+(\beta))_{i,j}$ is such that, for $i\neq j$ in $E$, its $(i,j)$-th entry is the Laplace transform of $F^+_{i,j}$, the distribution of the additional jump incurred by  $H$ when the modulating chain changes state from $i$ to $j$. The diagonal elements of $G^+(\beta)$ are set to unity. Note that there is no additional jump incurred by $\bar{L}^{-1}$ when the modulating chain changes state. For future reference, write 
\[
\Phi^+_i(\alpha, \beta) = {n}_i(\zeta = \infty) + a_i \alpha + b_i \beta + \int_0^\infty\int_0^\infty (1-e^{-\alpha x -\beta y})n_i(\zeta \in dx, \epsilon_\zeta \in dy, J_\zeta = i),\qquad \alpha,\beta\geq 0, 
\]
where $a_i, b_i\geq 0$ and $\zeta = \inf\{s\geq 0: \epsilon >0\}$ the excursion length.
Note in particular that the matrix
\[
\kappa^+(0,0) = \diag\big({n}_1(\zeta = \infty), \cdots, {n}_k(\zeta = \infty)\big),
\]
encodes the respective killing rates of $(\bar{L}^{-1}, H^+, J^+)$ when $J^+$ is in each state of $E$.

\bigskip

The assumption that $\xi$ is non-lattice implies that 
 the jump measures associated to $H^+$, namely $n_i(\epsilon_\zeta\in dx, J^+_\zeta = i)$, $i\in E$, and $F^+_{i,j}$, $i\neq j$, $i,j\in E$, are diffuse on $(0,\infty)$. For the sake of brevity, we give no proof of this fact here. Instead we refer to proof of the analogous result for the case of L\'evy processes. In that case,  one may draw the desired conclusion out of, for example,  Vigon's identity for the jump measure of the ascending ladder height process; see Theorem 7.8 in \cite{kyprianou}. As one sees from the proof there, this identity is derived  using the the so-called quintuple law of the first passage problem, which itself follows from a straightforward application of the compensation formula for the Poisson point process of jumps. A quintuple law can also be derived in the MAP setting using the same technique as in the L\'evy setting, where one appeals to an analogue of the compensation formula for the Cox process of jumps. This would also form the basis of the proof that the jump measures associated to $H^+$  are diffuse in the MAP case.

\bigskip

A.4. {\bf Splitting at the maximum.} Now suppose that $\mathbf{e}_q$ is an exponentially distributed random variable with rate $q>0$. Consider a marked version of the Cox process described above in which each excursion $\epsilon_t\neq \partial$ is marked with an independent copy of $\mathbf{e}_q$, denoted by $\mathbf{e}^{(t)}_q$, for $t\geq 0$. Let $\overline{m}_{t} = \sup\{s\leq t: \overline{\xi}_t = \xi_s \} $. 
Poisson thinning dictates that 
$
(\overline{\xi}_{\mathbf{e}_q}, \overline{m}_{\mathbf{e}_q}) 
$ is equal in law to the process $(\bar{L}^{-1}, H^+)$ conditioned on $\{\Delta \bar{L}^{-1}_t  < \mathbf{e}^{(t)}_q \text{ for all }t\geq 0\}$ and stopped with rate matrix 
\begin{align*}
&\quad{\text{diag}(a_1q + {n}_1(\zeta > \mathbf{e}_q), \cdots, a_{|E|} q + {n}_{|E|}(\zeta > \mathbf{e}_q))}&\\
&=\text{diag}(a_1 q+ {n}_1(1- e^{-q\zeta}), \cdots, a_{|E|} q+ {n}_{|E|}(1-e^{-q\zeta}))\\
&=\text{diag}(\Phi^+_1(q,0), \cdots, \Phi^+_{|E|}(q,0)).
\end{align*}
In particular, the conditioned process is stopped at a random time $\theta_q$ with the property that
\[
{P}^{0,i}\big(\theta_q >t \,|\,\sigma\{J^+_s: s\leq t\}\big) = \exp\Big(-\int_0^t \Phi^+_{J_s}(q,0)\,ds\Big).
\]

The aforementioned conditioned process has matrix exponent which can be derived from the matrix exponent $\kappa^+(\alpha,\beta)$. Indeed, whereas in $\kappa^+(\alpha, \beta)$ the pure states are represented as
$
\Phi^+_i(\alpha, \beta) $
in the conditioned process, this is replaced by 
\[
n_i(\zeta = \infty) + a_i \alpha + b_i \beta + \int_0^\infty\int_0^\infty (1-e^{-\alpha x -\beta y})e^{-q x}n_i(\zeta \in dx, \epsilon_\zeta \in dy, J_\zeta = i),\qquad \alpha,\beta\geq 0, 
\]
which is also equal to $ \Phi^+_i(q+\alpha, \beta)-\Phi^+_i(q,0)$. Hence the conditioned process has matrix exponent given by 
\begin{equation}
\tilde{\kappa}^+(\alpha, \beta) := \text{diag}(\Phi^+_1(q+\alpha,\beta) - \Phi^+_1(q,0), \cdots,  \Phi^+_{|E|}(q+\alpha,\beta) -\Phi^+_{|E|}(q,0)) - Q^+\circ G^+(\beta), 
\label{conditionedLT}
\end{equation}
for $\alpha,\beta\geq 0$.

For convenience,  denote by $(\mathcal{L}^{-1}, \mathcal{H}, J^+)$ the process corresponding to  $(\bar{L}^{-1}, H^+)$ conditioned on $\{\Delta \bar{L}^{-1}_t  < \mathbf{e}^{(t)}_q \text{ for all }t\geq 0\}$, i.e. the Markov additive process with joint Laplace exponent give by (\ref{conditionedLT}).
It now follows that the pair $(\overline\xi_{\mathbf{e}_q}, \overline{m}_{\mathbf{e}_q})$ has matrix Laplace transform given by  
\begin{align}\label{resolvent}
\begin{split}
&\quad{{E}^{0,i}(e^{- \alpha\overline{m}_{\mathbf{e}_q}- \beta \overline\xi_{\mathbf{e}_q}}, J_{\overline{m}_{\mathbf{e}_q}} = j) }\\
&={E}^{0,i}\Big[e^{-\alpha \mathcal{L}^{-1}_{\theta_q}-\beta\mathcal{H}_{\theta_q}}\mathbf{1}_{(J^+_{\theta_q} =j)}\Big]\\
&={E}^{0,i}\Big[\int_0^\infty du\,\mathbf{1}_{(J^+_u= j) } \Phi^+_{J^+_u}(q,0)e^{-\int_0^u \Phi^+_{J^+_s}(q,0)ds} e^{-\alpha \mathcal{L}^{-1}_u-\beta\mathcal{H}_u}\Big]\\
&= \int_0^\infty du\,  \Phi^+_{j}(q,0){E}^{0,i}\Big[e^{-\int_0^u \Phi^+_{J^+_s}(q,0)ds} e^{-\alpha \mathcal{L}^{-1}_{u}-\beta\mathcal{H}_{u}}\mathbf{1}_{(J^+_u= j) }\Big],
\end{split}
\end{align}
for $\alpha, \beta\geq 0$.
Note that  the final expectation above can be written in terms of the matrix Laplace exponent of $(\mathcal{L}^{-1}, \mathcal{H}, J^+)$ with a potential corresponding to $\text{diag}(\Phi^+_1(q,0), \cdots,\Phi^+_{|E|}(q,0))$, i.e.
\[
\kappa^+(q+\alpha, \beta) =\text{diag}(\Phi^+_1(q+\alpha,\beta) , \cdots,  \Phi^+_{|E|}(q+\alpha,\beta) )- Q^+\circ G^+(\beta), \qquad \alpha,\beta\geq 0.
\]
Indeed, one has,
\[
{E}^{0,i}\left[e^{-\int_0^u \Phi^+_{J^+_s}(q,0)ds} e^{-\alpha \mathcal{L}^{-1}_u-\beta\mathcal{H}_u}\mathbf{1}_{(J^+_u= j) }\right] = [e^{-\kappa^+(q+\alpha, \beta)}]_{i,j}.
\]
Continuing the computation in (\ref{resolvent}), we now have the following result. 
\begin{theorem}\label{preWHF}For $i,j\in E$, $\alpha, \beta\geq 0$ and $q>0$,
\begin{align}
{E}^{0,i}\big[e^{- \alpha\overline{m}_{\mathbf{e}_q}- \beta \overline\xi_{\mathbf{e}_q}}, J_{\overline{m}_{\mathbf{e}_q}} = j\big]
= \Phi^+_j(q,0)[\kappa^+(q+\alpha,\beta)^{-1}]_{i,j}.
\label{maxexp}
\end{align}
\end{theorem}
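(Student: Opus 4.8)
The plan is to verify Theorem~\ref{preWHF} by assembling the pieces already laid out in Section~A.4. The starting observation is the Poisson thinning/splitting-at-the-maximum description: the pair $(\overline{\xi}_{\mathbf{e}_q},\overline{m}_{\mathbf{e}_q})$ is equal in law to $(\bar L^{-1},H^+)$ conditioned on the event $\{\Delta\bar L^{-1}_t<\mathbf{e}^{(t)}_q\text{ for all }t\geq 0\}$ and stopped at the time $\theta_q$, which, conditionally on the path of $J^+$, has hazard rate $\Phi^+_{J^+_s}(q,0)$. Writing the conditioned process as $(\mathcal{L}^{-1},\mathcal{H},J^+)$ with matrix exponent $\tilde\kappa^+(\alpha,\beta)$ from \eqref{conditionedLT}, I would first record the identity
\[
{E}^{0,i}\big(e^{-\alpha\overline{m}_{\mathbf{e}_q}-\beta\overline\xi_{\mathbf{e}_q}},J_{\overline{m}_{\mathbf{e}_q}}=j\big)={E}^{0,i}\big[e^{-\alpha\mathcal{L}^{-1}_{\theta_q}-\beta\mathcal{H}_{\theta_q}}\mathbf{1}_{(J^+_{\theta_q}=j)}\big],
\]
which is exactly the first line of the display \eqref{resolvent}.

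Next I would unfold the stopping at $\theta_q$. Conditioning on $\mathcal{G}_\infty:=\sigma\{(\mathcal{L}^{-1}_s,\mathcal{H}_s,J^+_s):s\geq 0\}$ and using ${P}^{0,i}(\theta_q>u\mid\mathcal{G}_\infty)=\exp(-\int_0^u\Phi^+_{J^+_s}(q,0)\,ds)$ together with the fact that the density of $\theta_q$ at $u$ is $\Phi^+_{J^+_u}(q,0)\exp(-\int_0^u\Phi^+_{J^+_s}(q,0)\,ds)$, I obtain the integral representation
\[
{E}^{0,i}\big[e^{-\alpha\mathcal{L}^{-1}_{\theta_q}-\beta\mathcal{H}_{\theta_q}}\mathbf{1}_{(J^+_{\theta_q}=j)}\big]=\int_0^\infty\! du\,\Phi^+_j(q,0)\,{E}^{0,i}\big[e^{-\int_0^u\Phi^+_{J^+_s}(q,0)\,ds}\,e^{-\alpha\mathcal{L}^{-1}_u-\beta\mathcal{H}_u}\mathbf{1}_{(J^+_u=j)}\big],
\]
where $\Phi^+_j(q,0)$ can be pulled out of the integrand because on $\{J^+_u=j\}$ the rate $\Phi^+_{J^+_u}(q,0)$ equals $\Phi^+_j(q,0)$. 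This is the content of the remaining lines of \eqref{resolvent}.

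The final step is to identify the matrix-valued integrand. The quantity ${E}^{0,i}[e^{-\int_0^u\Phi^+_{J^+_s}(q,0)\,ds}e^{-\alpha\mathcal{L}^{-1}_u-\beta\mathcal{H}_u}\mathbf{1}_{(J^+_u=j)}]$ is the $(i,j)$ entry of the exponential of the matrix obtained by adding the diagonal killing potential $\diag(\Phi^+_1(q,0),\dots,\Phi^+_{|E|}(q,0))$ to $-\tilde\kappa^+(\alpha,\beta)$; since $\tilde\kappa^+(\alpha,\beta)=\kappa^+(q+\alpha,\beta)-\diag(\Phi^+_1(q,0),\dots,\Phi^+_{|E|}(q,0))$, this sum is precisely $-\kappa^+(q+\alpha,\beta)$, so the integrand equals $[e^{-\kappa^+(q+\alpha,\beta)u}]_{i,j}$. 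Then
\[
\int_0^\infty [e^{-\kappa^+(q+\alpha,\beta)u}]_{i,j}\,du=[\kappa^+(q+\alpha,\beta)^{-1}]_{i,j},
\]
valid because $\kappa^+(q+\alpha,\beta)$ has eigenvalues with strictly positive real part for $q>0$ (it is a killed-MAP exponent), and multiplying by the scalar $\Phi^+_j(q,0)$ yields \eqref{maxexp}.

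I expect the only genuinely delicate point to be the bookkeeping in the last step: justifying that adding the diagonal potential to the conditioned exponent $\tilde\kappa^+$ recovers $\kappa^+(q+\alpha,\cdot)$, and that the resulting matrix is invertible so the time integral converges. The probabilistic input (thinning, the form of $\theta_q$) is already established in the text; everything else is linear algebra and an application of Fubini to interchange the expectation with the $du$-integral, which is legitimate since all integrands are nonnegative.
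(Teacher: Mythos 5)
Your proposal is correct and follows essentially the same route as the paper: Poisson thinning to identify $(\overline{\xi}_{\mathbf{e}_q},\overline{m}_{\mathbf{e}_q})$ with the conditioned ladder process stopped at $\theta_q$, the integral representation over the hazard rate of $\theta_q$, and the Feynman--Kac identification $\tilde\kappa^+(\alpha,\beta)+\diag(\Phi^+_1(q,0),\dots,\Phi^+_{|E|}(q,0))=\kappa^+(q+\alpha,\beta)$ followed by integrating the matrix exponential. The only blemish is the loose phrasing ``adding the potential to $-\tilde\kappa^+$'' (literally that gives $-\tilde\kappa^++D$, not $-\kappa^+(q+\alpha,\beta)$; the killing potential is subtracted from the generator), but your displayed formulas and conclusion are exactly those of the paper.
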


We can go a little further in our analysis of the previous section and note that, on the event $\{J^+_{\theta_q} =j\}$, the excursion $\epsilon_{J^+_{\theta_q}}$ is independent of $\{(\bar{L}^{-1}_t, H^+_t, J^+_t): t< \theta_q\}$. In particular, on $\{J^+_{\theta_q} =j\}$, we have that $(\overline{\xi}_{\mathbf{e}}, \overline{m}_{\mathbf{e}_q})$ is independent of $(\xi_{\mathbf{e}_q} - \overline{\xi}_{\mathbf{e}_q}, \mathbf{e}_q - \overline{m}_{\mathbf{e}_q} )$.

Duality allows us to conclude that on the event $\{J^+_{\theta_q} = j, J_{\mathbf{e}_q }= k\} =\{J_{\overline{m}_{\mathbf{e}_q}} = j, J_{\mathbf{e}_q} = k\}$ the pair $( \overline{\xi}_{\mathbf{e}_q}-\xi_{\mathbf{e}_q} , \mathbf{e}_q - \overline{m}_{\mathbf{e}_q} )$ is equal in law to the pair
$( \overline{\hat{\xi}}_{\mathbf{e}_q},  {\overline{\hat{m}}}_{\mathbf{e}_q} )$ on $\{\hat{J}_0 = k, \hat{J}_{\overline{\hat{m}}_{\mathbf{e}_q}}  =j\}$, where $\{(\hat\xi_s, \hat{J}_s): s\leq t\}: = \{ (\xi_{(t-s)-} -\xi_t, J_{(t-s)-}): s\leq t\}$, $t\geq 0$, is equal in law to the dual of $\xi$, $\overline{\hat{\xi}}_t = \sup_{s\leq t}\hat{\xi}_s$ and $\overline{\hat{m}} = \sup\{s\leq t: \overline{\hat\xi}_s = \hat{\xi}_t\}$.


From the previous section, we may now deduce that, for $i,j, k\in E$ and $\alpha, \beta \geq 0$,
\begin{eqnarray}
{E}^{0,i}\big[e^{- \alpha(\mathbf{e}_q - {m}_{\mathbf{e}_q})- \beta(\overline\xi_{\mathbf{e}_q}-\xi_{\mathbf{e}_q}) }, J_{{m}_{\mathbf{e}_q}} = j, \, J_{\mathbf{e}_q }  =k\big] 
&=& {E}^{0,k}\big[e^{-\alpha\overline{\hat{m}}_{\mathbf{e}_q}  -\beta \overline{\hat\xi}_{\mathbf{e}_q}  }, \hat{J}_{\overline{\hat{m}}_{\mathbf{e}_q}} = j\big] \notag\\
&=& \hat{E}^{0,k}\big[e^{-\alpha\overline{{m}}_{\mathbf{e}_q}  -\beta \overline{\xi}_{\mathbf{e}_q}  }, {J}_{\overline{{m}}_{\mathbf{e}_q}} = j\big] .
\label{dualequation}
 \end{eqnarray}

 We can also use the ideas above to prove the following technical lemma which will be of use later on.

 \begin{lemma} For all $j\in E$,
 \[
c : =\sum_{j\in E} \lim_{q\downarrow0}\frac{\Phi^+_j(q,0)\hat\Phi^+_j(q,0)}{q}
 \]
 exists in $(0,\infty)$ and, for each $j\in E$,
 \begin{equation}
 c_j := \lim_{q\downarrow0}\frac{\Phi^+_j(q,0)\hat\Phi^+_j(q,0)}{q}
 \label{timewh}
 \end{equation}
 exists in $[0,\infty)$.
 \end{lemma}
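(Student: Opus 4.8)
The plan is to derive the statement from the Wiener--Hopf factorisation of $(\xi,J)$ in the time variable, the matrix analogue of the L\'evy identity $\kappa(q)\hat\kappa(q)=q$. Write $M(q)=\diag(\Phi^+_1(q,0),\dots,\Phi^+_{|E|}(q,0))$, let $\hat M(q)$ be the corresponding diagonal matrix built from the dual exponents $\hat\Phi^+_j(q,0)$, and set $\Delta_\pi=\diag(\pi_1,\dots,\pi_{|E|})$. Splitting the $\mathbf{e}_q$-killed path at its maximum --- i.e.\ combining Theorem~\ref{preWHF}, the identity~\eqref{dualequation}, and the conditional independence of the path before and after the maximum given the modulator state there --- with the elementary decomposition $\mathbf{e}_q=\overline m_{\mathbf{e}_q}+(\mathbf{e}_q-\overline m_{\mathbf{e}_q})$ gives, after specialising the spatial argument to $\beta=0$, an identity of the form
\[
\Delta_\pi^{-1}\hat\kappa^+(q,0)^{\rm T}\Delta_\pi\,\kappa^+(q,0)=\gamma\,(qI-Q),\qquad q>0,
\]
with a constant $\gamma>0$, fixed by the two local-time normalisations and not depending on $q$. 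I would treat this as the time-Wiener--Hopf factorisation for MAPs; only the two scalar consequences below are needed, so the precise location of the $\pi$-conjugations is immaterial, and by continuity the identity also holds at $q=0$ with right-hand side $-\gamma Q$.

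The first scalar consequence comes from applying the identity to $\mathbf 1$. Since $G^+(0)$ and $\hat G^+(0)$ have unit entries, $Q^+\!\circ G^+(0)=Q^+$ is conservative and $\kappa^+(q,0)\mathbf 1=(\Phi^+_j(q,0))_j$, likewise $\hat\kappa^+(q,0)\mathbf 1=(\hat\Phi^+_j(q,0))_j$; using $Q\mathbf 1=0$ and summing the resulting vector identity against $\pi$ gives
\[
\sum_{j\in E}\pi_j\,\Phi^+_j(q,0)\,\hat\Phi^+_j(q,0)=\gamma\,q,\qquad q>0 .
\]
The left-hand side is strictly positive for $q>0$, so $\gamma\in(0,\infty)$; each summand is nonnegative, so $\Phi^+_j(q,0)\hat\Phi^+_j(q,0)\le\gamma q/\pi_j$, and letting $q\downto0$ forces $\Phi^+_j(0,0)\hat\Phi^+_j(0,0)=0$ for every $j\in E$ --- in each state at most one of the two ladder processes carries killing.

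The second scalar consequence is the diagonal of the factorisation. Write $\Phi^+_j(q,0)=\Phi^+_j(0,0)+\varphi_j(q)$ and $\hat\Phi^+_j(q,0)=\hat\Phi^+_j(0,0)+\hat\varphi_j(q)$, so $\varphi_j,\hat\varphi_j$ are subordinator Laplace exponents with $\varphi_j(0)=\hat\varphi_j(0)=0$; then $q^{-1}\varphi_j(q)\upto L_j\in[0,\infty]$ and $q^{-1}\hat\varphi_j(q)\upto\hat L_j\in[0,\infty]$ as $q\downto0$ (the subordinator means), while $\varphi_j(q),\hat\varphi_j(q)\to0$. Subtracting the matrix identity at $q=0$ from the one at $q>0$ --- only $M(q)$ and $\hat M(q)$ carry $q$-dependence and they are diagonal, so the off-diagonal exponents cancel --- dividing by $q$, and reading off the $(j,j)$-entry gives
\[
\hat\lambda_j\,\frac{\varphi_j(q)}{q}+\lambda_j\,\frac{\hat\varphi_j(q)}{q}+\frac{\varphi_j(q)\hat\varphi_j(q)}{q}=\gamma,\qquad q>0,
\]
where $\lambda_j=[\kappa^+(0,0)]_{jj}\ge\Phi^+_j(0,0)\ge0$ and $\hat\lambda_j=[\Delta_\pi^{-1}\hat\kappa^+(0,0)^{\rm T}\Delta_\pi]_{jj}\ge\hat\Phi^+_j(0,0)\ge0$ (the off-diagonal contribution $-Q^+$ has nonnegative diagonal). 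Now a short case distinction finishes it: all three terms are nonnegative with sum $\gamma$; if $\hat\lambda_j>0$ then $q^{-1}\varphi_j(q)$ is bounded, so $L_j<\infty$ and $q^{-1}\varphi_j(q)\hat\varphi_j(q)=(q^{-1}\varphi_j(q))\,\hat\varphi_j(q)\to0$, forcing the other two terms to converge; symmetrically when $\lambda_j>0$; and when $\lambda_j=\hat\lambda_j=0$ (so $\Phi^+_j(0,0)=\hat\Phi^+_j(0,0)=0$) the display collapses to $q^{-1}\varphi_j(q)\hat\varphi_j(q)\equiv\gamma$. In every case, using $\Phi^+_j(0,0)\hat\Phi^+_j(0,0)=0$,
\[
\frac{\Phi^+_j(q,0)\hat\Phi^+_j(q,0)}{q}=\hat\Phi^+_j(0,0)\frac{\varphi_j(q)}{q}+\Phi^+_j(0,0)\frac{\hat\varphi_j(q)}{q}+\frac{\varphi_j(q)\hat\varphi_j(q)}{q}
\]
converges to some $c_j\in[0,\gamma]$, which is \eqref{timewh}; and letting $q\downto0$ in the row-sum identity gives $\sum_j\pi_j c_j=\gamma$, whence $\gamma\le\sum_j c_j\le|E|\gamma$, so $c=\sum_j c_j\in(0,\infty)$.

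The one genuinely non-L\'evy difficulty is the non-commutativity of the matrices: one cannot ``divide through by $q$'' in the matrix identity as in the scalar case, so one must descend to scalars twice --- via the row sums, for the a priori bound and $\gamma>0$, and via the diagonal, for the existence of the individual limits. The diagonal identity is also precisely what rescues the degenerate situation in which a pure-state inverse-local-time subordinator has infinite mean ($L_j$ or $\hat L_j$ equal to $\infty$), where the naive ``limit of a product'' reasoning fails but $q^{-1}\varphi_j(q)\hat\varphi_j(q)$ is nevertheless constant.
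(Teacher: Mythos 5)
Your argument is clean from the second paragraph onward, but it rests entirely on the opening identity
\[
\Delta_\pi^{-1}\hat\kappa^+(q,0)^{\rm T}\Delta_\pi\,\kappa^+(q,0)=\gamma\,(qI-Q),\qquad q>0,
\]
with a $q$-independent $\gamma$, and that identity is not something the splitting at the maximum ``gives'' at $\beta=0$. What the splitting (Theorem \ref{preWHF} together with \eqref{dualequation}) actually produces, after setting the spatial argument to zero and inverting, is
\[
qI-Q \;=\; q\,\hat\kappa^+(q,0)^{\rm T}\,\Delta\big(\Phi^+_j(q,0)\hat\Phi^+_j(q,0)\big)^{-1}\,\kappa^+(q,0)
\]
(up to the placement of the $\pi$-conjugations), i.e.\ the unknown diagonal matrix $\diag\big(\Phi^+_j(q,0)\hat\Phi^+_j(q,0)\big)$ sits between the two ladder exponents. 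To pass from this to your display one must already know that $\Phi^+_j(q,0)\hat\Phi^+_j(q,0)/q$ is independent of $q$ (and, moreover, you have collapsed the resulting diagonal matrix of constants $c_j$ to a single scalar $\gamma$, which is only legitimate after the normalisation of local time justified in Remark \ref{c=1} --- itself a consequence of the lemma). Equivalently: your starting identity is the $z=0$ boundary case of Theorem \ref{WHF}, whose proof in the paper uses the very constants \eqref{timewh} you are trying to construct. As written, the proof assumes a strengthening of its own conclusion.

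The gap is repairable, and the repair is worth knowing because it is arguably sharper than the paper's argument. Keep the shifted killing rate: the splitting identity reads
\[
q\big((q+\alpha)I-Q\big)^{-1}=\kappa^+(q+\alpha,0)^{-1}\,\diag\big(\Phi^+_j(q,0)\hat\Phi^+_j(q,0)\big)\,\Delta_\pi^{-1}\big[\hat\kappa^+(q+\alpha,0)^{\rm T}\big]^{-1}\Delta_\pi .
\]
Fixing $\alpha'=q+\alpha$ and letting $q$ range over $(0,\alpha')$, everything except the left-hand factor $q$ and the middle diagonal matrix is frozen, so $\diag\big(\Phi^+_j(q,0)\hat\Phi^+_j(q,0)\big)=q\,\kappa^+(\alpha',0)(\alpha'I-Q)^{-1}\Delta_\pi^{-1}\hat\kappa^+(\alpha',0)^{\rm T}\Delta_\pi$: the ratio in \eqref{timewh} is \emph{exactly constant} in $q$, not merely convergent, and the whole of your second and third paragraphs becomes unnecessary. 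The paper instead divides the same identity by $q$ and lets $q\downarrow0$ with $\alpha>0$ fixed, reading off existence of $\sum_j c_j[\kappa^+(\alpha,0)^{-1}]_{i,j}[\hat\kappa^+(\alpha,0)^{-1}]_{k,j}$ from the convergence of the left-hand side; positivity of $c=\sum_j c_j$ then comes for free since the limit matrix $(\alpha I-Q)^{-1}$ is nonzero. Either route works, but neither is the one you wrote down.
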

 \begin{proof}
 Write $\hat\kappa^+(\alpha,\beta)$ for the dual matrix exponent, that is, to $\hat{F}(z)$ what $\kappa^+(\alpha, \beta)$ is to $F(z)$. On the one hand, for all $i,k\in E$ and $\alpha> 0$,
  \begin{align*}
{E}^{0,i}\big[e^{-\alpha\mathbf{e}_q}, J_{\mathbf{e}_q}=k\big]
&= \Big[\int_0^\infty qe^{-(\alpha+q) t}e^{Qt}dt\Big]_{i,k}\\
&= q\big[\big((q+\alpha)I -Q\big)^{-1}\big]_{i,k}.
 \end{align*}
 On the other hand, from (\ref{dualequation}), for all $i,k\in E$ and $\alpha> 0$,
 \begin{align*}
{E}^{0,i}\big[e^{-\alpha\mathbf{e}_q}, J_{\mathbf{e}_q}=k\big] 
&=\sum_{j\in E}{E}^{0,i}\big[e^{-\alpha (\overline{m}_{\mathbf{e}_q} + \mathbf{e}_q -{\overline{m}}_{\mathbf{e}_q}    )}, J_{\overline{m}_{\mathbf{e}_q}}=j,\, J_{\mathbf{e}_q}=k\big]\\
&=\sum_{j\in E}\Phi^+_j(q,0)[\kappa^+(q+\alpha,0)^{-1}]_{i,j}\hat\Phi^+_j(q,0)[\hat\kappa^+(q+\alpha,0)^{-1}]_{k,j}
 \end{align*}
 Taking limits as $q\downarrow0$ it follows from continuity that 
 \[
 \big[(\alpha I - Q)^{-1}\big]_{i,k} = \sum_{j\in E}\lim_{q\downarrow0}\frac{\Phi^+_j(q,0)\hat\Phi^+_j(q,0)}{q}[\kappa^+(\alpha,0)^{-1}]_{i,j}[\hat\kappa^+(\alpha,0)^{-1}]_{k,j},
 \]
 where the limit on the right-hand side exists because the limit exits on the lefthand side. The statement of the theorem now follows. 
\end{proof}
 
 The next theorem below gives the Wiener--Hopf factorisation for MAPs. It is a natural consequence of Theorem \ref{preWHF} and a well-established method of splitting stochastic processes at their maximum. Some results already  exist in the literature in this direction, see for example Chapter XI of \cite{Asmussen} and \cite{Kaspi}, however, none of them are in an appropriate form for our purposes. 
 
 \begin{remark}\rm\label{c=1} As a consequence of the Wiener--Hopf factorisation,  it will turn out that the constants $c_j$, $j\in E$, are all strictly positive and may be taken to be equal to unity without loss of generality. 
 \end{remark}


\begin{theorem}\label{WHF}
For $z\in \mathbb{R}\backslash\{0\}$ and $\alpha\geq 0$, 
\[
\alpha I- F({\rm i}z) = \Delta_\pi^{-1}[\hat\kappa^+(\alpha, {\rm i}z)^{\rm T}]\Delta_{\pi}\kappa^+(\alpha, -{\rm i}z) .
\]
\end{theorem}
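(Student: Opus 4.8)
The plan is to derive the Wiener--Hopf factorisation for MAPs from Theorem~\ref{preWHF} by splitting an independently exponentially killed MAP at its maximum, exactly as one does for L\'evy processes, but keeping track of the modulating chain. The starting point is the elementary identity for the resolvent of the killed MAP: for $\alpha\geq 0$, $z\in\RR\setminus\{0\}$ and $i,k\in E$,
\[
\int_0^\infty q e^{-(\alpha + q) t}\, {E}^{0,i}\big[e^{\iu z \xi_t}, J_t = k\big]\, dt = q\,\big[\big((\alpha+q) I - F(\iu z)\big)^{-1}\big]_{i,k},
\]
which is just the Laplace transform in $t$ of the matrix exponential $e^{F(\iu z)t}$. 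Equivalently, the left-hand side is ${E}^{0,i}\big[e^{-\alpha \mathbf{e}_q} e^{\iu z \xi_{\mathbf{e}_q}}, J_{\mathbf{e}_q} = k\big]$ where $\mathbf{e}_q$ is an independent exponential of rate $q$.

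The next step is the factorisation of the killed process at its maximum. Decompose $\xi_{\mathbf{e}_q} = \overline\xi_{\mathbf{e}_q} + (\xi_{\mathbf{e}_q} - \overline\xi_{\mathbf{e}_q})$ and $\mathbf{e}_q = \overline m_{\mathbf{e}_q} + (\mathbf{e}_q - \overline m_{\mathbf{e}_q})$, and recall from Section A.4 that, conditionally on $\{J_{\overline m_{\mathbf{e}_q}} = j\}$, the pre-maximum pair $(\overline m_{\mathbf{e}_q}, \overline\xi_{\mathbf{e}_q})$ is independent of the post-maximum pair $(\mathbf{e}_q - \overline m_{\mathbf{e}_q},\, \xi_{\mathbf{e}_q} - \overline\xi_{\mathbf{e}_q})$. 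Summing over the intermediate state $j\in E$ and combining Theorem~\ref{preWHF} (which evaluates the pre-maximum transform as $\Phi^+_j(q,0)[\kappa^+(q+\alpha, \beta)^{-1}]_{i,j}$ at $\beta = -\iu z$) with \eqref{dualequation} (which identifies the post-maximum transform, on $\{J_{\overline m_{\mathbf{e}_q}} = j,\, J_{\mathbf{e}_q} = k\}$, with the dual pre-maximum transform $\hat\Phi^+_j(q,0)[\hat\kappa^+(q+\alpha, \iu z)^{-1}]_{k,j}$), one obtains
\[
q\big[\big((\alpha+q)I - F(\iu z)\big)^{-1}\big]_{i,k} = \sum_{j\in E} \Phi^+_j(q,0)\,\hat\Phi^+_j(q,0)\,[\kappa^+(q+\alpha, -\iu z)^{-1}]_{i,j}\,[\hat\kappa^+(q+\alpha, \iu z)^{-1}]_{k,j}.
\]
In matrix form, using $\pi$-duality $\hat\kappa^+(\alpha,\beta) = \Delta_\pi^{-1}[\text{something}]$ carefully, the right-hand side is (after transposing the dual factor and absorbing the $\Delta_\pi$'s) $\big(\kappa^+(q+\alpha,-\iu z)^{-1}\big)^{\mathrm T} \Delta_\pi^{-1} D_q \Delta_\pi \big(\hat\kappa^+(q+\alpha,\iu z)^{-1}\big)$ up to the correct bookkeeping, where $D_q = \diag(\Phi^+_j(q,0)\hat\Phi^+_j(q,0))_{j\in E}$.

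Finally, let $q\downarrow 0$. By the preceding lemma, $q^{-1}D_q \to \diag(c_j)_{j\in E}$, and by Remark~\ref{c=1} the $c_j$ may all be taken equal to $1$, so $q^{-1}D_q \to I$; meanwhile $\kappa^+$ and $\hat\kappa^+$ are continuous at $q=0$ and $(\alpha+q)I - F(\iu z) \to \alpha I - F(\iu z)$. Dividing both sides by $q$ and passing to the limit yields
\[
\big(\alpha I - F(\iu z)\big)^{-1} = \kappa^+(\alpha, -\iu z)^{-1}\,\Delta_\pi^{-1}\,[\hat\kappa^+(\alpha, \iu z)^{-1}]^{\mathrm T}\,\Delta_\pi,
\]
and inverting both sides (all matrices being invertible for $z\neq 0$, $\alpha\geq 0$) gives precisely $\alpha I - F(\iu z) = \Delta_\pi^{-1}[\hat\kappa^+(\alpha,\iu z)^{\mathrm T}]\Delta_\pi\,\kappa^+(\alpha, -\iu z)$. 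The main obstacle I expect is the matrix bookkeeping: keeping the indices $i,j,k$ and the stationary-measure weights $\pi_i$ straight through the duality identity \eqref{dualequation} and the switching identities of Lemma~\ref{doubledual}, so that the transpose and the conjugation by $\Delta_\pi$ land on the correct factor. One must be careful that \eqref{dualequation} gives the post-maximum law in terms of the \emph{dual} objects evaluated at the \emph{terminal} state $k$ and \emph{intermediate} state $j$, which is why the dual factor appears transposed after summation over $j$; verifying the normalisation of the exponential killing (that $\int_0^\infty q e^{-qt}\cdots$ really produces the stated resolvent, with no stray factor of $q$) and the justification of the $q\downarrow0$ limit (continuity of the matrix inverses, which holds since $\kappa^+(0,\pm\iu z)$ is invertible for $z\neq0$) are the remaining points requiring care.
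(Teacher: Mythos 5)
Your proposal follows essentially the same route as the paper's proof: sample over an independent exponential horizon, split at the maximum using Theorem~\ref{preWHF} together with the duality identity for the post-maximum piece, divide by $q$, let $q\downarrow0$ using the lemma on the limits $c_j=\lim_{q\downarrow0}\Phi^+_j(q,0)\hat\Phi^+_j(q,0)/q$, and invert the resulting matrix identity. The one point to tighten is that the strict positivity of the $c_j$ (needed to turn $\Delta_{c/\pi}$ into the invertible $\Delta_\pi^{-1}$ and to justify normalising $c_j=1$) cannot simply be quoted from Remark~\ref{c=1}, which is stated as a \emph{consequence} of this theorem; the paper instead deduces $c_j>0$ from the fact that all other matrices in the limiting identity \eqref{invertthis} are invertible while the left-hand side is too.
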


\begin{proof} We start by sampling $\xi$ over an independent and exponentially distributed time horizon denoted, as usual, by $\mathbf{e}_q$. By splitting at the maximum, applying duality and appealing to the identity (\ref{maxexp}), we have  for $\alpha\geq 0$
\begin{align*}
{E}^{0,i}\big[e^{ -\alpha\mathbf{e}_q +{\rm i}z\xi_{\mathbf{e}_q}} , J_{\mathbf{e}_q} = j \big]
&= \sum_{k\in E}{E}^{0,i}\big[e^{ -\alpha(\mathbf{e}_q - \overline{m}_{\mathbf{e}_q}  + \overline{m}_{\mathbf{e}_q}) +{\rm i}z\overline\xi_{\mathbf{e}_q}
} e^{{\rm i}z(\xi_{\mathbf{e}_q} - \overline{\xi}_{\mathbf{e}_q})}, J_{\overline{m}_{\mathbf{e}_q} } = k, \, J_{\mathbf{e}_q} = j\big]\\
&=\sum_{k\in E}{E}^{0,i}\big[e^{-\alpha \overline{m}_{\mathbf{e}_q} + {\rm i}z\overline\xi_{\mathbf{e}_q}
} , J_{\overline{m}_{\mathbf{e}_q} } = k\big]\frac{\pi_j}{\pi_k}\hat{{E}}^{0,j}\big[e^{-\alpha \overline{m}_{\mathbf{e}_q} -{\rm i}z\overline{\xi}_{\mathbf{e}_q}
} , J_{\overline{ m}_{\mathbf{e}_q} } = k\big]\\
&= \sum_{k\in E}
\Phi^+_k(q,0)[\kappa^+(q+\alpha,-{\rm i}z)^{-1}]_{i,k}\frac{\pi_j}{\pi_k}\hat\Phi^+_k(q,0)[\hat\kappa^+(q+\alpha,{\rm i}z)^{-1}]_{j,k}
\end{align*}
Noting that we can write the lefthand side above as 
$q[((q+\alpha)I - F({\rm i}z))^{-1}]_{i,j}$, we can divide by $q$ and take limits as $q\downarrow 0$ to find that
\[
[(\alpha I-F({\rm i}z))^{-1}]_{i,j} = \sum_{k\in E}c_k[\kappa^+(\alpha,-{\rm i}z)^{-1}]_{i,k}\frac{\pi_j}{\pi_k}[[\hat\kappa^+(\alpha,{\rm i}z)^{\rm T}]^{-1}]_{k,j},
\]
where we recall that the constants $c_k$, $k\in E$ were introduced in (\ref{timewh}). In matrix form, the above equality can be rewritten as 
\begin{equation}\label{invertthis}
(\alpha I-F({\rm i}z))^{-1} = \kappa^+(\alpha,-{\rm i}z)^{-1} \Delta_{c/\pi} [\hat\kappa^+(\alpha, {\rm i}z)^{\rm T}]^{-1}\Delta_\pi,
\end{equation}
where $\Delta_{c/\pi} = \text{diag}(c_1/\pi_1, \cdots c_{|E|}/\pi_{|E|} )$. Since all matrices are invertible except possibly $\Delta_{c/\pi}$ (on account of the fact that some of the constants $c_k$ may be zero), it follows that necessarily $c_k>0$ for all $k\in E$ and hence the matrix $\Delta_{c/\pi}$ is indeed invertible and is its inverse equal to $\Delta_{\pi/c}^{-1}$ (using obvious notation).
The proof is now completed by inverting the matrices on both left- and right-hand sides of (\ref{invertthis}) and noting that, without loss of generality, the constants $c_k$ may be taken as unity by choosing an appropriate normalisation of local time (which in turn means that the equality in  (\ref{timewh}) can be determined up to a multiplicative constant).
\end{proof}

A.5. {\bf Occupation formula.} The objective in this section is to use the preceding constructions to establish a key identity which is central to the analysis of real self-similar Markov processes in the main body of the text.  In order to state the main result, some more notation is needed. 

For $i,j\in E$ the potential measure $U^+_{i,j}$ on $[0,\infty)$ is defined by 
\begin{equation}
U^+_{i,j}(dx) ={E}^{0,i}\Big[\int_0^\infty \mathbf{1}_{(H^+_t \in dx, \, J^+_t= j)}\,dt\Big], \qquad x\geq 0.
\label{occupancyformula}
\end{equation}
Note that, for $\lambda> 0$,
\begin{equation}
\int_0^\infty e^{-\lambda x}U^+_{i,j}(dx)  
= \int_{0}^\infty{E}^{0,i} \big[e^{-\lambda H^+_t}, J^+_t = j\big]\,dt 
= [\kappa^+(0,\lambda)^{-1}]_{i,j}.
\label{LT}
\end{equation}
Moreover, it should also be noted that the non-lattice assumption on the process $\xi$ ensures that the measure $U^+_{i,j}$ is diffuse on $(0,\infty)$; see the discussion at the end of A.3 as well as the proof of Theorem 5.4  in \cite{kyprianou} in the L\'evy case for guidance.
We can define by analogy the measures $\hat{U}^+_{i,j}$, $i,j\in E$, for to the dual process $\hat\xi$. The reader might also want to recall the definitions of $\tau^-_0$ and $\tau^+_0$ from \eqref{lala}.

\begin{theorem}\label{bertoin-spitzer}
There exist non-negative constants $c_j$, $j\in E$, satisfying $\sum_{j\in E}c_j>0$ such that for all bounded measurable $f: \mathbb{R}\to [0,\infty)$ and $x>0$,
\[
{E}^{x,i}\Big[\int_0^{ {\tau}^-_0}f(\xi_t)\mathbf{1}_{(J_t =k)}\,dt\Big] = \sum_{j\in E}c_j\int_{y\in[0,\infty)}\int_{z\in[0,x]}U^+_{i,j}(dy)\hat{U}^+_{k,j}(dz) f(x+y -z).
\]
\end{theorem}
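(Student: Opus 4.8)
The plan is to establish this occupation identity by the standard route used for the analogous Bertoin--Spitzer result in the L\'evy case (see Chapter VI of \cite{Bertoin}), namely by splitting the path of $\xi$ run until $\tau^-_0$ at its maximum and invoking the Wiener--Hopf factorisation of Theorem \ref{WHF} together with the switching identities of Lemma \ref{doubledual}. First I would sample $\xi$ over an independent exponential horizon $\mathbf e_q$ and decompose its occupation measure on $[0,\infty)$ via the pre-maximum and post-maximum parts $(\overline\xi_{\mathbf e_q},\overline m_{\mathbf e_q})$ and $(\xi_{\mathbf e_q}-\overline\xi_{\mathbf e_q},\mathbf e_q-\overline m_{\mathbf e_q})$, which are conditionally independent given the chain at the maximum by the splitting-at-the-maximum analysis of A.4. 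Theorem \ref{preWHF} identifies the law of the pre-maximum part with the potential of $(\mathcal L^{-1},\mathcal H,J^+)$, and by duality (equation (\ref{dualequation})) the post-maximum part is an independent copy of the dual ascending process killed at an independent exponential rate.

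Next I would let $q\downarrow 0$. On the event that $\xi$ is sampled before it has gone below $0$, the resolvent decomposition becomes, in the limit, an identity expressing the occupation measure of $\xi$ on $[0,\infty)$ killed at $\tau^-_0$ as a convolution of the ascending ladder potential $U^+_{i,j}$ (contributing the running maximum, ranging over $[0,\infty)$) against the dual ascending ladder potential $\hat U^+_{k,j}$ (contributing the distance below the maximum, which under $P^{x,i}$ must stay in $[0,x]$ so that $\tau^-_0$ has not occurred). The spatial bookkeeping $\xi_t = \overline\xi_t - (\overline\xi_t - \xi_t)$ translates into the evaluation $f(x+y-z)$, with $y$ integrated against $U^+_{i,j}(dy)$ and $z\in[0,x]$ against $\hat U^+_{k,j}(dz)$; the constants $c_j$ are exactly the time-change constants $c_j=\lim_{q\downarrow 0}\Phi^+_j(q,0)\hat\Phi^+_j(q,0)/q$ from (\ref{timewh}), and Theorem \ref{WHF} guarantees $c_j>0$ and $\sum_j c_j>0$. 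To make the limit $q\downarrow 0$ rigorous I would first verify the identity with a resolvent density $\int_0^\infty e^{-qt}\,(\cdot)\,dt$ in place of the occupation up to $\tau^-_0$, using that $\{\mathbf e_q<\tau^-_0\}$ forces the post-maximum displacement below the level $x$, and then pass to the limit by monotone convergence since all integrands are non-negative.

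The main obstacle I anticipate is the precise matching of the modulating chain indices through the splitting and duality steps: under $P^{x,i}$ the chain starts in state $i$, runs up the ascending ladder structure to some state $j$ at the maximum, and the switching identity (Lemma \ref{doubledual}) then re-expresses the descent from the maximum in terms of the dual ladder process \emph{started} from $j$ and \emph{ending} in $k$ — so the dual potential that appears is $\hat U^+_{k,j}(dz)$ with indices in that order, not $\hat U^+_{j,k}$. Getting this bookkeeping right, and checking that the $\pi_j/\pi_k$ factors produced by (\ref{incrementduality}) cancel correctly against the normalisation implicit in defining $\hat U^+$ via $\hat\kappa^+$, is the delicate point; everything else is a routine transcription of the L\'evy argument with scalars replaced by the relevant matrix entries. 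A secondary technical point is justifying that one may genuinely take $q\downarrow0$ inside $[\kappa^+(q+\alpha,\cdot)^{-1}]$ etc., which follows from continuity of the matrix exponents together with the already-established existence of the limits in (\ref{timewh}).
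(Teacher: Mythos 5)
Your proposal is correct and follows essentially the same route as the paper's proof: sampling over an independent exponential horizon, splitting at the maximum, applying Theorem \ref{preWHF} and the duality identity (\ref{dualequation}) to obtain the product of pre- and post-maximum laws (with the index order $\hat U^+_{k,j}$ exactly as you describe), identifying the constants $c_j$ from (\ref{timewh}) in the limit $q\downarrow 0$ via continuity of Laplace transforms, and finally upgrading to general bounded measurable $f\geq 0$ by monotonicity. No gaps.
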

\begin{proof}
Start by noting that 
\begin{align}
&\quad\lefteqn{
 E^{x,i}\Big[\int_0^{ {\tau}^-_0}e^{-qt}f(\xi_t)\mathbf{1}_{(J_t =k)}dt\Big]}&\notag\\
 &=
\frac{1}{q}E^{x,i}\big[f(\xi_{\mathbf{e}_q})\mathbf{1}_{(J_{\mathbf{e}_q} =k)}, \mathbf{e}_q < {\tau}^-_0\big]
\notag\\
&=\frac{1}{q}\sum_{j\in E}{E}^{x,i}\big[f( \overline{\xi}_{\mathbf{e}_q}-(\overline{\xi}_{\mathbf{e}_q} -\xi_{\mathbf{e}_q} ))\mathbf{1}_{(J_{\overline{m}_{\mathbf{e}_q}} =j)}\mathbf{1}_{(J_{\mathbf{e}_q} =k)}, \mathbf{e}_q < {\tau}^-_0\big]
\notag\\
&=\int_{y\in[0,\infty)}\int_{z\in[0,x]} f(x+y -z) \sum_{j\in E}\frac{1}{q}{P}^{0,i}\big(\overline{\xi}_{\mathbf{e}_q} \in dy, J_{\overline{m}_{\mathbf{e}_q}}=j\big){P}^{0,i}\big(\overline{\xi}_{\mathbf{e}_q} - \xi_{\mathbf{e}_q} \in dz, J_{\overline{m}_{\mathbf{e}_q}} = j, \, J_{\mathbf{e}_q} =k\big)\notag\\
&=\int_{y\in[0,\infty)}\int_{z\in[0,x]} f(x+y -z) \sum_{j\in E}\frac{1}{q}{P}^{0,i}\big(\overline{\xi}_{\mathbf{e}_q} \in dy, J_{\overline{m}_{\mathbf{e}_q}}=j\big){P}^{0,k}\big(\overline{\hat\xi}_{\mathbf{e}_q} \in dz,  J_{\overline{\hat{m}}_{\mathbf{e}_q}} =j\big).
\label{product}
\end{align}
Next, with the help of (\ref{maxexp}),
\begin{align*}
&\quad\lefteqn{
\int_{[0,\infty)}e^{-\lambda y - \mu z}\sum_{j\in E}\frac{1}{q}{P}^{0,i}\big(\overline{\xi}_{\mathbf{e}_q} \in dy, J_{\overline{m}_{\mathbf{e}_q}}=j\big){P}^{0,k}\big(\overline{\hat\xi}_{\mathbf{e}_q} \in dz,  J_{\overline{\hat{m}}_{\mathbf{e}_q}} =j\big)}&&\\
&= \sum_{j\in E}\frac{\Phi^+_j(q,0)\hat\Phi^+_j(q,0)}{q}[\kappa^+(q,\lambda)^{-1}]_{i,j}[\hat\kappa^+(q,\mu)^{-1}]_{k,j},
\end{align*}
for $\lambda, \mu>0$.
Taking account of (\ref{LT}), it follows with the help of Lebesgue's Continuity Theorem for Laplace transforms that, in the vague sense, the product measure on the right-hand side of (\ref{product}) satisfies
\begin{align*}
\lim_{q\downarrow0} \sum_{j\in E}\frac{1}{q}{P}^{0,i}\big(\overline{\xi}_{\mathbf{e}_q} \in dy, J_{\overline{m}_{\mathbf{e}_q}}=j\big){P}^{0,k}\big(\overline{\hat\xi}_{\mathbf{e}_q} \in dz,  J_{\overline{\hat{m}}_{\mathbf{e}_q}} =j\big) = \sum_{j\in E}c_j U^+_{i,j}(dy)\hat{U}^+_{k,j}(dz).
\end{align*}
The result now follows for non-negative compactly supported, bounded measurable $f\geq 0$ and hence, appealing to standard monotonicity arguments, one can upgrade the result to deal with  bounded measurable $f\geq 0$.
\end{proof}

\bigskip

A.6. {\bf Markov Renewal theory.} 
The measures $U^+_{i,j}$ play an analogous role to the potential measure $U$ of the ascending ladder process for a L\'evy process, which can also be seen as a renewal measure. For example, using an analogue of the compensation formula for Cox processes, it is straightforward to deduce that, for $a,x>0$,  
\begin{align}\label{cts}
\begin{split}
&\quad P^{0,i}(\xi_{\tau^+_a}-a > x, J^+_{{\tau}^+_a} = j)\\
&=\int_{[0,a)} U^+_{i,j}(dy)n_j(\epsilon_\zeta >a-y+x, J_\zeta =j)
+ \sum_{k\neq j}\int_{[0,a)}q^+_{k,j}U^+_{i,k}(dy)(1-F^+_{k,j}(a-y +x)).
\end{split}
\end{align}
It is worth noting here that the fact that $U^+_{i,j}$ is diffuse on $(0,\infty)$ ensures that the right-hand side above is continuous in $x$.

There is a relatively wide body of literature concerning Markov additive renewal theory; see for example  \cite{lalley}, \cite{kesten} and \cite{Alsmeyer}. Although mostly dealt with for the case of discrete-time, we can nonetheless identify the following renewal-type theorem for the non-lattice measures $U^+_{i,j}$.
\begin{theorem}\label{MRT}
The family $\{\xi_{\tau^+_a}-a :a>0\}$ of overshoots converges in distribution under $P^{0,i}$ for every $i\in E$ if and only if $$E^{0,\pi}[H^+_1] : = \sum_{i\in E}\pi_i E^{0,i}[H^+_1]<\infty$$ and in that case the following hold:
\begin{itemize}
\item[(i)] For all $i,j\in E$, 
\[
\lim_{x\to\infty}\frac{U^+_{i,j}(x)}{x} = \frac{\pi_j}{E^{0,\pi}[H^+_1]}.
\]
\item[(ii)] In the spirit of the Key Renewal Theorem, for $\alpha>0$ and $i,j\in E$,
\[
\lim_{y\to\infty}\int_{[0,y]} e^{-\alpha(y -z)}{U}^+_{i,j}(dz) 
 = \frac{\pi_j}{\alpha E^{0,\pi}[H^+_1]}.
\] 
\item[(iii)]For $x>0$ and $i,j \in E$, 
\begin{align*}
\nu(dx,\{j\})&:=\wlim_{a\to\infty} P^{0,i}(\xi_{\tau^+_a}-a \in dx, J^+_{{\tau}^+_a} = j)\\
&= \frac{1}{E^{0,\pi}[H^+_1]}\Big[ \pi_jn_j(\epsilon_{\zeta} > x, J_\zeta =j) + \sum_{k\neq j}\pi_k q^+_{k,j} (1-F^+_{k,j}(x))\Big]dx,
\end{align*}
where   $F^+_{k,j}$ is the distribution whose Laplace transform is $G^+_{k,j}$.
\end{itemize}
For all limits above, we interpret the right-hand side as zero when $E^{0,\pi}[H^+_1]  = \infty$. In particular, this means that the overshoot distributions diverge to an atom at $+\infty$ and are not tight.
\end{theorem}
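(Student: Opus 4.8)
The plan is to read Theorem~\ref{MRT} as a Markov-additive renewal theorem for the ascending ladder process $(H^+,J^+)$ -- a (possibly killed) Markov additive subordinator whose modulating chain has intensity matrix $Q^+$ and the stationary law $\pi$ appearing in the statement -- and to break it into four pieces: (I) the potential measures $U^+_{i,j}$ grow linearly with the stated slope; (II) part~(ii) is then a corollary of~(I); (III) part~(iii) follows by feeding~(I) into the first-passage identity~\eqref{cts}; (IV) the infinite-mean case delivers the ``only if'' half.

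For~(I), I would start from the Laplace identity~\eqref{LT}, $\int_{[0,\infty)}e^{-\lambda x}U^+_{i,j}(dx)=[\kappa^+(0,\lambda)^{-1}]_{i,j}$ for $\lambda>0$, and expand the right-hand side as $\lambda\downarrow0$. In the transient regime (the only case in which the ``if'' direction has content) the reflected process returns to the origin, so $n_i(\zeta=\infty)=0$ and $\kappa^+(0,0)=-Q^+$; writing $\kappa^+(0,\lambda)=-Q^++\lambda\,\partial_\beta\kappa^+(0,0)+o(\lambda)$ and using the resolvent expansion of a generator near its zero eigenvalue gives $[\kappa^+(0,\lambda)^{-1}]_{i,j}\sim \ell_j/\lambda$, where $\ell_j=\pi_j/\big(\pi\,\partial_\beta\kappa^+(0,0)\mathbf 1\big)$ and, by the ergodic theorem for $(\bar L^{-1},H^+,J^+)$, the denominator is the a.s.\ drift $\mu^+:=\lim_{t\to\infty}H^+_t/t$. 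Karamata's Tauberian theorem then upgrades this to $U^+_{i,j}([0,x])\sim \ell_j x$. Since $\pi$ is invariant for $J^+$, integrating the state-dependent drift rates of $H^+$ against $\pi e^{Q^+ s}\equiv\pi$ shows $E^{0,\pi}[H^+_1]=\mu^+$, so $\ell_j=\pi_j/E^{0,\pi}[H^+_1]$, which is~(i). Alternatively -- and this is the route I expect to be the actual workhorse -- one passes to discrete time by sampling $(H^+,J^+)$ at the jump epochs of $J^+$ and invokes the Markov renewal theorems of~\cite{kesten,lalley,Alsmeyer}, the assumption~{\bf (NL)} (which by the discussion at the end of A.3 makes the jump kernels of $H^+$ diffuse) supplying the required non-arithmeticity.

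Step~(II) is then immediate: writing $e^{-\alpha(y-z)}$ as $\alpha$ times an integral and applying dominated convergence with the linear bound from~(I) turns $U^+_{i,j}([0,x])\sim \pi_j x/E^{0,\pi}[H^+_1]$ into the claimed limit $\pi_j/(\alpha E^{0,\pi}[H^+_1])$. For step~(III) I would substitute~(I) into~\eqref{cts}: the overshoot tail equals $\int_{[0,a)}U^+_{i,j}(dy)\,g_j(a-y)+\sum_{k\neq j}q^+_{k,j}\int_{[0,a)}U^+_{i,k}(dy)\,h_k(a-y)$ with $g_j(w)=n_j(\epsilon_\zeta>w+x,J_\zeta=j)$ and $h_k(w)=1-F^+_{k,j}(w+x)$, both non-increasing and -- precisely under $E^{0,\pi}[H^+_1]<\infty$ -- integrable on $[0,\infty)$, hence directly Riemann integrable. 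The key renewal theorem (the Markov renewal input of step~(I) in its ``averaging'' form) then gives convergence of the tail to $\frac{1}{E^{0,\pi}[H^+_1]}\big[\pi_j\int_x^\infty n_j(\epsilon_\zeta>v,J_\zeta=j)\,dv+\sum_{k\neq j}\pi_k q^+_{k,j}\int_x^\infty(1-F^+_{k,j}(v))\,dv\big]$, and differentiating in $x$ produces the density $\nu(dx,\{j\})$ in the stated form; its total mass is $1$ minus the creeping probability, accounting for a possible atom at $0$.

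Finally, for~(IV): if $E^{0,\pi}[H^+_1]=\infty$ -- in particular whenever $H^+$ is killed, i.e.\ in the recurrent regime where $H^+_1=\infty$ with positive probability -- the same Tauberian analysis gives $U^+_{i,j}([0,x])=o(x)$, and an infinite-mean renewal argument applied to~\eqref{cts}, using the Stone-type bound $\sup_a U^+_{i,j}([a,a+1])<\infty$ together with $U^+_{i,j}([a,a+1])\to0$ (the infinite-mean renewal theorem for Markov random walks, cf.~\cite{Alsmeyer}), shows $P^{0,i}(\xi_{\tau^+_a}-a\leq x,J^+_{\tau^+_a}=j)\to0$ for every $x>0$ and $j\in E$; hence the overshoots drift out to the atom at $+\infty$ and are not tight. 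Combined with~(III), this is the asserted equivalence. The part I expect to cost the most is precisely the renewal input: assembling a continuous-time Markov key renewal theorem for $(H^+,J^+)$ out of the essentially discrete-time literature, and carrying out the infinite-mean case cleanly (including the possibility that the ladder process never visits some states of $E$, which only makes the corresponding $U^+_{i,j}$ vanish). Once that is in hand, everything else is bookkeeping around~\eqref{LT} and~\eqref{cts}.
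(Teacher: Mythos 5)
Your proposal is sound, and it is worth saying up front that the paper offers essentially no proof of Theorem \ref{MRT}: the only justification given is the sentence following the statement, namely that parts (i) and (iii) are ``the continuous-time analogue of the Markov Additive Renewal Theorem in \cite{lalley}, whereas part (ii) is the continuous time analogue of the version \ldots in \cite{kesten}.'' Your secondary route --- passing to the discrete skeleton of $(H^+,J^+)$ at the jump epochs of $J^+$ and invoking \cite{kesten,lalley,Alsmeyer}, with {\bf (NL)} supplying the spread-out/non-arithmetic hypothesis via the diffuseness of the jump kernels established at the end of A.3 --- is therefore exactly the argument the authors have in mind, and your Tauberian expansion of $[\kappa^+(0,\lambda)^{-1}]_{i,j}$ around the zero eigenvalue of $-Q^+$ is a legitimate self-contained alternative for part (i) and for the ``only if'' direction (where it yields $U^+_{i,j}([0,x])=o(x)$).

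One step is stated more loosely than it should be: you cannot get (ii) from (i) by ``dominated convergence with the linear bound,'' because the integral $\int_{[0,y]}e^{-\alpha(y-z)}U^+_{i,j}(dz)$ is controlled by the \emph{increments} $U^+_{i,j}([y-h,y])$, and the Ces\`aro statement $U^+_{i,j}([0,y])\sim \ell_j y$ does not by itself imply the Blackwell-type limit $U^+_{i,j}([y-h,y])\to \ell_j h$ (the difference of two quantities each asymptotic to $\ell_j y$ is only $o(y)$). This is not a fatal gap --- the Markov renewal theorems you cite deliver the Blackwell/key-renewal form directly, and that is what you in fact use in step (III) --- but (ii) should be derived from that stronger input rather than from (i), and the Tauberian route alone does not suffice for (ii) and (iii).
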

Parts (i) and (iii) are the continuous-time analogue of the  Markov Additive Renewal Theorem in \cite{lalley}, whereas part (ii) is the continuous time analogue of the version of the Markov Additive Renewal Theorem in \cite{kesten}.
\bigskip

A.7. {\bf Harmonic functions} 
The main objective of this section is to prove a result which identifies a harmonic function for the process $(\xi, J)$ when killed on entering $(-\infty,0)\times E$. In the forthcoming analysis we use $\underline{L}_t$ to denote $\sum_{k\in E}\underline{L}^{(k)}_t$, the sum of local times of $\underline{\xi} - \xi$ at $(0,k)$, $k\in E$, where $\underline\xi_t = \inf_{s\leq t}\xi_s$, $t\geq 0$. Moreover,  similarly to previous sections in this Appendix, we work with $H^-_t : = \xi_{\underline{L}^{-1}_t}$ and  $J^-_t = J_{\underline{L}^{-1}_t}$, for all $t$ such that $\underline{L}^{-1}_t<\infty$ and otherwise the pair $H^-_t$ and $J^-_t$ are both assigned the value $\infty$. Furthermore, define 
\begin{align*}
U^-_i(x)= {E}^{0,i}\Big[\int_0^\infty\mathbf{1}_{(H^-_t \leq x)}dt\Big], \qquad x\geq 0,
\end{align*}
then the following theorem holds:

\begin{theorem}\label{hfunction}
For all $i\in E$ and $x> 0$, 
\[
U^-_{J_t}( \xi_t)\mathbf{1}_{(t<\tau^-_0)}, \qquad t\geq 0,
\]
is a ${P}^{x,i}$-martingale if and only if $\overline{\xi}-\xi$ is recurrent at zero; that is to say the Markov process $(\overline{\xi} - \xi , J)$ is recurrent at $(0,k)$ for some (and hence all) $k\in E$.
\end{theorem}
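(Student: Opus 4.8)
The plan is to read the claim as a harmonicity statement for $U^-$ with respect to the sub-Markov semigroup of $(\xi,J)$ killed on entering $(-\infty,0]$. By the Markov property and spatial homogeneity of $(\xi,J)$ in the first coordinate, $U^-_{J_t}(\xi_t)\mathbf{1}_{(t<\tau^-_0)}$ is a $P^{x,i}$-martingale for all $x>0$ and $i\in E$ if and only if
\[
E^{x,i}\bigl[U^-_{J_t}(\xi_t);\, t<\tau^-_0\bigr]=U^-_i(x)\qquad\text{for all }t\ge0,\ x>0,\ i\in E,
\]
integrability being harmless because $U^-$ grows at most linearly (Theorem \ref{MRT}). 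So I would aim at this identity and show it holds precisely when $\overline\xi-\xi$ is recurrent at $0$.

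The first ingredient is that $U^-$ is always \emph{excessive} for the killed MAP, i.e. the left-hand side above is always $\le U^-_i(x)$; equivalently $U^-_{J_{\cdot\wedge\tau^-_0}}(\xi_{\cdot\wedge\tau^-_0})$ is a non-negative supermartingale. I would derive this from the representation $U^-_i(x)=E^{x,i}[\underline L_{\tau^-_0}]$ (spatial homogeneity together with the interpretation of $U^-$ as the potential of the descending ladder height process) by decomposing $\underline L_{\tau^-_0}$ at time $t$ and using the strong Markov property at the first instant $\rho_t\ge t$ at which $\xi$ revisits its running infimum $\underline\xi_t$. Pushing the same bookkeeping a little further — comparing with the corresponding decomposition of $U^-_{J_t}(\xi_t)$ for a fresh copy started at $(\xi_t,J_t)$, and using that the modulating state reached at a first passage downward by a prescribed amount depends only on the starting state (Proposition \ref{p1}) — yields the exact \emph{balance identity}
\[
E^{x,i}\bigl[U^-_{J_t}(\xi_t);\,t<\tau^-_0\bigr]=U^-_i(x)+E^{x,i}\bigl[N_{t\wedge\tau^-_0}\bigr],\qquad N_s:=U^-_{J_s}(\xi_s-\underline\xi_s)-\underline L_s,
\]
with $N_0=0$. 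The martingale property is therefore equivalent to $E^{x,i}[N_{t\wedge\tau^-_0}]=0$ for every $t$, with $N$ expressed through the MAP $(\xi-\underline\xi,J)$ reflected at its infimum, for which $\underline L$ is the local time at $\{0\}\times E$.

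The crux is then that $U^-$ is a genuinely \emph{harmonic} (not merely excessive) function for the killed MAP on the interior $(0,\infty)\times E$ if and only if $(\xi,J)$ neither drifts to $-\infty$ nor is killed, i.e. iff $\overline\xi-\xi$ is recurrent at $0$; via the Wiener--Hopf factorisation (Theorem \ref{WHF}) and the identification of $\kappa^+(0,0)=\diag\bigl(n_1(\zeta=\infty),\dots\bigr)$ as the killing-rate matrix of the ascending ladder process, this is the same as saying $(\bar L^{-1},H^+,J^+)$ is not killed. Granting this, in the recurrent case interior harmonicity makes $U^-_{J_{\cdot\wedge\tau^-_0\wedge\tau^+_a}}(\xi_{\cdot\wedge\tau^-_0\wedge\tau^+_a})$ a \emph{bounded} martingale for every $a>0$; letting $a\uparrow\infty$ — legitimate since $\tau^+_a\uparrow\infty$ and, with the help of Theorem \ref{bertoin-spitzer}, the mass carried by paths sitting at high levels vanishes — and using that $U^-$ vanishes on the killing half-line, one recovers the full identity, hence the martingale property, so $E^{x,i}[N_{t\wedge\tau^-_0}]=0$. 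Conversely, if $\overline\xi-\xi$ is transient at $0$ then $\limsup\xi<\infty$, so $\tau^-_0<\infty$ almost surely, $\mathbf{1}_{(t<\tau^-_0)}\to0$, and the supermartingale $U^-_{J_t}(\xi_t)\mathbf{1}_{(t<\tau^-_0)}$ has $P^{x,i}$-limit strictly below $U^-_i(x)>0$ (its potential part is nonzero); a strict supermartingale is not a martingale, completing the equivalence.

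The step I expect to be the real obstacle is exactly the interior dichotomy — showing that in the recurrent regime the renewal function of the descending ladder is truly invariant for the process killed below $0$, not just excessive, and that this fails otherwise — together with carrying the modulating chain correctly through the ladder epochs in the balance identity and in the $a\uparrow\infty$ limit. This is where the Appendix's Wiener--Hopf factorisation enters (to convert "not killed" into $n_i(\zeta=\infty)=0$ for all $i$) and where the Markov renewal theorem is needed to control $U^-$ at infinity.
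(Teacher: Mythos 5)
Your overall strategy --- reduce to the invariance identity $E^{x,i}[U^-_{J_t}(\xi_t);\,t<\tau^-_0]=U^-_i(x)$, establish excessivity from the local-time representation $U^-_i(x)=E^{x,i}[\underline L_{\tau^-_0}]$, and then settle the dichotomy --- is a legitimate alternative to the paper's route, which instead proves the resolvent identity ${P}^{x,i}(\tau^-_0>\mathbf{e}_q,\,J_{\underline{m}_{\mathbf{e}_q}}=j)={}^{q}U^-_{i}(x)\,\Phi^-_j(q,0)$ and computes directly
\[
E^{x,i}\bigl[U^-_{J_t}(\xi_t);\,t<\tau^-_0\bigr]=U^-_i(x)-\lim_{q\downarrow0}\frac{q}{\sum_{j\in E}\Phi^-_j(q,0)}\int_0^t P^{x,i}(\tau^-_0>s)\,ds,
\]
so that the whole theorem reduces to deciding whether $\lim_{q\downarrow0}q/\sum_j\Phi^-_j(q,0)$ vanishes, which is exactly where the Wiener--Hopf constants $c_k$ and recurrence enter. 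However, your write-up has a genuine gap at its centre: the statement you call ``the crux'' --- that $U^-$ is invariant, and not merely excessive, for the killed semigroup precisely when $\overline\xi-\xi$ is recurrent at $0$ --- \emph{is} the theorem, and you do not prove it; you ``grant'' it. The Wiener--Hopf input you invoke only translates ``recurrent at $0$'' into ``$(\bar L^{-1},H^+,J^+)$ is unkilled'', which is essentially definitional and produces no interior harmonicity. Likewise your balance identity reduces the problem to $E^{x,i}[N_{t\wedge\tau^-_0}]=0$ with $N_s=U^-_{J_s}(\xi_s-\underline\xi_s)-\underline L_s$, but you never verify this (and as stated it already fails at $t=0$ unless $U^-_i(0)=0$, i.e.\ unless $0$ is regular for the lower half-line). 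The hard implication is asserted, not derived.

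The converse direction also does not go through as written. From transience of $\overline\xi-\xi$ at $0$ you correctly get $\tau^-_0<\infty$ a.s., hence $U^-_{J_t}(\xi_t)\mathbf{1}_{(t<\tau^-_0)}\to0$ a.s.; but a nonnegative \emph{martingale} may converge to $0$ almost surely while keeping constant expectation (exponential martingales do exactly this), so ``the a.s.\ limit is strictly below $U^-_i(x)$, hence strict supermartingale'' is not a valid inference, and ``its potential part is nonzero'' is precisely the claim requiring proof. One needs a quantitative lower bound on the defect $U^-_i(x)-E^{x,i}[U^-_{J_t}(\xi_t);t<\tau^-_0]$; in the paper this is supplied by the displayed formula above together with the observation that $\lim_{q\downarrow0}\Phi^-_k(q,0)/q$ is finite when $\Phi^+_k(0,0)>0$, via the relation $c_k=\lim_{q\downarrow0}\Phi^+_k(q,0)\hat\Phi^+_k(q,0)/q$. (A smaller issue: justifying integrability of $U^-_{J_t}(\xi_t)\mathbf{1}_{(t<\tau^-_0)}$ by ``$U^-$ grows at most linearly'' presumes a finite positive mean for $\xi_t$, which the theorem does not assume; in the paper finiteness falls out of the monotone-convergence computation itself.)
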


We start by proving a preliminary lemma giving us an important fluctuation identity.
To this end, define for $q>0$ the measure,
\begin{align*}
{ }^{q}U^-_{i,j}(dx)  = {E}^{0,i}\Big[\int_0^\infty e^{-q\underline{L}^{-1}_t}\mathbf{1}_{(H^-_t \in dx, \, J^-_t= j)}\,dt\Big],\qquad x\geq 0,
\end{align*}
and set
\begin{align*}
 {}^{q}U^-_{i}(x) =\sum_{j\in E}    {}^{q}U^-_{i,j}(x), \qquad x\geq 0.
\end{align*}
Recall that $\mathbf{e}_q$ denotes an independent exponentially distributed random variable with rate $q>0$ and $\tau^-_0 := \inf\{t> 0 : \xi_t<0\}.$
Let $\underline{n}_i$ be the excursion measures of $\underline{\xi}-\xi$ from the point $(0,i)$, $i\in E$. For convenience, let us assume that each of the subordinators $[\underline{L}^{(k)}]^{-1}_t$, $k\in E$ have no drift component. The corresponding forthcoming computation when this is not the case is a straightforward modification, e.g. in the spirit of, for example, the proof of Lemma VI.8 of \cite{Bertoin}. 

If we  mark the excursion  from the minimum indexed by local time $t>0$ with an independent exponentially distributed random variables, say $\mathbf{e}_q^{(t)}$, then using the compensation formula for the Cox process of excursions of $\underline{\xi} -\xi$ from 0, we have 
\begin{align*}
&\quad{
{P}^{x,i}\big( {\tau}^-_0>\mathbf{e}_q, J_{\underline{m}_{\mathbf{e}_q}} =j\big)
}\\
&={E}^{0,i}\Bigg[{E}^{0,i}\Big[\sum_{t\geq 0} \mathbf{1}_{(H^-_{t-}\leq x, \, \Delta \underline{L}^{-1}_s <\mathbf{e}^{(s)} \, \forall s<t)} \mathbf{1}_{(\Delta \underline{L}^{-1}_t >\mathbf{e}^{(t)} , \,J^-_t = j)}\,\Big|\, \sigma(J_u: u\geq 0) \Big]\Bigg]\\
&={E}^{i,0} \Big[ \int_0^\infty dt\cdot  
 \mathbf{1}_{(H^-_{t-}\leq x, \, \Delta \underline{L}^{-1}_s <\mathbf{e}^{(s)} \, \forall s<t)}
\Big]\underline{n}_j(\zeta >\mathbf{e}_q)\\
&={E}^{0,i} \Big[ \int_0^\infty dt\cdot  
 e^{-q\underline{L}^{-1}_t}\mathbf{1}_{(H^-_{t-}\leq x)}
\Big]\underline{n}_{j}(1-e^{-q\zeta})\\
&={}^{q}U^-_{i} (x)\Phi_{j}^-(q,0),
\end{align*}
where $\Phi_{j}^-(q,0): = \underline{n}_{j}(1-e^{-q\zeta})$ is a notational choice that, by analogy, respects the definition of $\Phi^+_j(\alpha, \beta)$ given in Section A.2. 
In the second and third equalities above, the letter $\zeta$ denotes the canonical excursion length. In conclusion, we have established the following lemma.

\begin{lemma} For all $i,j\in E$ and $x>0$,
\begin{equation}
{P}^{x,i}\big({\tau}_0^->\mathbf{e}_q, J_{\underline{m}_{\mathbf{e}_q}} =j\big)= {}^{q}U^-_{i}(x)\Phi_j^-(q, 0).
\label{q-harmonic}
\end{equation}
\end{lemma}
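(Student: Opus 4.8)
The plan is to read the event $\{\tau_0^- > \mathbf{e}_q\}$ off the excursion decomposition of $\underline{\xi} - \xi$ at its infimum and then apply a compensation formula, in the same spirit as the computation preceding the statement. I would first recall that $(\underline{\xi} - \xi, J)$ is a strong Markov process whose excursions away from the points $(0,i)$, $i \in E$, form a (possibly killed) Cox process directed by the local time $\underline{L}$ and with intensity measure $\underline{n}_i$ while the ladder chain $J^-$ occupies state $i$; here $\underline{L}^{-1}$ is the inverse local time and $H^-_t = \xi_{\underline{L}^{-1}_t}$, $J^-_t = J_{\underline{L}^{-1}_t}$ the (killed) descending ladder process. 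Following the classical splitting device, I would mark the excursion indexed by local time $t$ with an independent copy $\mathbf{e}_q^{(t)}$ of $\mathbf{e}_q$, so that, conditionally on the modulating chain, the excursions form a genuine Poisson point process on which the compensation formula is available.

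The heart of the argument is a combinatorial identification of the event in question. Started from $x > 0$, the path of $\xi$ stays nonnegative up to $\mathbf{e}_q$ precisely when: (i) writing $t$ for the local time of the excursion of $\underline{\xi} - \xi$ straddling $\mathbf{e}_q$, the descending ladder height $H^-_{t-}$ at the start of that excursion does not exceed $x$ — and one checks that the running infimum does not fall any further during the straddling excursion, so it is enough to impose the condition at $t-$; and (ii) $\mathbf{e}_q$ indeed lands inside the $t$-th excursion, which by memorylessness of the exponential law and Poisson thinning is equivalent to $\Delta\underline{L}^{-1}_s < \mathbf{e}_q^{(s)}$ for every $s < t$ together with $\Delta\underline{L}^{-1}_t > \mathbf{e}_q^{(t)}$. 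Adjoining the constraint $J^-_t = j$ (equivalently $J_{\underline{m}_{\mathbf{e}_q}} = j$), conditioning on $\sigma(J_u : u \ge 0)$ and applying the compensation formula to the resulting sum over $t$: the constraints on the earlier excursions produce the discount factor $\prod_{s < t} e^{-q\Delta\underline{L}^{-1}_s} = e^{-q\underline{L}^{-1}_{t-}}$, which under the standing no-drift assumption on the inverse local times coincides, for Lebesgue-almost every $t$, with $e^{-q\underline{L}^{-1}_t}$ (the drift case being a routine modification); the constraint on the straddling excursion contributes the factor $\underline{n}_j(1 - e^{-q\zeta}) = \Phi^-_j(q,0)$ and is independent of everything preceding it; and taking the outer expectation over $J$ gives, using spatial homogeneity in the first coordinate,
\[
P^{x,i}\big(\tau_0^- > \mathbf{e}_q,\ J_{\underline{m}_{\mathbf{e}_q}} = j\big) = E^{0,i}\Big[\int_0^\infty e^{-q\underline{L}^{-1}_t}\mathbf{1}_{(H^-_{t-} \le x)}\,dt\Big]\,\Phi^-_j(q,0) = {}^{q}U^-_i(x)\,\Phi^-_j(q,0),
\]
the final equality being the definition of ${}^{q}U^-_i$ (the sum of ${}^{q}U^-_{i,\cdot}$ over its second coordinate).

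I expect the only genuine obstacle to be the careful bookkeeping behind this identification: one must make sure the straddling excursion is counted exactly once, that the event where $\mathbf{e}_q$ lands precisely at a ladder point is negligible (or is absorbed into the drift term of $\underline{L}^{-1}$), and that all the quantities appearing are measurable with respect to the $\sigma$-field one conditions on, so that the compensation formula legitimately applies to a Poisson point process. Once that is in place, the analytic content is immediate: the discount and the $\Phi^-_j(q,0)$ factor drop out, and recognising the remaining potential as ${}^{q}U^-_i(x)$ completes the proof.
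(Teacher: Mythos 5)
Your argument is correct and is essentially identical to the paper's proof: the same marking of each excursion of $\underline{\xi}-\xi$ with an independent copy of $\mathbf{e}_q$, the same identification of the event $\{\tau_0^->\mathbf{e}_q,\, J_{\underline{m}_{\mathbf{e}_q}}=j\}$ through the excursion straddling $\mathbf{e}_q$, and the same application of the compensation formula conditionally on $\sigma(J_u:u\geq 0)$, producing the discount $e^{-q\underline{L}^{-1}_{t}}$ from the earlier excursions and the factor $\underline{n}_j(1-e^{-q\zeta})=\Phi^-_j(q,0)$ from the straddling one. The paper likewise works under the driftless assumption on the inverse local times and defers the drift case to a routine modification, so nothing is missing relative to its argument.
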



We now return to the proof of Theorem \ref{hfunction}.

\begin{proof}[Proof of Theorem \ref{hfunction}]
Thanks to the Markov property, it suffices to prove that, for all $i\in E$ and $x>0$,
\[
{E}^{x,i}[U^-_{J_t}( \xi_t)\mathbf{1}_{(t<{\tau}^-_0)}] = U^-_i(x).
\]
To proceed, we use ideas from \cite{CD} and Chapter 13 of \cite{kyprianou}. With the help of monotone convergence, we have that 
\begin{align*}
&\quad{E}^{x,i}[U^-_{J_t}(\xi_t),  \, t< {\tau}^-_0]\\ 
&= \lim_{q\downarrow 0}  {E}^{x,i}\Bigg[ \mathbf{1}_{(t< {\tau}^-_0)}\frac{{P}^{\xi_t,J_t}( {\tau}^-_0>\mathbf{e}_q)}{\sum_{j\in E}\Phi_j^-(q, 0) }\Bigg]\\
&= \lim_{q\downarrow 0} \frac{1}{\sum_{j\in E}\Phi_j^-(q, 0) } {P}^{x,i}\left[\left. {\tau}^-_0>\mathbf{e}_q\right|\mathbf{e}_q >t\right]\\
&=\lim_{q\downarrow 0} \left[e^{qt}\frac{ {P}^{x,i}( {\tau}_0^->\mathbf{e}_q)}{\sum_{j\in E}\Phi_j^-(q, 0) }
-e^{qt}\int_0^t q e^{-qs} \frac{ {P}^{x,i}( {\tau}^-_0>s)}{\sum_{j\in E}\Phi_j^-(q, 0) }
ds\right]\\
&=U^-_i(x) - \lim_{q\downarrow 0} \frac{q}{\sum_{j\in E}\Phi_j^-(q, 0) }\int_0^t    {P}^{x,i}( {\tau}_0^->s)ds.
\end{align*}
The proof is complete as soon as we can show that the limit preceding the integral term is equal to  zero. To this end, note that for each $j\in E$,
\[
\lim_{q\downarrow 0}\frac{1}{q} \Phi^-_j(q, 0) = \Phi^{-\prime}_j(0,0) = {E}^{0,k}\big[[\underline{L}^{(k)}]^{-1}_1\big]\in(0,\infty].
\]
We want to show that the expectation on the right-hand side above to be $+\infty$ as a consequence of the fact that $0$ is recurrent for $\overline\xi-\xi$. 
Appealing  to (\ref{maxexp}), we have 
\[
{E}^{0,i}\big[e^{- \alpha\underline{m}_{\mathbf{e}_q} },  J_{{m}_{\mathbf{e}_q}} = k\big] =\Phi^-_k(q,0)[ \Phi^-(q+\alpha,0)^{-1}]_{i,k}.
\]
Duality dictates that 
\[
{E}^{0,i}\big[e^{- \alpha\underline{m}_{\mathbf{e}_q} },  J_{{m}_{\mathbf{e}_q}} = k\big]  =\frac{\pi_k}{\pi_i}\hat{{E}}^{0,k}\big[e^{- \alpha\overline{m}_{\mathbf{e}_q} },  J_{\overline{m}_{\mathbf{e}_q}} = i\big], 
\]
which tells us that 
\[
\frac{1}{q}\Phi^-_k(q,0)[ \kappa^-(q+\alpha,0)^{-1}]_{i,k} = \frac{\pi_k}{\pi_i}\frac{1}{q}\hat\Phi^+_j(q,0)[ \hat\kappa^+(q+\alpha,0)^{-1}]_{i,k}.
\]
In turn, this  means that $\lim_{q\downarrow 0}\Phi^-_k(q, 0)/q$ and $\lim_{q\downarrow 0}\hat\Phi^+_k(q, 0)/q$ are simultaneously (in)finite. Note that both have limits because they are Bernstein functions.

Now recall from (\ref{timewh}) that, since, 
\begin{equation}
c_k:= \lim_{q\downarrow0}\frac{\Phi^+_k(q,0)\hat\Phi^+_k(q,0)}{q},
\label{choose=1}
\end{equation}
 it follows that $\lim_{q\downarrow 0}\Phi^-_k(q, 0) /q=\infty$ if  $\Phi^+_k(0,0) = 0$. However, the assumption that $\overline\xi-\xi$ is recurrent at $0$ ensures that $\Phi^+_k(0,0) = 0$ for all $k\in E$.

In conclusion, we have that, under the assumption that $\overline\xi-\xi$ is recurrent at 0,  the term 
\[
\lim_{q\downarrow 0} \frac{q}{\sum_{j\in E}\Phi_j^-(q, 0) } = 0,
\]
and subsequently the claim of the theorem is proved.
\end{proof}


\bigskip 

A.8. {\bf Conditioning to stay positive.} It turns out that the harmonic function $U^-_j(x)$, $j\in E$, $x>0$ corresponds to the $h$-function that appears in the Doob $h$-transform corresponding to the process $\xi$ conditioned to stay positive. 

Let  $A\in\mathcal{F}_t : = \sigma((\xi_s, J_s): s\leq t)$ and assume that 0 is recurrent for $\overline\xi -\xi$. 
 Appealing to the Markov and lack of memory properties, we have 
\[
\lim_{q\downarrow 0}  {P}^{x,i} (A, t<{\mathbf{e}_q } \,|\, {\tau}^-_0>{\mathbf{e}_q })
=
\lim_{q\downarrow 0} {E}^{x,i} \left[\mathbf{1}_{(A, \, t< {\tau}^-_0 < \mathbf{e}_q)}\frac{{P}^{\xi_t, J_t}  ({\tau}^-_0>{\mathbf{e}_q }) }{ {P}^{x,i} ({\tau}^-_0>{\mathbf{e}_q })} \right].
\]
Next  note that, for all $q<q_0$,
\[
\frac{{P}^{\xi_t, J_t}  ({\tau}^-_0>{\mathbf{e}_q }) }{ {P}^{x,i } ({\tau}^-_0>{\mathbf{e}_q })}  =\frac{{}^{q}U^-_{J_t}(\xi_t)}{{}^{q}U^-_{i}(x)}\leq \frac{{}^{q_0}U^-_{J_t}(\xi_t)}{U^-_{i}(x)}.
\]
Hence, by dominated convergence, we have that 
\begin{align*}
\lim_{q\downarrow 0}  {P}^{x,i} \big(A, t<{\mathbf{e}_q } \,\big|\, {\tau}^-_0>{\mathbf{e}_q }\big)
= {E}^{x,i} \Big[\mathbf{1}_{(A, \, t< {\tau}^-_0 )}\frac{U^-_{J_t}(\xi_t) }{ U^-_i(x)}\Big].
\end{align*}
In conclusion, we have the following theorem which confirms the existence of the law of $(\xi, J)$ with $\xi$ conditioned to stay positive.
\begin{theorem}
Suppose that $0$ is recurrent for $\overline\xi-\xi$. Then there exists a family of probability measures on the Skorokhod space, say $\mathbb{P}^\uparrow_{x,i}$, defined via the Doob $h$-transform
\[
\left.\frac{d{P}^{x,i,\uparrow}}{d {P}^{x,i}}\right|_{\mathcal{F}_t} =
\frac{U^-_{J_t}(\xi_t)}{U^-_i(x)}\mathbf{1}_{(t< {\tau}^-_0)}, \qquad t\geq 0, i\in E, x>0,
\]
such that, for all $A$ in $\mathcal{F}_t$, 
\[
{P}^{x,i,\uparrow}(A) = \lim_{q\downarrow 0}  {P}^{x,i} (A, t<{\mathbf{e}_q } \,|\, {\tau}^-_0>{\mathbf{e}_q }).
\]
\end{theorem}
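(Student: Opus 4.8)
The plan is to read the theorem off two facts that are already in place: the harmonicity statement of Theorem~\ref{hfunction}, which is what makes the Doob $h$-transform legitimate, and the computation carried out immediately before the statement, which already identifies the conditional limit. So the proof is essentially a matter of assembly, with one genuine measure-theoretic point to dispatch.

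First I would establish the existence of $\mathbb{P}^\uparrow_{x,i}$. By Theorem~\ref{hfunction}, under the hypothesis that $0$ is recurrent for $\overline\xi-\xi$, the process $M_t:=U^-_{J_t}(\xi_t)\mathbf{1}_{(t<\tau^-_0)}$, $t\ge 0$, is a nonnegative $P^{x,i}$-martingale; since $M_0=U^-_i(x)$ is deterministic this already presupposes $U^-_i(x)\in(0,\infty)$ (positivity is clear from the behaviour of $H^-$ near time $0$, and finiteness is the descending-ladder analogue of the at-most-linear growth of the potential recorded after Theorem~\ref{MRT}). Then $(M_t/U^-_i(x))_{t\ge 0}$ is a mean-one nonnegative $(\mathcal F_t)$-martingale, so the prescription $\left.\mathrm dP^{x,i,\uparrow}/\mathrm dP^{x,i}\right|_{\mathcal F_t}=M_t/U^-_i(x)$ defines a consistent (projective) family of probability measures along the filtration. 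Because the canonical space $\mathbf D$ of càdlàg paths carries $(\mathcal F_t)$ in the usual regular way, the standard path-space projective-limit construction (exactly as for L\'evy processes conditioned to stay positive) yields a unique probability measure $\mathbb{P}^\uparrow_{x,i}=P^{x,i,\uparrow}$ on $(\mathbf D,\mathcal F_\infty)$ restricting to these densities; this is the asserted family.

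For the limit identity, fix $A\in\mathcal F_t$ and $q>0$. Conditioning on $\mathcal F_t$ and using the Markov property together with the lack of memory of $\mathbf{e}_q$ (as in the discussion preceding the statement) gives
\[
P^{x,i}\!\left(A,\,t<\mathbf{e}_q \,\middle|\, \tau^-_0>\mathbf{e}_q\right)
= e^{-qt}\,E^{x,i}\!\left[\mathbf{1}_{(A,\,t<\tau^-_0)}\,\frac{P^{\xi_t,J_t}(\tau^-_0>\mathbf{e}_q)}{P^{x,i}(\tau^-_0>\mathbf{e}_q)}\right].
\]
Summing the fluctuation identity \eqref{q-harmonic} over $j\in E$ gives $P^{y,k}(\tau^-_0>\mathbf{e}_q)={}^{q}U^-_k(y)\sum_{j\in E}\Phi^-_j(q,0)$, so the quotient inside the expectation collapses to ${}^{q}U^-_{J_t}(\xi_t)/{}^{q}U^-_i(x)$. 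Letting $q\downarrow 0$, monotone convergence ($e^{-q\underline L^{-1}_s}\uparrow 1$) gives ${}^{q}U^-_k(y)\uparrow U^-_k(y)$ and $e^{-qt}\to 1$, while for $q\le 1$ one has the uniform bound ${}^{q}U^-_{J_t}(\xi_t)/{}^{q}U^-_i(x)\le U^-_{J_t}(\xi_t)/{}^{1}U^-_i(x)$, whose $P^{x,i}$-expectation against $\mathbf{1}_{(A,t<\tau^-_0)}$ is at most $U^-_i(x)/{}^{1}U^-_i(x)<\infty$ by the martingale property; dominated convergence then gives $\lim_{q\downarrow 0}P^{x,i}(A,t<\mathbf{e}_q\,|\,\tau^-_0>\mathbf{e}_q)=E^{x,i}[\mathbf{1}_{(A,t<\tau^-_0)}U^-_{J_t}(\xi_t)/U^-_i(x)]$, which is precisely $P^{x,i,\uparrow}(A)$ by construction, and since $\bigcup_t\mathcal F_t$ generates $\mathcal F_\infty$ this also pins down $\mathbb{P}^\uparrow_{x,i}$. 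Since the substantive inputs, Theorem~\ref{hfunction} and \eqref{q-harmonic}, are already available, the only step requiring care beyond citing established results is the extension of the projective family of Radon–Nikodym densities to an honest measure on $\mathcal F_\infty$ — the usual measure-theoretic underpinning of an $h$-transform on path space — and I expect that to be the sole non-routine point.
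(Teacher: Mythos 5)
Your proof is correct and follows essentially the same route as the paper, whose argument is precisely the discussion preceding the theorem statement: the lack-of-memory/Markov rewriting, the identification of the ratio $P^{\xi_t,J_t}(\tau^-_0>\mathbf{e}_q)/P^{x,i}(\tau^-_0>\mathbf{e}_q)={}^{q}U^-_{J_t}(\xi_t)/{}^{q}U^-_i(x)$ via \eqref{q-harmonic}, and dominated convergence using ${}^{q}U^-\uparrow U^-$ together with the martingale property from Theorem~\ref{hfunction}. Your dominating bound $U^-_{J_t}(\xi_t)/{}^{1}U^-_i(x)$ is in fact the correct form (the paper's displayed bound attaches $q_0$ to the wrong factor), and your remark on extending the consistent family of densities to a measure on path space makes explicit a step the paper leaves implicit.
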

\begin{remark}\label{dualcond}
	Setting 
	\begin{align}\label{db}
	U^+_i(x)= {E}^{0,i}\Big[\int_0^\infty\mathbf{1}_{(H^+_t \leq -x)}dt\Big],\quad x<0,
	\end{align} 
	the above discussion applied to the MAP $(-\xi,J)$ implies that $U^+_{J_t}( \xi_t)\mathbf{1}_{(t<\tau^+_0)}$ is a martingale and the $h$-transformed law 
	\begin{align*}
\left.\frac{d{P}^{x,i,\downarrow}}{d {P}^{x,i}}\right|_{\mathcal{F}_t} =
\frac{U^+_{J_t}(\xi_t)}{U^+_i(x)}\mathbf{1}_{(t<\tau^+_0)}, \qquad t\geq 0, i\in E, x<0,
\end{align*}

	  is the MAP conditioned to be negative.

\end{remark}

For the proof of Lemma \ref{lll} we shall need that conditioned MAPs tend to infinity. In the context of L\'evy processes many proofs exist for analogue of the next lemma. Those proofs are consequences of complicated pathwise constructions for the conditioned processes that we do not want to repeat for the setting of MAPs. Instead we give a simple argument based on potential calculations only. The argument is inspired by more explicit calculations for spectrally negative L\'evy processes in Lemma VII.12 of \cite{Bertoin}.

\begin{proposition}\label{cond}  For each $x<0$ and $i\in E$, we have that $P^{x,i, \downarrow}(\lim_{t\to-\infty}\xi_t =- \infty) =1$.
\end{proposition}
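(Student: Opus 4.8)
The plan is to work with the Doob $h$-transform description of $P^{x,i,\downarrow}$ from Remark~\ref{dualcond}: under $P^{x,i,\downarrow}$ the MAP $(\xi,J)$ never leaves $(-\infty,0)\times E$ (so $\tau^+_0=\infty$ a.s.), and for $A\in\mathcal F_t$,
\[
P^{x,i,\downarrow}(A)=\frac{1}{U^+_i(x)}\,E^{x,i}\bigl[U^+_{J_t}(\xi_t)\,\mathbf 1_{(t<\tau^+_0)}\,\mathbf 1_A\bigr].
\]
Here $U^+_k$ is nonnegative, strictly positive on $(-\infty,0)$, nondecreasing in $|\cdot|$, at most linearly growing (Theorem~\ref{MRT}) and satisfies $U^+_k(y)\to\infty$ as $y\to-\infty$; in particular $C_a:=\max_{k\in E}U^+_k(-a)=\max_{k\in E}\sup_{z\in[-a,0)}U^+_k(z)$ is finite for every $a>0$. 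Since $\xi_t<0$ for all $t$ under $P^{x,i,\downarrow}$, it suffices to prove $\limsup_{t\to\infty}\xi_t=-\infty$ almost surely, and for this it is enough to show that for each fixed $a>0$,
\[
P^{x,i,\downarrow}(g_a=\infty)=0,\qquad g_a:=\sup\{t\geq 0:\ \xi_t\geq -a\},
\]
because $g_a<\infty$ forces $\limsup_{t\to\infty}\xi_t\leq -a$, and letting $a\uparrow\infty$ then finishes.

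The first step will be an escape estimate: for every $a>0$, $j\in E$ and $y<-a$,
\[
P^{y,j,\downarrow}(\tau^+_{-a}<\infty)\ \leq\ \frac{C_a}{U^+_j(y)}.
\]
I would obtain this by applying the change of measure above on the event $\{\tau^+_{-a}\leq t\}\in\mathcal F_t$, using optional stopping of the martingale $U^+_{J_s}(\xi_s)\mathbf 1_{(s<\tau^+_0)}$ at the stopping time $\tau^+_{-a}$, and then letting $t\to\infty$: on $\{\tau^+_{-a}<\tau^+_0\}$ one has $\xi_{\tau^+_{-a}}\in[-a,0)$ so $U^+_{J_{\tau^+_{-a}}}(\xi_{\tau^+_{-a}})\leq C_a$, while on $\{\tau^+_{-a}=\tau^+_0\}$ the killed density vanishes.

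The second step combines this with the Markov property under $P^{x,i,\downarrow}$ at a deterministic time $t$. Since $\{\exists s\geq t:\xi_s\geq -a\}$ is the shift by $t$ of $\{\exists s\geq 0:\xi_s\geq -a\}$ and $P^{y,j,\downarrow}(\exists s\geq0:\xi_s\geq -a)=\mathbf 1_{(y\geq -a)}+\mathbf 1_{(y<-a)}P^{y,j,\downarrow}(\tau^+_{-a}<\infty)$,
\[
P^{x,i,\downarrow}(\exists\, s\geq t:\ \xi_s\geq -a)=P^{x,i,\downarrow}(\xi_t\geq -a)+E^{x,i,\downarrow}\bigl[\mathbf 1_{(\xi_t<-a)}\,P^{\xi_t,J_t,\downarrow}(\tau^+_{-a}<\infty)\bigr].
\]
I would bound both terms by returning to $P^{x,i}$. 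For the first, the Radon--Nikodym density at time $t$ is $U^+_{J_t}(\xi_t)\mathbf 1_{(t<\tau^+_0)}/U^+_i(x)$, and on $\{\xi_t\geq -a,\ t<\tau^+_0\}=\{\xi_t\in[-a,0),\ t<\tau^+_0\}$ it is at most $C_a/U^+_i(x)$, so $P^{x,i,\downarrow}(\xi_t\geq -a)\leq \tfrac{C_a}{U^+_i(x)}\,P^{x,i}(t<\tau^+_0)$. For the second, the escape estimate bounds it by $E^{x,i,\downarrow}\bigl[\mathbf 1_{(\xi_t<-a)}C_a/U^+_{J_t}(\xi_t)\bigr]$, and here the factor $1/U^+_{J_t}(\xi_t)$ exactly cancels the density, leaving $\tfrac{C_a}{U^+_i(x)}\,P^{x,i}(\xi_t<-a,\ t<\tau^+_0)\leq \tfrac{C_a}{U^+_i(x)}\,P^{x,i}(t<\tau^+_0)$. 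Since $(\xi,J)$ oscillates (in particular does not drift to $-\infty$), $\tau^+_0<\infty$ $P^{x,i}$-a.s., so $P^{x,i}(t<\tau^+_0)\to 0$ as $t\to\infty$. Hence $P^{x,i,\downarrow}(\exists s\geq t:\xi_s\geq -a)\leq \tfrac{2C_a}{U^+_i(x)}P^{x,i}(t<\tau^+_0)\to 0$, and as these events decrease in $t$ this yields $P^{x,i,\downarrow}(g_a=\infty)=0$, which completes the argument.

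The only genuinely delicate point is the justification of the optional-stopping form of the $h$-transform used in the escape estimate -- that the nonnegative martingale $U^+_{J_s}(\xi_s)\mathbf 1_{(s<\tau^+_0)}$ may be evaluated at $\tau^+_{-a}$ -- which is standard but should be carried out by truncating at a deterministic time and passing to the limit, taking care of the boundary event $\{\tau^+_{-a}=\tau^+_0\}$. Everything else is elementary once the monotonicity and linear-growth of $U^+_k$ recorded in Theorem~\ref{MRT} are in hand.
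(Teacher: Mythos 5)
Your argument is correct, and it reaches the conclusion by a genuinely different route from the paper's in its second half. The escape estimate in your first step is exactly the paper's bound \eqref{cb}: both arguments evaluate the $h$-transform at the first entry into $[-a,0)$, discard the boundary event where $\xi$ jumps straight over $[-a,0)$ (where the killed density vanishes), and use monotonicity plus local finiteness of $U^+$. The divergence is in how returns to $[-a,0)$ at large times are ruled out. The paper first proves that the conditioned process passes below every level $z$ almost surely (its \eqref{ca}), by showing via the occupation identity \eqref{leif} and Theorem \ref{bertoin-spitzer} that the expected time spent in $[z,0]$ is finite, and then applies the strong Markov property at $\tau^-_z$ together with \eqref{cb}. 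You instead apply the Markov property at a deterministic time $t$ and exploit the exact cancellation of $U^+_{J_t}(\xi_t)$ against the Radon--Nikodym density, reducing everything to $P^{x,i}(t<\tau^+_0)\to 0$. This buys a more elementary proof: you never need the occupation formula, the linear growth of $U^+$ from Theorem \ref{MRT}, or the potential measure of the killed process --- local finiteness and monotonicity of $U^+$ suffice. The price is the hypothesis that $\tau^+_0<\infty$ almost surely under the unconditioned law, i.e.\ that $(\xi,J)$ oscillates; the paper's route also covers a MAP drifting to $-\infty$, for which $P^{x,i}(t<\tau^+_0)$ does not vanish and your deterministic-time bound gives nothing. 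Since the proposition is invoked only in Lemma \ref{lll}, where oscillation is assumed, nothing is lost for the application, but you should state the oscillation hypothesis explicitly, as the proposition itself does not carry it. The optional stopping step you flag as delicate is indeed the only technical point, and your plan of truncating at deterministic times and passing to the limit handles it correctly.
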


\begin{proof}
First note that, for all $z<x<0$ and $i\in E$, with the help of \eqref{ddd},
\begin{align}
	E^{x,i, \downarrow}\left[\int_0^\infty \mathbf{1}_{(\xi_t \geq z)}dt\right]
	=U^\downarrow((x,i),([z,0],E))
	=\sum_{j\in E}
	\int_{[z,0]} \frac{{U}^+_j(y)}{{U}^+_i(x)}U^\dagger((x,i), (dy,\{j\})),
	\label{leif}
\end{align}
where $U^\dagger((x,i), (dy,\{j\}))$ is the potential measure of the process $(\xi, J)$ killed when $\xi$ first enters  $(0,\infty)$. Since $U^+$ is locally bounded the righthand side can be estimated from above by $\frac{C}{U^+(x)} U^\dag((x,i),([0,z],E))$ which is finite by Theorem \ref{bertoin-spitzer} applied with $f\equiv 1$ and using the local boundedness of the appearing potential measures of the ladder processes. This implies that 
\begin{align}\label{ca}
P^{x,i,\downarrow}(\tau^-_z<\infty)=1,\quad \text{for all }z<x<0, i\in E.
\end{align}
 Otherwise, the trajectory of $\xi$ is bounded from below by $z$ with positive probability under $P^{x,i, \downarrow}$ and, hence, $\int_0^\infty\mathbf{1}_{(\xi_t \geq z)}dt=\infty$ with positive probability. But then the left-hand side of \eqref{leif} would be infinite, giving a contradiction.

\medskip

Next, we show that
\begin{align}\label{cb}
\lim_{z\to-\infty}P^{z,i,\downarrow}(\xi_t < a \text{ for all }t\geq 0) =1,\quad \text{for all } a<0, i\in E.
\end{align}
To see this, define $\tau_{[a,0]} = \inf\{t>0: \xi_t \in[a,0]\}$. Use the change of measure in Remark \ref{dualcond} to note that, for $z<a$,
\begin{align*}
P^{z,i,\downarrow}(\text{there is }t\geq 0\text{ such that }\xi_t \geq a)
&=E^{z,i,\downarrow}[\mathbf 1_{(\tau_{[a,0]}<\infty)}]\\
&=E^{z,i,\dag}\left[\frac{U^+_{J_{\tau_{[a,0]}}} (\xi_{\tau_{[a,0]}})}{U^+_i(z)} \mathbf{1}_{(\tau_{[0,a]} < \tau^+_0)}\mathbf 1_{(\tau_{[0,a]}<\infty)}\right].
\end{align*}
Using the monotonicity of $z\mapsto U^+_i(z)$ from the definition \eqref{db} the right-hand side can be bounded from above by
\begin{align*}
	\frac{\max_{j\in E}U^+_j(a)}{U^+_i(z)}P^{z,i,\dag}(\tau_{[0,a]} < \tau^+_0).
\end{align*}
Finally, since $\lim_{z\to -\infty} U^+_i(z)=+\infty$, \eqref{cb} is proved.
\smallskip

The claim of the proposition now follows from the strong Markov property applied to $\tau^-_z$, which is finite by \eqref{ca}, and \eqref{cb}.
\end{proof}

A.9. {\bf Laws of large numbers.} Similarly to the case of L\'evy process, it is known that  a MAP $(\xi,J)$ grows linearly, meaning that 
\begin{align}\label{mmm}
\lim_{t\to\infty}\frac{\xi_t}{t} = E^{0,\pi}[\xi_1]
\end{align}
provided
\begin{align*}
E^{0,\pi}[\xi_1] = \sum_{i\in E}\pi_i E^{0,i}[\xi_1]
\end{align*}
is defined. Moreover, when $E^{0,\pi}[\xi_1]$ is defined there is a trichotomy which dictates whether $(\xi,J)$ drifts to $+\infty$, $-\infty$ or oscillates accordingly as $E^{0,\pi}[\xi_1]>0$, $<0$ or $=0$, respectively.
See for example Chapter XI of \cite{Asmussen}.\smallskip

We fix a state $k\in E$ and consider the MAP at the discrete set of return times of $J$ to $k$.  Let $\sigma_0=\inf\{t\geq 0\colon J_t=k\}$ and inductively define, for $n\in\N$,
\begin{align}\label{eq1311-1}
\sigma_{n+1}=\inf\{t> \sigma_{n} \colon J_t=k \text{ and }\exists s\in(\sigma_{n-1},t)\text{ with } J_s\not=k\}.
\end{align}
The skeleton $(\xi_{\sigma_n})_{n\in\N_0}$ is a Markov chain. The following theorem relates the law of large numbers to moments of the underlying L\'evy processes and transition jumps appearing in Proposition \ref{p1} and gives an identity that is crucial for the next section.

\begin{theorem}\label{theo1311-1}
The following statements are equivalent for a MAP $(\xi,J)$:
\begin{enumerate}
\item[(i)] $\xi_1$ has finite absolute mean for one (any) starting distribution with $\xi_0=0$.
\item[(ii)] $\xi_{\sigma_1}$ has finite absolute mean when started in $(0,k)$.
\item[(iii)] The L\'evy processes $\xi^i$ have finite absolute moment and any $\Delta_{i,j}$ with $q_{i,j}>0$ has finite absolute moment.
\item[(iv)] $\lim_{t\to\infty} \frac{\xi_t}{t}$ exists almost surely for one (any) starting distribution.
\end{enumerate}
Under (i) to (iv) we have
\begin{align}\label{nn}
	\lim_{t\to\infty} \frac{\xi_t}{t}= E^{0,\pi}[\xi_1]= \frac{E^{0,k}[\xi_{\sigma_1}]}{E^{0,k}[\sigma_1]},\quad k\in E.
\end{align}
\end{theorem}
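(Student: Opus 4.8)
The plan is to prove the chain of equivalences (i)$\Leftrightarrow$(ii)$\Leftrightarrow$(iii)$\Leftrightarrow$(iv) together with the identity \eqref{nn}, using the decomposition of the MAP over excursions of $J$ away from the fixed state $k$ and classical renewal-reward theory. First I would establish (ii)$\Leftrightarrow$(iii). On the random time interval $[\sigma_n,\sigma_{n+1})$ the increment $\xi_{\sigma_{n+1}}-\xi_{\sigma_n}$ is, by Proposition~\ref{p1} and the strong Markov property of $(\xi,J)$, a sum of a geometric-type number of independent pieces: increments of the pure-state L\'evy processes $\xi^i$ run for the (finite-mean, since $E$ is finite and irreducible) holding times of $J$, plus the transitional jumps $\Delta_{i,j}$ accrued along the excursion. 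A Wald-type computation shows that $\xi_{\sigma_1}$ has finite absolute mean if and only if each $\xi^i_1$ and each $\Delta_{i,j}$ with $q_{i,j}>0$ does; the forward direction uses that every state $j$ is visited with positive probability during an excursion from $k$ (irreducibility), so finiteness of $E^{0,k}[|\xi_{\sigma_1}|]$ forces finiteness of the mean of each constituent, and the backward direction is a direct triangle-inequality bound on the excursion sum. This simultaneously identifies $E^{0,k}[\xi_{\sigma_1}]$ as a finite linear combination of the $E[\xi^i_1]$ and $E[\Delta_{i,j}]$ weighted by visit frequencies.

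Next I would use the renewal structure of the return times $(\sigma_n)$ to pass between the discrete skeleton and the continuous process. The pairs $(\sigma_{n+1}-\sigma_n,\ \xi_{\sigma_{n+1}}-\xi_{\sigma_n})$ are i.i.d. under $P^{0,k}$ (for $n\ge 1$), with $E^{0,k}[\sigma_1]\in(0,\infty)$ automatically. The strong law of large numbers for i.i.d. sums then gives $\xi_{\sigma_n}/n\to E^{0,k}[\xi_{\sigma_1}]$ and $\sigma_n/n\to E^{0,k}[\sigma_1]$ a.s.\ under (ii)/(iii). Controlling the fluctuation of $\xi$ on a single excursion interval — i.e.\ showing $\sup_{\sigma_n\le t<\sigma_{n+1}}|\xi_t-\xi_{\sigma_n}|/n\to 0$ a.s.\ — follows from a Borel--Cantelli argument: the within-excursion oscillation has finite mean (same Wald bound as above), hence $o(n)$ almost surely. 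Dividing and using $N(t):=\max\{n:\sigma_n\le t\}$ with $N(t)/t\to 1/E^{0,k}[\sigma_1]$ yields \eqref{mmm} in the form $\xi_t/t\to E^{0,k}[\xi_{\sigma_1}]/E^{0,k}[\sigma_1]$, which is statement (iv) together with the second equality in \eqref{nn}. To get the first equality $E^{0,\pi}[\xi_1]=E^{0,k}[\xi_{\sigma_1}]/E^{0,k}[\sigma_1]$, I would invoke the ergodic theorem for the stationary, ergodic increment process: under $P^{0,\pi}$ the pair $(\xi_t,J_t)$ has stationary increments when $J_0\sim\pi$, so $\xi_t/t\to E^{0,\pi}[\xi_1]$ a.s., and comparing the two limits (which must agree, being the same $t\to\infty$ limit of $\xi_t/t$ up to the irrelevant initial segment) forces the identity; alternatively one checks it directly by computing $E^{0,\pi}[\xi_1]$ via Proposition~\ref{p1} and the cycle formula for $\pi$.

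Finally, (i)$\Leftrightarrow$(iii) is the same Wald/triangle-inequality estimate applied on the \emph{deterministic} interval $[0,1]$ rather than on an excursion: $\xi_1$ under $P^{0,i}$ is built from the $\xi^j$-increments over the (finite, bounded by a Poisson number with the holding rates) transitions of $J$ in $[0,1]$, plus finitely many $\Delta$-jumps, so its absolute mean is finite iff each constituent's is; the starting-distribution independence is immediate since switching the initial state only prepends a piece of one more pure-state L\'evy process and possibly one transitional jump. For (iv)$\Rightarrow$(i), note that if $\xi_t/t$ converges a.s.\ then in particular $\xi_n/n$ converges, and writing $\xi_n$ as a sum of $n$ i.i.d.\ one-unit increments (under $P^{0,\pi}$, or after one cycle under $P^{0,k}$), the Kolmogorov converse to the SLLN forces $E|\xi_1|<\infty$. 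The main obstacle I anticipate is the bookkeeping in the Wald-type estimates: one must handle the fact that the number of $J$-transitions within an interval is random and correlated with which pure-state L\'evy processes are being run, so the cleanest route is to condition on the full trajectory of $J$ first, bound everything conditionally by sums with a deterministic (in terms of the transition count) number of terms, and only then take expectations using that $J$'s transition counts have exponential moments. Everything else is routine renewal-reward and SLLN machinery.
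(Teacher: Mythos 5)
Your overall strategy is the same as the paper's: decompose $\xi$ over the pure-state L\'evy pieces and transition jumps of Proposition~\ref{p1}, pass to the i.i.d.\ skeleton $(\xi_{\sigma_n})$ at return times to $k$, use Wald/triangle-inequality bounds for the moment equivalences, and combine the SLLN for the skeleton with control of the within-excursion oscillation $E^{0,k}[\sup_{t\le\sigma_1}|\xi_t|]<\infty$ to get (iv) and \eqref{nn}. Most of this is sound. However, your argument for (iv)$\Rightarrow$(i) has a genuine flaw: you write $\xi_n$ as a sum of ``$n$ i.i.d.\ one-unit increments under $P^{0,\pi}$'' and invoke the Kolmogorov converse to the SLLN. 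The unit increments of a MAP are \emph{not} independent --- they are only stationary under $P^{0,\pi}$, being correlated through the trajectory of $J$ --- and the converse to the SLLN fails for stationary ergodic sequences (Ces\`aro averages can converge a.s.\ without integrability of the summands). The same objection applies to ``after one cycle under $P^{0,k}$''. The correct route, which is the one the paper takes and which your own setup already supports, is to apply the converse SLLN to the genuinely i.i.d.\ increments $\xi_{\sigma_{\ell}}-\xi_{\sigma_{\ell-1}}$: existence of $\lim_t \xi_t/t$ together with $\sigma_n/n\to E^{0,k}[\sigma_1]<\infty$ forces $\lim_n \frac1n\sum_{\ell\le n}(\xi_{\sigma_\ell}-\xi_{\sigma_{\ell-1}})$ to exist and be finite, which for an i.i.d.\ sum is equivalent to $E^{0,k}[|\xi_{\sigma_1}|]<\infty$, i.e.\ (ii), and then (i) follows from the already-established equivalences.

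A second, more minor point: in the ``only if'' directions (i)$\Rightarrow$(iii) and (ii)$\Rightarrow$(iii) you assert that finiteness of $E[|\xi_1|]$ or $E^{0,k}[|\xi_{\sigma_1}|]$ ``forces finiteness of the mean of each constituent''. This is not automatic from conditioning on $J$ alone, because of possible cancellation among the summands; what you need (and what the paper makes explicit) is the fact that for \emph{independent} $X,Y$, $E[|X+Y|]<\infty$ implies $E[|X|]<\infty$ and $E[|Y|]<\infty$ --- concretely, one integrates out $Y$ to find some $x$ with $E[|X+x|]<\infty$. The paper implements this by restricting to the positive-probability events ``no jump of $J$ before time $1$'', ``exactly one jump before time $1$'', and ``$k\to j\to k$'', where the constituents are independent. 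Your conditioning-on-$J$ remark points in the right direction, so this is a presentational gap rather than a wrong idea, but it is the crux of those two implications and should be spelled out.
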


\begin{proof}  Throughout the proof, we shall use the fact that for any L\'evy process $\{\eta_t :t\geq 0\}$
\begin{align*}
	E[|\eta_s|]<\infty\text{ for some }s>0\quad &\Longleftrightarrow \quad E[|\eta_t|]<\infty\text{ for all }t\geq 0\\
	&\Longleftrightarrow \quad E[\sup_{s\leq t}|\eta_s|]<\infty\text{ for all }t\geq 0.
\end{align*} See Theorem 25.18 of Sato \cite{Sato} for a proof.\\
\smallskip

\underline{(iii) $\Rightarrow$ (i):}
Note that for a fixed distribution $P^{0,\mu}$, by Proposition \ref{p1}, the distribution of $\xi_1$ is identical to the law of
\begin{align}\label{eq0311-1}
\sum_{i\in E} \xi^i_{t_i(1)} + \sum_{i\not= j} \sum_{\ell=1}^{n_{i,j} (1)} \Delta_{i,j}^{\ell},
\end{align}
where $t_i(1)$ denotes the time $J$ spends in state $i$, and $n_{i,j}(1)$  the number of jumps of $J$ from $i$ to $j$ over the time interval $[0,1]$ and, for each $i,j\in E$ such that $i\neq j$,  $\{\Delta_{i,j}^\ell : \ell\geq 1\}$ are iid copies of $\Delta_{i,j}$. Since the expected number of total jumps is finite,  the triangle inequality  shows that (iii) implies (i).\\ \smallskip

\underline{(i) $\Rightarrow$ (iii):} By considering the event that the first jump away from the initial state $i\in E$ occurs after time $1$, we have that $E^{0,i}[|\xi_1|]\geq E^{0,i}[|\xi^i_1|]\exp\{-|q_{i,i}|\}$, thereby showing that each of the pure-state L\'evy processes $\xi^i$, $i\in E$, have finite absolute moment. Now  consider the event that the first jump of the Markov chain $J$  occurs before time 1 and the second jump occurs after time $1$. In that case, we have 
\[
 \int_0^1|q_{i,i}|e^{-|q_{i,i}| t} \sum_{j\neq i}\frac{q_{i,j}}{|q_{i,i}|}e^{-|q_{j,j}| (1-t)} E^{0,i}[|\xi^i_t + \Delta_{i,j}+\xi^j_{1-t}| ]
dt<E^{0,i}[|\xi_1|]<\infty.
\]
This tells us that for each $j\in E$,  Lebesgue almost everywhere in $[0,1]$, 
\begin{equation}
E^{0,i}[|\xi^i_t + \Delta_{i,j}+\xi^j_{1-t}| ]
<\infty.
\label{(train)}
\end{equation} 
For a given $j\in E$ with $j\neq i$, fix such   a $t\in[0,1]$ and note that 
\begin{eqnarray*}
E^{0,i}[|\Delta_{i,j}| ] &=& E^{0,i}[|\xi^i_t + \Delta_{i,j}+\xi^j_{1-t} -\xi^i_t -  \xi^j_{1-t} | ]\\
&\leq& 
E^{0,i}[|\xi^i_t + \Delta_{i,j}+\xi^j_{1-t}| ]+ E^{0,i}[|\xi^i_t|] + E^{0,i}[|\xi^j_{1-t} |] \\
&<&\infty,
\end{eqnarray*}
where the final inequality follows by \eqref{(train)}, the previously established fact that $E^{0,i}[|\xi^i_{1} |]<\infty$ for $i\in E$ and the opening remark at the beginning of this proof.\\

\smallskip

\underline{(i) $\Rightarrow$ (ii):} We can identify the distribution of $\xi_{\sigma_1}$ with that of
\begin{eqnarray}
\sum_{i\in E} \xi^i_{t_i(\sigma_1)} + \sum_{i\not= j} \sum_{\ell=1}^{n_{i,j} (\sigma_1)} \Delta_{i,j}^{\ell},
\label{tired}
\end{eqnarray}
where $t_i(\sigma_1)$ denotes the time $J$ spends in state $i$, and $n_{i,j}(\sigma_1)$  the number of jumps of $J$ from $i$ to $j$ over the time interval $[0,\sigma_1]$ and, for each $i,j\in E$ such that $i\neq j$,  $\{\Delta_{i,j}^\ell : \ell\geq 1\}$ are iid copies of $\Delta_{i,j}$ (also independent of $n_{i,j}(\sigma_1)$, which depends only on the chain $J$). Note that $t_i(\sigma_1)$ is a random sum of an independent, geometrically distributed number of  independent exponential random variables that depend only on $J$, so that $E^{0,k}[|\xi^i_{t_i(\sigma_1)}|]<\infty$ whenever (iii) holds.
Having already shown the equivalence of (i) and (iii), it follows from the triangle inequality and the distributional equivalence in (\ref{tired}) that (i) implies (ii).\\ \smallskip
 
 \underline{(ii) $\Rightarrow$ (iii):}
  On the event that the sojourn of $J$ from $k$ consists of a first jump from $k$ to $j\neq k$, followed by a jump back to $k$, written $\{k\rightarrow j\rightarrow k\}$, we can write
\[
\xi_{\sigma_1}  = \xi^{k}_{\mathbf{e}_{|q_{k,k}|}} + \Delta_{k,j} + \xi^j_{\mathbf{e}_{|q_{j,j} |}} + \Delta_{j,k},
\]
where, for $i\in E$, $\mathbf{e}_{|q_{i,i}|}$ is an independent exponentially distributed random variable with rate $|q_{i,i}|$ and the sum on the right-hand side above consists of four independent random variables. This means that 
\begin{align}\label{ell}
\infty> E^{0,k}[|\xi_{\sigma_1}|] \geq E^{0,k}[|\xi_{\sigma_1}|\mathbf{1}_{\{k\rightarrow j\rightarrow k\}}]
=\E[|\xi^{k}_{\mathbf{e}_{|q_{k,k}|}} + \Delta_{k,j} + \xi^j_{\mathbf{e}_{|q_{j,j} |}} + \Delta_{j,k}  |]
\end{align}
if we denote by $\E$ the product space of the two L\'evy processes, two transition jumps and two exponential variables.\\
From the aforesaid independence we can deduce (iii). As a first step integrate out the final three summands on the righthand side of \eqref{ell}:
\begin{align*}
E^{0,k}[|\xi_{\sigma_1}|\mathbf{1}_{\{k\rightarrow j\rightarrow k\}}]
= \int_\R \int_0^\infty \int_0^\infty  \E[|\xi^k_{\mathbf{e}_{|q_{k,k}|}}+a+b+c|]|\P(\Delta_{k,j} \in da,\xi^j_{\mathbf{e}_{|q_{j,j} |}}\in db,\Delta_{j,k}\in dc).
\end{align*}
The left-hand side is finite and non-zero so there is some $x\in \R$ with $\E[|\xi^k_{\mathbf{e}_{|q_{k,k}|}}+x|]<\infty$. Integrating out the independent exponential time and using that $\xi^k$ is a L\'evy process implies that $\E[|\xi^k_1|]<\infty$ and $\E[|\xi^k_{\mathbf{e}_{|q_{k,k}|}}|]<\infty$ (compare the remark at the beginning of the proof and also note that $\E[|\xi^k_1|]<\infty$ if and only if $\E[|\xi^k_1+x|]<\infty$ for any $x\in\R$).

Similarly, we find that $\E[|\xi^j_1|]<\infty$ and
$\E[|\xi^j_{\mathbf{e}_{|q_{j,j}|}}|]<\infty$. Using the triangle inequality implies
\begin{align*}
\E[ |\Delta_{k,j} +  \Delta_{k,j}  |]
\leq \E[|(\Delta_{k,j} + \Delta_{j,k})+(\xi^{k}_{\mathbf{e}_{|q_{k,k}|}} + \xi^j_{\mathbf{e}_{|q_{j,j} |}} )  |]+ \E[|\xi^{k}_{\mathbf{e}_{|q_{k,k}|}}|]+\E[| \xi^j_{\mathbf{e}_{|q_{j,j} |}} |]
\end{align*}
and the right-hand side is finite by  \eqref{ell} and the above. Hence, by positivity of the transition jumps we obtain
\begin{align*}
\E[| \Delta_{j,k} |]\leq \E[| \Delta_{k,j} +  \Delta_{k,j}  |]<\infty\qquad \text{and}\qquad \E[| \Delta_{k,j} |]\leq \E[ |\Delta_{k,j} +  \Delta_{k,j}  |]<\infty.
\end{align*}
In total we proved that $ \Delta_{j,k}$, $\Delta_{k,j}$, $\xi^j_1$ and $\xi^k_1$ all have finite absolute mean which confirms (iii).\\ \smallskip

\underline{(iv) $\Leftrightarrow$ (i) $\Leftrightarrow$ (ii) $\Leftrightarrow$ (iii):}
First note that under $P^{0,k}$, $\sigma_1$ has finite first moment so that 
$$
\lim_{n\to\infty} \frac{\sigma_n-\sigma_0}n=\lim_{n\to\infty} \frac{\sum_{i=1}^n\sigma_i}{n}=E^{0,k}[\sigma_1].
$$
Assume that the limit $\lim_{t\to\infty} \xi_t/t$ exists almost surely. In this case the limit is equal to 
$$
\lim_{n\to\infty} \frac 1{n E^{0,k}[\sigma_1]} \sum_{l=1}^n(\xi_{\sigma_l}-\xi_{\sigma_{l-1}}).
$$
However, considering the case of strong laws of large numbers for random walks (cf. Theorem 7.2 of \cite{kyprianou}), the latter limit exists and is finite if and only if $E^{0,k}[|\xi_{\sigma_1}|]<\infty$, in which case the limit above must equal  $E^{0,k}[\xi_{\sigma_1}]/ E^{0,k}[\sigma_1]$.  It follows that (iv) implies (i)-(iii) and also that $\lim_{t\to\infty}\xi_t/t$ has the second claimed limit in \eqref{nn}.\\
  
  Conversely,  now assuming the equivalent statements (i), (ii) and (iii), in particular (ii), we can   conclude that
\begin{align}\label{mm}
\lim_{n\to\infty} \frac {\xi_{\sigma_n}}{\sigma_n} =E^{0,k}[\xi_{\sigma_1}]
\end{align}
almost surely by the strong law of large numbers for random walks. Next, we need that

  \begin{align}\label{lll}
  E^{0,k}\Big[\sup_{t\in[0,\sigma_1]} |\xi_t|\Big]<\infty
  \end{align}
which can be seen as follows: By the triangle inequality and (\ref{tired}), 
\[
\sup_{t\in[0,\sigma_1]} |\xi_t|\leq \sum_{i\in E}\sup_{t\in[0,\sigma_1]}|\xi^i_t| + \sum_{i\neq j}\sum_{\ell= 1}^{n_{i,j}(\sigma_1)}|\Delta_{i,j}^\ell|.
\] 
 The expectation of the right hand side is finite thanks to the independence of $\xi^i$, $i\in E$ and $J$, the assumption (iii) and the remark at the very beginning of this proof. 
Now we use \eqref{lll} to deduce
$$
\lim_{n\to\infty} \frac{\sup_{t\in[\sigma_{n-1},\sigma_n]}|\xi_t-\xi_{\sigma_{n-1}}|}{n}=0, 
$$
which then implies in combination with \eqref{mm} almost sure convergence of $\xi_t/t$ to a finite constant which is (iv).\\

It remains to verify \eqref{nn} under any of the equivalent conditions (i) to (iv). The first equality is the law of large numbers \eqref{mmm} under finite mean and the second equality was already derived in the argument for (iv) implies (i)-(iii).
\end{proof}

A.10. {\bf Tightness of the overshoots.}
We now characterise when a general MAP has tight overshoots. That it to say, taking account of the conclusion in Theorem \ref{MRT}, we provide necessary and sufficient conditions for $E^{0,\pi}[H^+_1]<\infty$, thereby giving a proof of Theorem \ref{over}. 

\bigskip

{\it Although we have assumed the non-lattice condition in this Appendix, the results given below do not need  it. }

\bigskip

\begin{theorem}\label{theo1509-1}
The MAP $(\xi,J)$ has tight overshoots if and only if $\xi_1$ has finite absolute moment and
\begin{itemize}\item[(i)]  $(\xi, J)$ drifts to $+\infty$; or
\item[(ii)] $(\xi,J)$ oscillates  and satisfies
 \begin{itemize}
 \item[\mbox{ }] $$\hspace{-4.5cm}\text{\rm\bf (TO)} \hspace{3cm}
 \int_{\kappa}^\infty \frac {x \,\Pi([x,\infty))}{1+\int_{0}^x \int_y^\infty \Pi((-\infty,-z])\, dz\, dy}\, dx<\infty$$
 \end{itemize}
  for one (any) $\kappa>0$ and 
  \begin{align}\label{ii}
  	\Pi:=\sum_{i\not=j, i,j\in E} q_{i,j} \mathcal L(\Delta_{i,j}) + \sum_{i\in E} \Pi_i,
  \end{align}	
  	  where $\Pi_i$ is the L\'evy measure of the $i$-th L\'evy process and $\mathcal L(\Delta_{i,j})$ is the probability distribution of the transition jump from $i$ to $j$ in Proposition \ref{p1}.

 \end{itemize}

\end{theorem}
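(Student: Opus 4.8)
The plan is to reduce the MAP statement to the corresponding statement for a random walk, and then to a L\'evy process, where the criterion is the classical one (Chapter~7 of \cite{kyprianou}). By Theorem~\ref{MRT} it suffices to show that $E^{0,\pi}[H^+_1]<\infty$ exactly when $\xi_1$ has finite absolute moment and either (i) or (ii) holds. Fix a state $k\in E$ and consider the skeleton random walk $S_n:=\xi_{\sigma_n}$, $n\geq 0$, under $P^{0,k}$, with step distribution $\rho:=\mathcal L_{0,k}(\xi_{\sigma_1}-\xi_{\sigma_0})$, and let $\zeta$ be the rate-one compound Poisson L\'evy process with jump distribution $\rho$. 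Then $\zeta$ has L\'evy measure $\rho$, and since $E[\zeta_1]=E^{0,k}[\xi_{\sigma_1}]$, the identity $E^{0,\pi}[\xi_1]=E^{0,k}[\xi_{\sigma_1}]/E^{0,k}[\sigma_1]$ from Theorem~\ref{theo1311-1} shows that $\zeta$ drifts to $+\infty$, oscillates, or drifts to $-\infty$ precisely when $(\xi,J)$ does.

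\emph{Step 1 (reduction to the skeleton).} I would first show that $(\xi,J)$ has tight overshoots if and only if $\zeta$ does. The equivalence between $\zeta$ and $S$ is immediate, since first passage of a fixed level by a compound Poisson process occurs at a jump and the overshoot is then a skeleton overshoot. For the equivalence between $(\xi,J)$ and $S$, the key input is the bound $E^{0,k}[\sup_{s\leq\sigma_1}|\xi_s|]<\infty$ established inside the proof of Theorem~\ref{theo1311-1}, together with $E^{0,k}[\sigma_1]<\infty$. Applying the strong Markov property at the $\sigma_n$'s: the MAP first crosses a level $a$ during an excursion $(\sigma_{m-1},\sigma_m]$ whose starting skeleton value lies below $a$, so the MAP overshoot is dominated by the oscillation of that excursion above its starting point; conversely the skeleton overshoot at the first $n$ with $S_n\geq a$ is dominated by the MAP overshoot plus the oscillation of a single ``completing'' excursion started from whatever state the MAP occupies when it crosses $a$. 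Since over any of the finitely many starting states these oscillations have uniformly bounded mean, adding or subtracting them preserves tightness. I expect this to be the main obstacle: one has to account for the possibility that the MAP crosses and re-crosses $a$ several times within an excursion before the skeleton first exceeds $a$, which is again handled by the strong Markov property and the uniform mean bound on excursion oscillations.

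\emph{Step 2 (the L\'evy case).} Apply the L\'evy-process result on stationary overshoots (Chapter~7 of \cite{kyprianou}) to $\zeta$: it has tight overshoots if and only if $\zeta_1$ has finite absolute moment and either $\zeta$ drifts to $+\infty$, or $\zeta$ oscillates and \textbf{(TO)} holds with $\Pi$ replaced by the L\'evy measure $\rho$ of $\zeta$ (and any $\kappa>0$).

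\emph{Step 3 (translating back).} It remains to identify these conditions for $\rho$ with the stated ones for $\Pi$. First, $E|\zeta_1|<\infty\iff E^{0,k}|\xi_{\sigma_1}|<\infty\iff \xi_1$ has finite absolute moment, by the equivalence of (i), (ii) and (iii) in Theorem~\ref{theo1311-1}, and the trichotomy for $\zeta$ matches that for $(\xi,J)$ as noted above. Second, \textbf{(TO)} for $\rho$ is equivalent to \textbf{(TO)} for $\Pi$: because $[0,\sigma_1]$ contains only a finite-mean number of underlying L\'evy-process pieces and transition jumps, a ``single big jump'' estimate (using $E^{0,k}[\sigma_1]<\infty$ and $E^{0,k}[\sup_{s\leq\sigma_1}|\xi_s|]<\infty$) yields, for large $x$, bounds of the form $c_1\,\Pi([x+t_0,\infty))\leq\rho([x,\infty))\leq c_2\,\Pi([x-t_0,\infty))+c_3e^{-c_4x}$ and the analogous bounds for the once-integrated negative tails. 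Since \textbf{(TO)} depends only on this tail class — a bounded shift of the argument changes numerator and denominator by bounded factors, and here one uses that $\int_{(-\infty,0)}|u|\,\Pi(du)<\infty$ under the finite-moment hypothesis so that the denominator of \textbf{(TO)} is comparable to $1+\int_0^x\int_y^\infty\rho((-\infty,-z])\,dz\,dy$, while an added exponentially small term is harmless — the two integral tests converge together. Combining Steps~1--3 gives the theorem, and the independence of \textbf{(TO)} from $\kappa$ is clear since a change of $\kappa$ alters the integral only by a finite quantity.
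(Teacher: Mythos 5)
Your overall strategy coincides with the paper's: reduce to the skeleton random walk $(\xi_{\sigma_n})$ at return times of $J$ to a fixed state (the paper's Lemma \ref{le0809-1}), invoke the random-walk/L\'evy criterion for tight overshoots (the paper cites Doney--Maller, Lemma \ref{le2210-1}; your compound Poisson detour is equivalent and harmless), and then translate \textbf{(TO)} from the step distribution $\rho=\mathcal L(\xi_{\sigma_1})$ to the measure $\Pi$. Steps 1 and 2 are essentially correct, with one caveat: in Step 1 you should not rely on $E^{0,k}[\sup_{s\leq\sigma_1}|\xi_s|]<\infty$, since that bound is only available once the finite-moment hypothesis is in force and Step 1 must also serve the ``only if'' direction; what the reduction actually needs is only \emph{tightness} of $\sup_{s\leq\sigma_1}|\xi_s|$ over the finitely many starting states, which holds unconditionally and is all the paper uses.

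The genuine gap is in Step 3. The claimed upper bound
$\rho([x,\infty))\leq c_2\,\Pi([x-t_0,\infty))+c_3e^{-c_4x}$
is false in general. The positive part of $\xi_{\sigma_1}$ contains a random number $N$ of independent heavy-tailed contributions (large jumps and transition jumps), and the tail of such a sum is dominated by a constant multiple of a \emph{shifted} single-jump tail only under subexponentiality-type assumptions, which are not made here; for non-O-subexponential $\Pi$ one has $\limsup_x \overline{\Pi^{*2}}(x)/\overline{\Pi}(x)=\infty$, so no additive shift $t_0$ and no exponentially small correction can repair the bound. The estimate that is actually available is multiplicative in scale: $\P(Z\geq x\mid N)\leq N\,\Pi^{\mathrm{max}}([x/N,\infty))$. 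This is why the paper's Lemma \ref{le0809-3} does not compare tails pointwise but instead substitutes $x\mapsto nx$ inside the \textbf{(TO)} integral, picking up a factor $n^{3}$ (one power from $N$ itself, two from rescaling the doubly integrated denominator), and this is precisely where the hypothesis $\E[N^{3}]<\infty$ enters; a matching lower bound $\P(Z\leq -z)\geq q\,\Pi^{\mathrm{max}}((-\infty,-z-\kappa])$ controls the denominator from below. Your lower bound $c_1\Pi([x+t_0,\infty))\leq\rho([x,\infty))$ and your treatment of the small-jump remainder (the paper's Lemma \ref{le0809-2}) are fine, but without replacing the additive-shift tail comparison by the dilation argument, the equivalence of \textbf{(TO)} for $\rho$ and for $\Pi$ is not established, and this equivalence is the heart of the theorem.
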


In order to prove the theorem it suffices to analyze tightness of the overshoots on the discrete   time skeleton embedded in $(\xi, J)$ at the return times of the Markov chain to a fixed state $k\in E$. As in A.9 we let $\sigma_0=\inf\{t\geq 0\colon J_t=k\}$ and inductively define, for $n\in\N$,
\begin{align*}
\sigma_{n+1}=\inf\{t> \sigma_{n} \colon J_t=k \text{ and }\exists s\in(\sigma_{n-1},t)\text{ with } J_s\not=k\}
\end{align*}
so that $(\xi_{\sigma_n})_{n\in\N_0}$ is a Markov chain.
\begin{lemma}\label{le0809-1}
The MAP $(\xi, J)$ has tight overshoots if and only if the Markov chain  $(\xi_{\sigma_n})_{n\in\N_0}$ has tight overshoots under~$P^{0,k}$.
\end{lemma}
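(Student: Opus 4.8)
The plan is to couple the first passage of the MAP over a level $a>0$ with the first passage of the embedded walk $(\xi_{\sigma_n})_{n\in\N_0}$ over $a$, using the i.i.d.\ decomposition of $(\xi,J)$ into cycles at the successive returns of $J$ to the fixed state $k$, and to show that the two overshoots differ by no more than the oscillation of $\xi$ over a single cycle. Tightness of one family of overshoots then forces tightness of the other.

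\emph{Degenerate cases and trichotomy.} First I would reduce to $E^{0,k}[|\xi_{\sigma_1}|]<\infty$. If this fails, then $E^{0,k}[|\xi_1|]=\infty$ by Theorem~\ref{theo1311-1}, and neither side can have tight overshoots: if $E[\xi_1^+]=\infty$ the ascending ladder height of each process has infinite mean, and if $E[\xi_1^+]<\infty$ while $E[\xi_1^-]=\infty$ both processes drift to $-\infty$, so mass of the overshoot escapes to $+\infty$. So assume $E^{0,k}[|\xi_{\sigma_1}|]<\infty$; then Theorem~\ref{theo1311-1}(iii), together with the moment remark recalled at the start of its proof, gives $E^{0,k}\bigl[\sup_{t\in[0,\sigma_1]}|\xi_t|\bigr]<\infty$. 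Moreover the identity \eqref{nn} shows $E^{0,\pi}[\xi_1]$ and $E^{0,k}[\xi_{\sigma_1}]$ have the same sign, so $(\xi,J)$ and the skeleton drift to $+\infty$, drift to $-\infty$, or oscillate together; when both drift to $-\infty$ the overshoots are non-tight on either side. Hence it remains to treat the cases where $(\xi,J)$, equivalently the skeleton, drifts to $+\infty$ or oscillates, so that $\tau^+_a<\infty$ and $N_a:=\inf\{n\ge1:\xi_{\sigma_n}\ge a\}<\infty$ a.s.\ for every $a>0$.

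\emph{The cycle coupling and a squeezing bound.} Under $P^{0,k}$ one has $\sigma_0=0$, and the cycles $\mathcal E_m=\{(\xi_{\sigma_{m-1}+s}-\xi_{\sigma_{m-1}},\,J_{\sigma_{m-1}+s}):0\le s\le\sigma_m-\sigma_{m-1}\}$, $m\ge1$, are i.i.d.; set $W_m=\sup_{0\le s\le\sigma_m-\sigma_{m-1}}|\xi_{\sigma_{m-1}+s}-\xi_{\sigma_{m-1}}|$, an i.i.d.\ sequence with $E[W_1]<\infty$ by the moment bound above. Fix $a>0$, let $M_a=\#\{n\ge1:\sigma_n<\tau^+_a\}$, so $\tau^+_a\in[\sigma_{M_a},\sigma_{M_a+1})$; away from the trivial event $\{\tau^+_a=\sigma_{M_a}\}$ (on which $N_a=M_a$ and the two overshoots agree) one has $\xi_{\sigma_m}<a$ for all $m\le M_a$, hence $M_a+1\le N_a$. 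From $\xi_{\tau^+_a}\le\xi_{\sigma_{M_a}}+W_{M_a+1}$ and $\xi_{\sigma_{N_a}}\le\xi_{\sigma_{N_a-1}}+W_{N_a}$, together with $\xi_{\sigma_{M_a}},\xi_{\sigma_{N_a-1}}<a$, I get
\[
0\le\xi_{\tau^+_a}-a\le W_{M_a+1},\qquad 0\le\xi_{\sigma_{N_a}}-a\le W_{N_a},
\]
while the fact that the crossing cycle must climb at least $a-\xi_{\sigma_{M_a}}$ (resp.\ $a-\xi_{\sigma_{N_a-1}}$) gives $W_{M_a+1}\ge a-\xi_{\sigma_{M_a}}=:D_a$ and $W_{N_a}\ge a-\xi_{\sigma_{N_a-1}}=:D_a'$. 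Thus each overshoot is squeezed between $0$ and the oscillation of $\xi$ over the one cycle in which the corresponding first passage occurs, and the lemma reduces to showing that $\{W_{M_a+1}\}_{a>0}$ and $\{W_{N_a}\}_{a>0}$ are tight. Conditioning on $\mathcal F_{\sigma_{M_a}}$: on the event that cycles $1,\dots,M_a$ all fail to reach $a$, the cycle $\mathcal E_{M_a+1}$ is an independent copy conditioned on $\{W_{M_a+1}\ge D_a\}$, so, averaging over $(M_a,D_a)$ and using $E[W_1]<\infty$, a routine truncation of $D_a$ shows that $\{W_{M_a+1}\}_{a>0}$ is tight \emph{as soon as} $\{D_a\}_{a>0}$ is, and symmetrically $\{W_{N_a}\}_{a>0}$ is tight as soon as $\{D_a'\}_{a>0}$ is.

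\emph{The crux.} It remains to feed in the hypotheses to obtain tightness of the deficits. In the direction ``skeleton tight $\Rightarrow$ MAP tight'', the skeleton is a random walk with finite ascending ladder mean, so by the ordinary renewal theorem the pair $(\text{deficit},\text{overshoot})$ of the walk at level $a$ converges weakly, giving tightness of $\{D_a'\}_{a>0}$; one then transfers this to $\{D_a\}_{a>0}$ (equivalently to $\{W_{M_a+1}\}$) by a renewal-type estimate bounding, uniformly in $a$, how far the running maximum of $(\xi,J)$ can lag behind the skeleton maximum over the cycles preceding $\tau^+_a$. In the direction ``MAP tight $\Rightarrow$ skeleton tight'', Theorem~\ref{MRT} gives $E^{0,\pi}[H^+_1]<\infty$ and, with it, tightness of the MAP deficit $a-\overline\xi_{\tau^+_a-}$ at level $a$; this must then be transferred to $\{D_a'\}_{a>0}$ through the cycle decomposition. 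I expect these transfer steps to be the genuinely delicate part: they amount to a uniform-in-$a$ renewal estimate showing that a large upward cycle excursion of $(\xi,J)$ cannot be ``undone'' within the (geometrically light-tailed) cycle length, so that the positive tail of the cycle oscillation $W_1$ is comparable to that of the cycle increment $\xi_{\sigma_1}$. Everything else — the cycle decomposition, the squeezing bounds, and the classical renewal facts for the random-walk skeleton and (via Theorem~\ref{MRT}) for the MAP — is routine.
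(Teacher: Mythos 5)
There is a genuine gap, and it sits exactly where you flag it: the ``crux'' transfer steps are not carried out, and as you have set the argument up they cannot be carried out without circularity. Your squeezing bound controls each overshoot by the oscillation $W$ of the \emph{crossing} cycle, which is size-biased (inspection paradox), and your conditioning argument then reduces tightness of $\{W_{N_a}\}_{a>0}$ to tightness of the skeleton deficits $\{D_a'\}_{a>0}$. But for a random walk, tightness of the deficit at level $a$ and tightness of the overshoot at level $a$ are equivalent (both hold precisely when the ascending ladder height has finite mean, and both families diverge to $+\infty$ in probability otherwise), so in the direction ``MAP tight $\Rightarrow$ skeleton tight'' you would be assuming the conclusion; the remaining escape route, deducing tightness of the skeleton deficit directly from tightness of the MAP overshoot ``through the cycle decomposition,'' is precisely the unproven step. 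The preliminary reduction to $E^{0,k}[|\xi_{\sigma_1}|]<\infty$ is likewise not free (it already invokes the ladder-height characterisation of Lemma \ref{le2210-1}) and turns out to be unnecessary.

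The paper's proof avoids all of this by comparing overshoots over \emph{shifted} levels rather than the same level. One has the event inclusion
\[
\{\xi_{\rho_x}-x\geq 3c\}\ \subset\ \{\xi_{\tau_{x+c}^+}-(x+c)\geq c\}\ \cup\ \Bigl\{ \sup_{s\in [\tau_{x+c}^+,\, \sigma(\tau_{x+c}^+)]} |\xi_s-\xi_{\tau_{x+c}^+}|\geq c\Bigr\},
\]
where $\rho_x$ is the first skeleton passage over $x$ and $\sigma(\tau^+_{x+c})$ is the first return of $J$ to $k$ after $\tau^+_{x+c}$: if the MAP overshoot over $x+c$ is below $c$ then $\xi_{\tau^+_{x+c}}\in[x+c,x+2c)$, and a skeleton overshoot of $3c$ over $x$ forces an oscillation of at least $c$ on the \emph{fresh} post-passage cycle $[\tau^+_{x+c},\sigma(\tau^+_{x+c})]$. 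By the strong Markov property at $\tau^+_{x+c}$ that oscillation has the law of $\sup_{s\leq\sigma_1}|\xi_s|$ under $P^{0,j}$ with $j=J_{\tau^+_{x+c}}$ --- a finite mixture of finitely many a.s.\ finite random variables, whose tail is therefore bounded by a single function $g_2(c)\to0$ uniformly in $x$, with no moment hypotheses and no renewal theory. The converse direction is symmetric. Since tightness is a statement uniform in the level, the shift from $x$ to $x+c$ costs nothing; this is the one idea your proposal is missing, and it removes both the size-biasing problem and the need for any deficit analysis.
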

\begin{proof}
For $x,s\geq 0$  consider the stopping times 
$$
\rho_x =\inf\{t: \xi_t\geq x, J_t=k, J_{t-}\not=k\} \text{ \ and \ } \sigma(s)=\inf\{t> s: J_t=k, J_{t-}\not=k\}.
$$
For $c\geq 0$ one has
$$
\{\xi_{\rho_x}-x\geq 3c\}\subset \{\xi_{\tau_{x+c}^+}-(x+c)\geq c\} \cup\Bigl\{ \sup_{s\in [\tau_{x+c}^+, \sigma(\tau_{x+c}^+)]} |\xi_s-\xi_{\tau_{x+c}^+}|\geq c\Bigr\}.
$$
Indeed, in the case where the overshoot of the discrete time process $(\xi_{\sigma_n})$ is larger than $3c$ and the overshoot of the continuous time process over $x+c$ is smaller  than $c$, one has $\xi_{\tau_{x+c}^+}\in[x+c,x+2c]$ so that the process has to oscillate between time $\tau_{x+c}^+$ and the next entry of $J$ into $k$ at time   $\sigma(\tau_{x+c}^+)$ by at least $c$. 
For every $i,j\in E$ and $x\geq 0$ one has 
$$ P^{0,i}\Bigl(\sup_{s\in [\tau_{x+c}^+, \sigma(\tau_{x+c}^+)]} |\xi_s-\xi_{\tau_{x+c}^+}|\geq c \,\Big |\,J_{\tau_{x+c}^+}=j\Bigr)= P^{0,j}\Bigl(\sup_{s\in [0,\sigma_1]} |\xi_s|\geq c\Bigr)
$$
and since finite families and mixtures thereof are always tight, there exists a decreasing function $g_2:[0,\infty)\to[0,1]$ with limit $0$ such that
$$
P^{0,i}\Bigl(\sup_{s\in [\tau_{x+c}^+, \sigma(\tau_{x+c}^+)]} |\xi_s-\xi_{\tau_{x+c}^+}|\geq c \Bigr)\leq g_2(c),\text{ \ \ for } i\in E,x,c\geq 0.
$$
If the continuous time process has tight overshoots, then there is a function $g_1:[0,\infty)\to[0,1]$ with limit $0$ such that
$$
P^{0,i}(\xi_{\tau_{x+c}^+}-(x+c)\geq c)\leq g_1(c), \text{ \ \ for } i\in E,x,c\geq 0,
$$
so that altogether
$$
\sup_{i\in E,x\geq 0} P^{0,i}(\xi_{\rho_x}-x\geq 3c)\leq g_1(c)+g_2(c)
$$
and the overshoots of the discrete time process are tight.  \smallskip

  The converse direction follows analogously. Using that
$$\{\xi_{\tau_x^+}-x\geq 2c\} \subset \{\xi_{\rho_x}-x\geq c\} \cup \Bigl\{ \sup_{s\in [\tau_{x}^+, \sigma(\tau_{x}^+)]} |\xi_s-\xi_{\tau_{x+c}^+}|\geq c\Bigr\}
$$
one deduces that the continuous time process has tight overshoots if the discrete time process has tight overshoots under any of the laws $P^{0,i}$. Further using that for $i\in E$ and $x,c\geq 0$ one has
$$
P^{0,i}( \xi_{\rho_x}-x\geq c) \leq P^{0,i}\Bigl( \sup_{s\in [0,\sigma(0)]} \xi_s\geq c\Bigr) + E^{0,i}\Bigl[ P^{\xi_{\sigma(0)}\wedge x,k} (\xi_{\rho_x}-x\geq c)\Bigr]
$$
one deduces that tightness of the overshoots of the discrete time process under the law $P^{0,k}$ induces tightness under any law $P^{0,i}$ with $i\in E$.
\end{proof}

The following lemma is a consequence of Theorem 8 of \cite{DonMal}.

\begin{lemma} \label{le2210-1}
A random walk has tight overshoots if and only if the distribution of its increments has finite absolute moment, it  drifts to infinity or oscillates and the distribution $\Pi$ of its increments satisfies the integrabililty condition (TO).
\end{lemma}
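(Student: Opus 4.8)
The plan is to obtain Lemma \ref{le2210-1} by combining a classical renewal‑theoretic reduction with the analytic criterion of Theorem 8 of \cite{DonMal}. Write $(S_n)_{n\ge 0}$ for the random walk with increment law $\Pi$, put $\tau^+_x=\inf\{n:S_n\ge x\}$, and let $(H^+_m)_{m\ge 0}$ denote its ascending ladder height process. First I would dispose of the direction requiring no analysis: if $(S_n)$ drifts to $-\infty$ then $M:=\sup_n S_n<\infty$ a.s., so $\mathbb{P}(\tau^+_x=\infty)=\mathbb{P}(M<x)\uparrow 1$ as $x\to\infty$, and whatever convention one adopts for the overshoot $S_{\tau^+_x}-x$ on $\{\tau^+_x=\infty\}$, the family $\{S_{\tau^+_x}-x:x>0\}$ fails to be tight. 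Hence tight overshoots force the walk to drift to $+\infty$ or to oscillate, and in either regime $\tau^+_x<\infty$ a.s. for every $x$ and $(H^+_m)$ is a genuine (non‑defective) renewal sequence on $[0,\infty)$.

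Second, in the drift‑to‑$+\infty$‑or‑oscillating regime I would use pure renewal theory to get that $\{S_{\tau^+_x}-x:x>0\}$ is tight if and only if $\mathbb{E}[H^+_1]<\infty$. Indeed $S_{\tau^+_x}-x$ is the forward recurrence time at level $x$ of the renewal process $(H^+_m)$; when $\mathbb{E}[H^+_1]<\infty$ these forward recurrence times form a tight family by the renewal theorem — converging weakly, in the non‑lattice case, to the size‑biased law $\mathbb{E}[H^+_1]^{-1}\,\mathbb{P}(H^+_1\ge x)\,dx$ — whereas when $\mathbb{E}[H^+_1]=\infty$ the forward recurrence time tends to $+\infty$ in probability as $x\to\infty$, so the family is not tight. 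I would also record, to handle case (i) elementarily, that drifting to $+\infty$ together with $\mathbb{E}|X_1|<\infty$ already forces $\mathbb{E}[H^+_1]<\infty$: a finite first moment with upward drift gives $\mathbb{E}[X_1]>0$ and makes the descending ladder epoch defective, so $\mathbb{E}[\tau^+_1]<\infty$ by a standard duality identity and $\mathbb{E}[H^+_1]=\mathbb{E}[X_1]\,\mathbb{E}[\tau^+_1]<\infty$ by Wald's identity.

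It then remains to characterise the finiteness of $\mathbb{E}[H^+_1]$, which is precisely Theorem 8 of \cite{DonMal}: for a random walk that does not drift to $-\infty$, $\mathbb{E}[H^+_1]<\infty$ if and only if $\mathbb{E}|X_1|<\infty$ and either the walk drifts to $+\infty$, or it oscillates and $\Pi$ satisfies the integrability condition \textbf{(TO)}. The remaining work is bookkeeping: one must match the tail functions used in \cite{DonMal} with those appearing in \textbf{(TO)} — the numerator $x\,\Pi([x,\infty))$ is $x$ times the upper tail of $\Pi$, and, after an application of Fubini, the denominator $1+\int_0^x\!\int_y^\infty \Pi((-\infty,-z])\,dz\,dy$ is $1$ plus the twice‑integrated lower tail of $\Pi$, a standard Erickson‑type truncated‑moment functional of the negative part of $X_1$ — and one must check that the running hypotheses of \cite{DonMal} cover exactly the oscillating regime left open after the first two paragraphs. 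Assembling the three steps proves the lemma.

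The only real obstacle is the reconciliation of conventions rather than any new idea: verifying that the ``does not drift to $-\infty$'' hypothesis, the chosen normalisation and strictness of the ladder heights, and the precise form of the balance integral in \cite{DonMal} translate without loss into the statement with $\Pi$ and \textbf{(TO)} written above. In particular, since this lemma (and Theorem \ref{theo1509-1}, which invokes it) must hold with no lattice assumption, the argument is deliberately phrased in terms of tightness of the overshoots rather than their weak convergence.
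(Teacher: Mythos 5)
Your proposal is correct and is essentially the paper's approach: the paper proves this lemma by nothing more than the remark that it is a consequence of Theorem 8 of \cite{DonMal}, and your argument rests on the same reference, merely making explicit the standard glue (the overshoot of the walk over level $x$ is the forward recurrence time at $x$ of the ladder-height renewal process, which is tight if and only if $\mathbb{E}[H^+_1]<\infty$, with Wald's identity disposing of the drift case) that the paper leaves to the reader. The only caveat is the bookkeeping you already flag, namely whether Theorem 8 of \cite{DonMal} is phrased directly as the overshoot-tightness criterion or as the criterion for finiteness of the mean ladder height; your reduction bridges the two formulations either way.
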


The next result will be helpful later to separate big jumps from small jumps in the L\'evy processes corresoponding through Proposition \ref{p1} to the MAP $(\xi,J)$.

\begin{lemma}\label{le0809-2}
Let $X,Y$ be real random variables with $Y$ being square integrable, then the distribution of $X$ satisfies (TO) if and only if the distribution of $X+Y$ satifies (TO).
\end{lemma}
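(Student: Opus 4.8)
The plan is to compare the two instances of condition (TO) directly, exploiting that $Y$ perturbs the tails of $X$ only by square-integrable amounts. For a real random variable $Z$ write $\overline{F}_Z(x) = \P(Z \ge x)$ and $\overline{F}^-_Z(x) = \P(Z \le -x)$ for $x \ge 0$, and set $D_Z(x) = 1 + \int_0^x \int_y^\infty \overline{F}^-_Z(z)\,dz\,dy$, so that $Z$ satisfies (TO) precisely when $\int_\kappa^\infty x\,\overline{F}_Z(x)/D_Z(x)\,dx < \infty$. Swapping $(X,Y)$ for $(X+Y,-Y)$ is legitimate since $X = (X+Y)+(-Y)$ and $-Y$ is again square integrable, so it is enough to prove that (TO) for $X$ implies (TO) for $X+Y$. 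First I would dispose of the degenerate case $\E[X^-] = \infty$: then $\E[(X+Y)^-] \ge \E[X^-] - \E|Y| = \infty$ as well, the inner integral $\int_y^\infty \overline{F}^-_X(z)\,dz = \E[X^-] - \int_0^y\overline{F}^-_X(z)\,dz$ equals $\infty$ for every $y$, and similarly for $X+Y$; hence $D_X \equiv D_{X+Y} \equiv \infty$ and both (TO)-integrands vanish identically. From now on one assumes $\E[X^-] < \infty$, so that also $\E[(X+Y)^-] < \infty$ and all integrals below are finite.

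The core of the argument consists of two elementary tail estimates together with the square-integrability of $Y$. Fix $\theta \in (0,1)$. A union bound gives $\overline{F}_{X+Y}(x) \le \overline{F}_X((1-\theta)x) + \P(Y \ge \theta x)$ for all $x > 0$; dually, from the inclusion $\{X \le -(1+\theta)z\}\setminus\{Y \ge \theta z\} \subseteq \{X+Y \le -z\}$ one gets $\overline{F}^-_{X+Y}(z) \ge \overline{F}^-_X((1+\theta)z) - \P(Y \ge \theta z)$ for all $z > 0$. Since $\int_0^\infty u\,\P(Y \ge u)\,du = \tfrac12\E[(Y^+)^2] < \infty$, a change of variables (and, for the double integral, Fubini) turns these into the facts that will absorb every error term, namely $\int_\kappa^\infty x\,\P(Y \ge \theta x)\,dx < \infty$ and $\int_0^\infty\int_y^\infty \P(Y \ge \theta z)\,dz\,dy = C_\theta := \E[(Y^+)^2]/(2\theta^2) < \infty$.

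It then remains to assemble the pieces. Integrating the lower bound on $\overline{F}^-_{X+Y}$ twice and using the identity $\int_0^x\int_y^\infty\overline{F}^-_X((1+\theta)z)\,dz\,dy = (1+\theta)^{-2}\bigl(D_X((1+\theta)x) - 1\bigr)$ yields the denominator comparison $D_{X+Y}(x) \ge (1+\theta)^{-2}D_X((1+\theta)x) - C_\theta$. Feeding this, together with the monotonicity of $D_X$ and the bound $D_X \ge 1$, into the estimate for $\overline{F}_{X+Y}$, one obtains a pointwise domination $x\,\overline{F}_{X+Y}(x)/D_{X+Y}(x) \le C'\,x\,\overline{F}_X((1-\theta)x)/D_X((1-\theta)x) + x\,\P(Y \ge \theta x)$, valid for all $x \ge \kappa$, with $C' = C'(\theta) < \infty$. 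Integrating over $[\kappa,\infty)$, changing variables $u = (1-\theta)x$ in the first term, and invoking (TO) for $X$ and the square-integrability of $Y$ for the second, gives $\int_\kappa^\infty x\,\overline{F}_{X+Y}(x)/D_{X+Y}(x)\,dx < \infty$, i.e. (TO) for $X+Y$; the reverse implication is the same argument with $(X+Y,-Y)$ in place of $(X,Y)$.

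The step requiring the most care — and the main obstacle — is the presence of the additive constant $C_\theta$ in the denominator comparison: if $\E[(X^-)^2] < \infty$ then $D_X$ is bounded and $C_\theta$ cannot simply be absorbed. The remedy is to split the range of integration according to whether $D_X((1+\theta)x) \ge M := 2(1+\theta)^2 C_\theta$ or not. On the set where it holds one has $C_\theta \le \tfrac12(1+\theta)^{-2}D_X((1+\theta)x)$, so $D_{X+Y}(x) \ge \tfrac12(1+\theta)^{-2}D_X((1+\theta)x)$ and hence $x\,\overline{F}_X((1-\theta)x)/D_{X+Y}(x) \le 2(1+\theta)^2\,x\,\overline{F}_X((1-\theta)x)/D_X((1-\theta)x)$; on the complementary set $D_X((1-\theta)x) \le D_X((1+\theta)x) < M$, so the trivial bound $D_{X+Y} \ge 1$ gives $x\,\overline{F}_X((1-\theta)x)/D_{X+Y}(x) \le M\,x\,\overline{F}_X((1-\theta)x)/D_X((1-\theta)x)$. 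In either case the integrand is dominated by a fixed multiple of $x\,\overline{F}_X((1-\theta)x)/D_X((1-\theta)x)$, which is exactly the quantity controlled by (TO) for $X$ after the change of variables. Apart from this bookkeeping, the Fubini steps and changes of variable are routine.
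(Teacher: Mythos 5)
Your proof is correct and follows essentially the same route as the paper's: the same reduction to one direction via $-Y$, the same two tail inequalities (union bound for the upper tail, set-difference for the lower tail), and absorption of all $Y$-contributions through $\E[(Y^+)^2]<\infty$. The only difference is bookkeeping: where you split the range of integration according to whether $D_X((1+\theta)x)$ dominates the constant $C_\theta$, the paper instead inflates the additive $1$ in the denominator to a constant $c>\E[Y_+^2]$ and uses the resulting slack; both are routine.
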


\begin{proof}
It suffices to show that $X+Y$ satisfies (TO), if $X$ satisfies (TO). For the reverse statement the same argument applies with the use of $-Y$ instead of $Y$. We use that for $z\geq0$
$$
\P(X+Y\geq z)\leq \P(X\geq z/2)+\P(Y\geq z/2) \text{ \ and \ } \P(X+Y\leq -z) \geq \P(X\leq -2z)- \P(Y\geq z)
$$
to deduce that
\begin{align*}&\quad\int_{\kappa}^\infty \frac {x \,\P(X+Y\geq x)}{1+\int_{0}^x \int_y^\infty \P(X+Y\leq -z)\, dz\, dy}\, dx\\
&\leq\int_{\kappa}^\infty \frac {x \,\P(X\geq x/2)}{1+\int_{0}^x \int_y^\infty (\P(X\leq -2z)- \P(Y\geq z))_+ \, dz\, dy}\, dx+ \int_{\kappa}^\infty x \,\P(Y\geq x/2)\, dx.
\end{align*}
The latter intergral is finite since $Y$ has finite second moment and the proof is finished once we showed that the former integral is finite. One has
$$
\int_{0}^\infty \int_y^\infty \P(Y\geq z) \, dz\, dy  =\frac12 \E[Y_+^2]<\infty
$$
and taking $c\geq 1$ with $c>\E[Y_+^2]$ we conclude with substitution that
\begin{align*}
&\quad\int_{\kappa}^\infty \frac {x \,\P(X\geq x/2)}{1+\int_{0}^x \int_y^\infty (\P(X\leq -2z)- \P(Y\geq z))_+ \, dz\, dy}\, dx\\
&\leq 4c \int_{\kappa/2}^\infty \frac {x \,\P(X\geq x)}{c+\int_{0}^{2x} \int_y^\infty (\P(X\leq -2z)- \P(Y\geq z))_+ \, dz\, dy}\, dx\\
&\leq 4c \int_{\kappa/2}^\infty \frac {x \,\P(X\geq x)}{c/2+\int_{0}^{2x} \int_y^\infty \P(X\leq -2z) \, dz\, dy}\, dx\\
&= 4c \int_{\kappa/2}^\infty \frac {x \,\P(X\geq x)}{c/2+\frac 14 \int_{0}^{4x} \int_y^\infty \P(X\leq -z) \, dz\, dy}\, dx\\
&\leq c' \int_{\kappa/2}^\infty \frac {x \,\P(X\geq x)}{1+ \int_{0}^{x} \int_y^\infty \P(X\leq -z) \, dz\, dy}\, dx<\infty,
\end{align*}
where $c'= \max\{8,16c\}$. 
\end{proof}

\begin{lemma}\label{le0809-3}
 Let $\Pi_i, i\in E,$ be probability distributions on $\R$ and let $\{X^{i,n}\colon i\in E, n\in \N\}$ be a family of independent random variables with $X^{i,n}\sim \Pi_i$. Define
$$
Z=\sum_{n=1}^N X^{Y_n,n}$$
with $(Y_n)_{n\in\N}$ being an $E$-valued process and $N$ an $\N_0$-valued random variable both being jointly independent of $(X^{i,n})$.
 If furthermore we suppose $\E[N^3] <\infty$ and $\P(i\in\{Y_1,\dots,Y_N\})>0$ for $i\in E$, then the following properties are equivalent: 
\begin{enumerate}
\item[(i)] The distribution of $Z$ satisfies (TO).
\item[(ii)] For one (any) sequence $(\rho_i)_{i\in E}$ of strictly positive numbers
\begin{align*}
	\Pi^\mathrm{sum}(\cdot):=\sum_{i\in E} \rho_i\,\Pi_i(\cdot)
\end{align*}
satisfies (TO).
\item[(iii)] The measure $\Pi^\mathrm{max}$ on $\R\backslash\{0\}$ defined by 
\begin{align*}
	\Pi^\mathrm{max}([t,\infty))&=\max_{i\in E} \Pi_i([t,\infty))\\
	\Pi^\mathrm{max}((-\infty,-t])&=\max_{i\in E} \Pi_i((-\infty,-t])
\end{align*}
for $t>0$ satisfies~(TO).
\end{enumerate}
\end{lemma}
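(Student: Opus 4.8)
The plan is to treat (ii)$\Leftrightarrow$(iii) by a crude tail comparison and then to prove (i)$\Leftrightarrow$(iii) via two-sided bounds on the tails of $Z$ in terms of those of $\Pi^{\mathrm{max}}$. For the first point, observe that for any strictly positive constants $(\rho_i)_{i\in E}$,
\[
\min_i\rho_i\cdot\Pi^{\mathrm{max}}([t,\infty))\ \le\ \Pi^{\mathrm{sum}}([t,\infty))\ \le\ |E|\Big(\sum_i\rho_i\Big)\Pi^{\mathrm{max}}([t,\infty)),\qquad t\ge 0,
\]
and likewise for the left tails, so $\Pi^{\mathrm{sum}}$ and $\Pi^{\mathrm{max}}$ have right and left tails that agree up to multiplicative constants (this already shows (ii) is independent of the choice of $(\rho_i)$). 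Since (TO) depends on $\Pi$ only through its tails, and since replacing the right tail and the left tail of a measure by comparable ones changes the numerator in (TO) only by a constant and the denominator $1+\int_0^x\int_y^\infty\Pi((-\infty,-z])\,dz\,dy$ only by a constant (after absorbing the factor into the $+1$, and trivially if the double integral is $+\infty$), the finiteness of the (TO)-integral is unaffected; by Theorem~\ref{theo1509-1} it is also insensitive to $\kappa$. This gives (ii)$\Leftrightarrow$(iii), and it remains to show (i)$\Leftrightarrow$(iii).

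\textbf{Tail bounds for $Z$.} Conditionally on $N$ and $(Y_n)_n$ the summands of $Z$ are independent; writing $m_i=\#\{n\le N:Y_n=i\}$ and $S_i$ for the conditionally independent partial sum of the $m_i$ summands drawn from $\Pi_i$, we have $Z\eqd\sum_{i\in E}S_i$ with $m_i\le N$ (hence $\E[m_i^3]\le\E[N^3]<\infty$) and $p_i:=\P(m_i\ge1)=\P(i\in\{Y_1,\dots,Y_N\})>0$ for every $i$. From $\{Z\ge x\}\subseteq\bigcup_i\{S_i\ge x/|E|\}$ and the inclusion $\{V_1+\dots+V_m\ge w\}\subseteq\bigcup_{j\le m}\{V_j\ge w/m\}$ one gets the upper bound
\[
\P(\pm Z\ge x)\ \le\ \sum_{i\in E}\sum_{m\ge1}\P(m_i=m)\,m\,\Pi^{\mathrm{max}}\big(\{\,\pm y\ge x/(m|E|)\,\}\big),\qquad x\ge 0,
\]
with the obvious meaning of $\Pi^{\mathrm{max}}(\{\pm y\ge t\})$ for the right/left tail. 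For the lower bound I would run a single big jump argument: on $\{m_i\ge1\}$ isolate the summand $X\sim\Pi_i$ indexed by $\nu_i:=\min\{n\le N:Y_n=i\}$; it is independent, with law $\Pi_i$, of the $\sigma$-algebra generated by $N$, $(Y_n)_n$ and all the remaining draws, with respect to which the remainder $R:=Z-X$ is measurable on $\{m_i\ge1\}$; since $R$ is a.s.\ finite there, one may pick $b_i\ge0$ with $\P(m_i\ge1,\ \pm R\ge-b_i)\ge p_i/2$, whence $\P(\pm Z\ge x)\ge\tfrac{p_i}{2}\,\Pi_i(\{\pm y\ge x+b_i\})$. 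Maximising over $i$ — here the hypothesis $p_i>0$ for \emph{all} $i\in E$ is used — and setting $b:=\max_i b_i$, $c_0:=\tfrac12\min_i p_i>0$ gives
\[
\P(\pm Z\ge x)\ \ge\ c_0\,\Pi^{\mathrm{max}}\big(\{\,\pm y\ge x+b\,\}\big),\qquad x\ge 0.
\]

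\textbf{Conclusion, and the main obstacle.} Write $\Phi(x)=\int_0^x\int_y^\infty\Pi^{\mathrm{max}}((-\infty,-z])\,dz\,dy$ and let $D_Z(x)$ be the same expression with $\P(Z\le-z)$ in place of $\Pi^{\mathrm{max}}((-\infty,-z])$. Plugging the upper bound into $D_Z$ and substituting $z\mapsto m|E|z$ (which scales the inner double integral by $(m|E|)^2$ and replaces $x$ by $x/(m|E|)\le x$) yields $D_Z(x)\le C\,(1+\Phi(x))$ with $C=C(|E|)\sum_i\E[m_i^3]<\infty$; plugging the lower bound in, and using that $\Phi$ is concave with $\Phi(0)=0$ (so $\Phi(tx)\le t\Phi(x)$ for $t\ge1$ and the shift $x\mapsto x+b$ is harmless for large $x$), yields $D_Z(x)\ge c\,(1+\Phi(x))$ for $x$ large. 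Feeding these two-sided estimates into the integral in (TO) for $Z$, together with $\Pi^{\mathrm{max}}(\{y\ge x+b\})\le\Pi^{\mathrm{max}}(\{y\ge x\})$ in the numerator and the rescaling $x\mapsto m|E|x$ (again tamed by concavity of $\Phi$, with $\sum_i\E[m_i^3]<\infty$ absorbing the weights), one concludes that (TO) holds for $Z$ if and only if it holds for $\Pi^{\mathrm{max}}$; the degenerate case where $\Pi^{\mathrm{max}}$ has infinite negative mean, equivalently $\Phi\equiv\infty$, is trivial, since then $D_Z\equiv\infty$ by the lower bound and both (TO)-integrals vanish. I expect the delicate step to be the single big jump lower bound — setting up the conditional independence exactly (freezing $N$, $(Y_n)_n$ and all $\Pi$-draws but one isolated $\Pi_i$-draw) and exploiting only the almost sure finiteness of $R$ — together with the lengthy but mechanical tracking of constants and argument-shifts through the nested integrals of (TO). It is also worth noting that $\E[N^3]<\infty$ is sharp for this argument: one power of $N$ is spent on the union bound over summands and two further powers on the weight $x\,\mathrm dx$ in (TO) after the rescaling $x\mapsto m|E|x$.
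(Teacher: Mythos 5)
Your proposal is correct and follows essentially the same route as the paper: the crude two-sided tail comparison for (ii)$\Leftrightarrow$(iii), an upper tail bound for $Z$ of the form $N\,\Pi^{\mathrm{max}}([x/N,\infty))$ via a union bound over the at most $N$ summands, a lower tail bound $q\,\Pi^{\mathrm{max}}(\{\pm y\ge x+\kappa\})$ obtained by freezing $N$, $(Y_n)$ and all but one isolated $\Pi_i$-draw, and finally a change of variables in the (TO) integral that consumes exactly $\E[N^3]$ (one power from the union bound, two from rescaling the double integral or the weight $x\,\dd x$). The only cosmetic differences are your extra grouping by states (the harmless factor $|E|$) and conditioning on the whole event $\{m_i\ge 1\}$ rather than on a fixed pair $(n_i',n_i)$ of positive probability as the paper does.
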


\begin{proof}
The equivalence of (ii) and (iii) follows immediately from the definition of (TO), the estimate 
$$
\min_{i\in E} \rho_i \, \Pi^\mathrm{max}([x,\infty))\leq \Pi^\mathrm{sum}([x,\infty))\leq \sum_{i\in E} \rho_i\, \Pi^\mathrm{max}([x,\infty)),\text{ \ \ for }x\geq0,
$$
and its analogues version for the set $(-\infty,-x]$. It remains to show that property (i) is equivalent to properties (ii) and (iii).
\smallskip

  We start with proving that (iii) implies (i). Note that, for $x\geq 0$,
\begin{align}\label{eq1509-1}
	\P(Z\geq x|N) \leq N\,\Pi^{\mathrm{max}}([x/N,\infty)).
\end{align}
Furthermore, for any $i\in E$ there exists $1\leq n_i'\leq n_i$ such that
$$
\P(Y_{n_i'}= i, N= n_i) >0 .
$$
Hence, for all $\kappa_i\in [0,\infty)$, one finds
$$
\P(Z\leq -z)\geq \P(Y_{n_i'}= i, N= n_i) \, \P\Bigl(\sum_{n=1}^{n_i} \1_{\{n\not= n_i'\}} X^{Y_n,n} \leq \kappa_i\,\Big|\,Y_{n_i'}= i, N= n_i\Bigr)\, \P(X^{i,1}\leq -z-\kappa_i).
$$
Now we fix $\kappa_i$ such that $$q_i:= \P(Y_{n_i'}= i, N= n_i) \, \P\Bigl(\sum_{n=1}^{n_i} \1_{\{n\not= n_i'\}} X^{Y_n,n} \leq \kappa_i\,\Big|\,Y_{n_i'}= i, N= n_i\Bigr)>0.$$
We set $\kappa=\max\{\kappa_i:i\in E\}$ and $q=\min\{q_i:i\in E\}$ and get, for $z\geq 0$,\begin{align}\label{eq1509-2}
\P(Z\leq -z)\geq q \, \max_{i\in E} \P(X^{i,1}\leq -z-\kappa) = q\, \Pi^{\mathrm{max}}((-\infty,-z-\kappa]).
\end{align}
Combining this  estimate with~(\ref{eq1509-1}) we get that
\begin{align*}
&\quad \int_{[\kappa,\infty)} \frac {x \,\P(Z\geq x)}{1+ \int_{0}^x\int_y^\infty \P(Z\leq -z)\, dz\, dy}\, dx\\
&\leq  \int_{[\kappa,\infty)} \frac {x \, \E[N \Pi^{\mathrm{max}}([x/N,\infty)]}{1+q\int_{\kappa}^x \int_y^\infty \Pi^{\mathrm{max}}((-\infty,-2z])\, dz\, dy}\, dx.
\end{align*}
Since $\int_0^\kappa\int_y^\infty \P(Z\leq -z)\,dz\,dy$ is finite we conclude that there exists a constant  $c>0$ such that for $x\geq \kappa$
$$
1+ q\int_{\kappa}^x \int_y^\infty \Pi^{\mathrm{max}}((-\infty,-2z])\, dz\, dy\geq c\Bigl(1+ \int_{0}^x \int_y^\infty \Pi^{\mathrm{max}}((-\infty,-2z])\, dz\, dy\Bigr).
$$
Hence, we have
\begin{align*}
&\quad \int_{[\kappa,\infty)} \frac {x \,\P(Z\geq x)}{1+\int_{0}^x \int_y^\infty \P(Z\leq -z)\, dz\, dy}\, dx\\
&\leq c^{-1} \sum_{n=1}^\infty \P(N=n) n \int_{(0,\infty)} \frac {x \, \Pi^{\mathrm{max}}([x/n,\infty)]}{1+\int_{0}^x \int_y^\infty \Pi^{\mathrm{max}}((-\infty,-2z])\, dz\, dy}\, dx\\
&= c^{-1} \sum_{n=1}^\infty \P(N=n) n^3 \int_{(0,\infty)} \frac {x \, \Pi^{\mathrm{max}}([x,\infty)]}{1+4^{{-1}}\int_{0}^{2nx} \int_y^\infty \Pi^{\mathrm{max}}((-\infty,-z])\, dz\, dy}\, dx\\
&\leq  4c^{-1} \E[N^3] \, \int_{(0,\infty)} \frac {x \, \Pi^{\mathrm{max}}([x,\infty)]}{1+\int_{0}^{x} \int_y^\infty \Pi^{\mathrm{max}}((-\infty,-z])\, dz\, dy}\, dx.
\end{align*}

Next, we consider the converse direction. In analogy to the derivation of~(\ref{eq1509-2}), one sees that there are constants $\kappa,q>0$ such that
$$
\P(Z\geq z)\geq q \, \Pi^{\mathrm{max}}([z+\kappa,\infty)),
$$
for all $z\geq 0$.
Further, it is also the case that
\begin{align*}
\int_0^x\int_y^\infty \P(Z\leq -z)\,dz\,dy&\leq \sum_{n=1}^\infty \P(N=n) \int_0^x\int_y^\infty n\, \Pi^{\mathrm{max}}((-\infty,-z/n])\,dz\,dy\\
&=  \sum_{n=1}^\infty \P(N=n) n^3\int_0^{x/n}\int_y^\infty  \Pi^{\mathrm{max}}((-\infty,-z])\,dz\,dy\\
&\leq \E[N^3]\, \int_0^{x}\int_y^\infty  \Pi^{\mathrm{max}}((-\infty,-z])\,dz\,dy,
\end{align*}
so that we have
\begin{align*}
&\quad\int_{[2\kappa,\infty)} \frac {x \,\Pi^{\mathrm{max}}([x,\infty))}{1+\int_{0}^x \int_y^\infty \Pi^{\mathrm{max}}((-\infty,-z])\, dz\, dy}\, dx\\
&\leq q^{-1}\, (\E[N^3]\vee1) \int_{[2\kappa,\infty)} \frac {x \,\P(Z\geq x/2)}{1+ \int_{0}^x \int_y^\infty \P(Z\leq -z)\, dz\, dy}\, dx\\
&= 4q^{-1}\, (\E[N^3]\vee 1)\int_{[\kappa,\infty)} \frac {x \,\P(Z\geq x)}{1+\int_{0}^x \int_y^\infty \P(Z\leq -z)\, dz\, dy}\, dx.
\end{align*}
The proof is now complete.
\end{proof}

\begin{proof}[Proof of Theorem \ref{theo1509-1}]
We again consider the process $\xi$ at the discrete set of return times to the state $k$.
By Lemma~\ref{le0809-1} tightness of the overshoots of the MAP is equivalent to tightness of the overshoots of the discrete time process $(\xi_{\sigma_n})_{n\in\N_0}$ under the law $P^{0,k}$ which is the underlying measure in the following considerations. By the Markov property and the translation invariance of the MAP, the process $(\xi_{\sigma_n})$ has iid increments  and starts in $0$ and thus is a random walk.
By Lemma~\ref{le2210-1}, $(\xi_{\sigma_n})$ has tight overshoots if and only if $\xi_{\sigma_1}$ has finite absolute moment and either
\begin{itemize}
	\item  $(\xi_{\sigma_n})$ drifts to infinity, or
	\item $(\xi_{\sigma_n})$ oscillates and the distribution of $\xi_{\sigma_1}$ satisfies (TO).
\end{itemize}
By Theorem~\ref{theo1311-1}, Formula \eqref{nn}, the latter properties are equivalent to the ones obtained when replacing the discrete time process $(\xi_{\sigma_n})$ by the continuous time process $(\xi_t)$ and keeping the (TO) property for $\xi_{\sigma_1}$.

To finish the proof it remains to show that in the oscillating case with finite absolute moment one has the equivalence \begin{align*}
	\mathcal L(\xi_{\sigma_1})\text{ satisfies (TO)}\quad\Longleftrightarrow\quad \Pi\text{ from \eqref{ii} satisfies (TO)}.
\end{align*}
In order to do so let us identify the distribution of $\xi_{\sigma_1}$. Enumerate the  times at which either $\xi$ has jumps with modulus larger than $1$ or $J$ changes its state in increasing order $0\leq \tau_1<\tau_2<\ldots$ and represent $\xi_{\sigma_1}$ as telescopic sum
\begin{align}\label{iiii}
\xi_{\sigma_1}= \sum_{j:\tau_j\leq \sigma_1} (\xi_{\tau_j}-\xi_{\tau_{j}-})+\sum_{j:\tau_j\leq \sigma_1}  (\xi_{\tau_j-}- \xi_{\tau_{j-1}})
\end{align}

with $\tau_0:=0$. Using the representation from Proposition~\ref{p1}, we can identify the conditional distributions of the terms appearing in the former sum  when conditioning on $J$ and the set of times $\{\tau_1,\tau_2,\dots\}$: If $\tau_j$ is triggered by a large jump of the L\'evy process (meaning that the process $J$ does not switch states at that time)   the conditional distribution of  $\xi_{\tau_j}-\xi_{\tau_{j}-}$ is the normalised L\'evy measure restricted to jumps larger than one of the L\'evy process that is switched on by the modulating chain. If $\tau_j$ is triggered by a change of $J$, then the conditional distribution of $\xi_{\tau_j}-\xi_{\tau_{j}-}$ is $\mathcal L(\Delta_{J(\tau_j-),J(\tau_j)})$ with $U$ as in Proposition~\ref{p1}. The random number of $j$'s with $\tau_j\leq \sigma_1$ has finite third moment and applying Lemma~\ref{le0809-3} we get that
\begin{align*}
\mathcal L\Big(\sum_{j:\tau_j\leq \sigma_1} (\xi_{\tau_j}-\xi_{\tau_{j}-})\Big)\text{ satisfies (TO)}\quad \Longleftrightarrow\quad
\sum_{i\not=j} q_{i,j} \,\mathcal L(\Delta_{i,j})+\sum_{i\in E}\Pi_i|_{B(0,1)^c}\text{ satisfies (TO)}.
\end{align*}
An elementary calculation furthermore shows that
\begin{align*}
\sum_{i\not=j} q_{i,j} \,\mathcal L(\Delta_{i,j})+\sum_{i\in E}\Pi_i|_{B(0,1)^c}\text{ satisfies (TO)}\quad \Longleftrightarrow\quad \Pi\text{ from \eqref{ii} satisfies (TO)}.
\end{align*}
Combining the two equivalences with \eqref{iiii} the theorem is proved (compare Lemma~\ref{le0809-2}) if the remainder $\sum_{j:\tau_j\leq \sigma_1} (\xi_{\tau_{j-}}-\xi_{\tau_{j-1}})$ has finite second moment.\\ 
However, the latter term is just the  value of a MAP starting in $(0,k)$ evaluated at the time of the first return of $J$ to $k$ with an appropriately modified evolution: the L\'evy measures need to be replaced by the old ones restricted to the unit ball and the process has no discontinuity when $J$  switches states. Such a MAP obviously has finite second moment.
\end{proof}


\begin{thebibliography}{39}
\providecommand{\natexlab}[1]{#1}
\providecommand{\url}[1]{\texttt{#1}}
\expandafter\ifx\csname urlstyle\endcsname\relax
  \providecommand{\doi}[1]{doi: #1}\else
  \providecommand{\doi}{doi: \begingroup \urlstyle{rm}\Url}\fi
\providecommand{\MR}[1]{MR#1}
\providecommand{\arxivref}[2]{#1\ifx&#2&\else\ [#2]\fi}


\bibitem{Alsmeyer} G. Alsmeyer: 
 \newblock  On the Markov renewal theorem.
\newblock {\it Stoch. Proc. Appl.}, pp. 37-56, (1994).


\bibitem{Asmussen} S. Asmussen:
 \newblock Applied Probability and Queues. 2nd Edition.
\newblock {\it Springer} (2003).

\bibitem{Bertoin}
J. Bertoin:
\newblock L\'evy Processes.
\newblock {\it Cambridge University Press} (1996).

\bibitem{BertoinSavov}
J. Bertoin and M. Savov:
\newblock Some applications of duality for L\'evy processes in a half-line.
\newblock {\it Bull. Lond. Math. Soc.}, pp. 97--110, (2011).

\bibitem{Blumenthal}
R.M. Blumenthal: 
\newblock On construction of Markov processes.
\newblock {\it Z. Wahrsch. verw. Gebiete}, pp. 433-444, (1983).



\bibitem{CD} 
L. Chaumont and R. A. Doney: 
\newblock On L\'evy processes conditioned to stay positive. 
\newblock {\it Electronic Journal of Probability}, pp. 948-961, (2005).

\bibitem{CPR}
L. Chaumont, H. Panti, V. Rivero:
\newblock The Lamperti representation of real-valued self-similar Markov processes.
\newblock {\it Bernoulli}, pp. 494-523, (2013).



\bibitem{ChaumontEtAl}
L. Chaumont, A. Kyprianou, J.C. Pardo and V. Rivero:
\newblock Fluctuation theory and exit systems for positive self-similar Markov processes.
\newblock  {\it Ann. Probab.}, pp. 245--279, (2012).


\bibitem{DMM}
C. Dellacherie, B. Maisonneuve, P.A. Meyer:
\newblock Probabilit\'es et potentiel: Processus de Markov (fin). Compl\'ements du Calcul Stochastique.
\newblock {\it Paris: Hermann}, (1992).

\bibitem{Leif}
L. D\"oring:
\newblock A Jump-Type SDE Approach to Real-Valued Self-Similar Markov Processes.
\newblock to appear in {\it Transactions of the AMS}

\bibitem{DonMal}
R.A.~Doney, R.A.~Maller: 
\newblock Stability and attraction to normality for {L}\'evy processes at zero and at infinity. 
\newblock {\it J. Theoret. Probab.}, pp. 751-792, (2002).



\bibitem{Fitzsimmons} 
P.J. Fitzsimmons:
\newblock On a connection between Kuznetsov processes and quasi-processes.
\newblock {\it Sem. Stoch. Proc.}, pp. 123-134, (1987).

\bibitem{FitzMais}
P.J. Fitzsimmons, B. Maisonneuve:
\newblock Excessive Measures and Markov Processes with Random Birth and Death.
\newblock {\it Probab. Th. Rel. Fields},  pp. 319-336, (1986).

\bibitem{Fiz}
P. Fitzsimmons:
\newblock On the existence of recurrent extensions of self-similar Markov processes.
\newblock {\it Electron. Comm. Probab.}, pp. 230-241, (2006).

\bibitem{FitzsimmonsMaisonneuve} 
P.J. Fitzsimmons, B. Maisonneuve:
\newblock Excessive measures and Markov processes with random birth and death.
\newblock {\it Z. Wahrsch. verw. Gebiete}, pp. 319-336, (1986).

\bibitem{F}
P.J. Fitzsimmons, M. Taksar:
\newblock Stationary regenerative sets and subordinators.
\newblock {\it Ann. Prob.}, pp. 1299-1305, (1988).

\bibitem{YJ}
M. Jacobson, M. Yor:
\newblock Multi-self-similar Markov processes on $\R^+$ and their Lamperti representations.
\newblock {\it Probab. Th. Rel. Fields},  pp. 1-28, (2003).



\bibitem{Kaspi}
 H. Kaspi:
\newblock On the Symmetric Wiener-Hopf Factorization for Markov Additive Processes. 
\newblock {\it Z. Wahrsch. verw. Gebiete}, pp. 179-196, (1982).


\bibitem{Kaspi}
H. Kaspi:
\newblock Random Time Changes for Processes with Random Birth and Death.
\newblock {\it Ann. Probab.}, pp. 586-599, (1988).



\bibitem{kesten}  H. Kesten:   
\newblock Renewal Theory for Functionals of a Markov Chain with General State Space.
\newblock {\it    Ann. Probab.}, pp. 355-386, (1974).


\bibitem{Kiu}
S. W. Kiu:
\newblock Semistable Markov processes in $\R^n$.
\newblock {\it Stochastic Process. Appl.}, pp. 183-191, (1980).


\bibitem{Kuznetsov} 
S.E. Kuznetsov:
\newblock Construction of Markov processes with random birth and death.
\newblock {\it Th. Prob. Appl.}, pp. 571-574, (1974).

\bibitem{Kyp}
A. Kuznetsov, A. E. Kyprianou, J.C. Pardo, A. Watson:
\newblock The hitting time of zero for a stable process.
\newblock {\it Electron. J. Probab.}, pp. 1-26, (2014).

\bibitem{kyprianou}
A. E.  Kyprianou:
\newblock Fluctuations of L\'evy processes with applications. Introductory Lectures. Second edition.
\newblock {\it Springer}, (2014).


\bibitem{lalley} S. P. Lalley:
\newblock Conditional Markov Renewal Theory I. Finite and Denumerable State Space.
\newblock {\it Ann. Probab.}, pp. 1113-1148, (1984).

\bibitem{L}
J.~Lamperti:
\newblock Semi-stable {M}arkov processes. {I}.
\newblock {\it Z. Wahrsch. verw. Gebiete}, pp.  205--225, (1972).

\bibitem{Mitro} 
J.B. Mitro:
\newblock Dual Markov processes: construction of a useful auxiliary process. 
\newblock {\it Z. Wahrsch. verw. Gebiete}, pp. 139-156, (1979).

\bibitem{Revuz}
D. Revuz:
\newblock Measures Associees aux Fonctionelles Additives de Markov. I.
\newblock {\it Trans. of the AMS}, pp. 501-531, (1970).

\bibitem{Rivero2}
V. Rivero:
\newblock Recurrent extensions of self-similar Markov processes and Cram\'er's condition. II.
\newblock  {\it Bernoulli}, pp. 1053--1070, (2007).


\bibitem{Sato} K-I Sato. 
\newblock L\'evy processes and infinitely divisible distributions.
\newblock {\it Cambridge University Press}, (1999)
\end{thebibliography}
\end{document}